\def\bfB{\mathbf{B}}
\def\bfC{\mathbf{C}}
\def\ad{\text{ad}}
\newcommand{\botoplus}{\overset{\bot}{\oplus}}
\newcommand{\WS}{\mathcal{W}\mathcal{S}}
\newcommand{\WA}{\mathcal{W}\mathcal{A}}
\renewcommand{\epsilon}{\varepsilon}
\newcommand{\Mat}{\operatorname{M}}
\newcommand{\Mats}{\operatorname{S}}
\newcommand{\Mata}{\operatorname{A}}
\newcommand{\GL}{\operatorname{GL}}
\newcommand{\Ker}{\operatorname{Ker}}
\newcommand{\SKer}{\operatorname{SKer}}
\newcommand{\Diag}{\operatorname{Diag}}
\newcommand{\modu}{\operatorname{mod}}
\newcommand{\End}{\operatorname{End}}
\newcommand{\NT}{\operatorname{NT}}
\newcommand{\Vect}{\operatorname{span}}
\newcommand{\im}{\operatorname{Im}}
\newcommand{\Rad}{\operatorname{Rad}}
\newcommand{\tr}{\operatorname{tr}}
\newcommand{\rk}{\operatorname{rk}}
\newcommand{\codim}{\operatorname{codim}}
\renewcommand{\setminus}{\smallsetminus}
\def\F{\mathbb{F}}
\def\calA{\mathcal{A}}
\def\calM{\mathcal{M}}
\def\calS{\mathcal{S}}
\def\calU{\mathcal{U}}
\def\calV{\mathcal{V}}
\def\calW{\mathcal{W}}
\def\calZ{\mathcal{Z}}
\def\lcro{\mathopen{[\![}}
\def\rcro{\mathclose{]\!]}}
\theoremstyle{definition}
\theoremstyle{plain}
\newtheorem{theo}{Theorem}[section]
\newtheorem{prop}[theo]{Proposition}
\newtheorem{cor}[theo]{Corollary}
\newtheorem{lemma}[theo]{Lemma}
\newtheorem{claim}{Claim}
\theoremstyle{plain}
\theoremstyle{remark}
\newtheorem{Rems}{Remarks}[section]
\newtheorem{Rem}[Rems]{Remark}
\newtheorem{ex}[Rems]{Example}
\title{The structured Gerstenhaber problem (III)}
\author{Cl\'ement de Seguins Pazzis\footnote{Universit\'e de Versailles Saint-Quentin-en-Yvelines, Laboratoire de Math\'ematiques
de Versailles, 45 avenue des Etats-Unis, 78035 Versailles cedex, France, dsp.prof@gmail.com}}
\begin{document}


\thispagestyle{plain}

\maketitle

\begin{abstract}
Let $b$ be a symmetric bilinear form on a finite-dimensional vector space over a field with characteristic $2$.
Here, we determine the greatest possible dimension of a linear subspace of nilpotent $b$-symmetric or $b$-alternating endomorphisms of $V$, expressing it as a function of the dimension, the rank, the Witt index of $b$, and an additional invariant in a very special case.
\end{abstract}

\vskip 2mm
\noindent
\emph{AMS Classification:} 15A30; 15A63, 15A03.
\vskip 2mm
\noindent
\emph{Keywords:} Symmetric matrices, Nilpotent matrices, Bilinear forms, Dimension, Gerstenhaber theorem, Fields with characteristic $2$.

\section{Introduction}

\subsection{The problem}

Throughout the article, $\F$ denotes an arbitrary field, $\overline{\F}$ denotes an algebraic closure of it, and we consider a
finite-dimensional vector space $V$ over $\F$, equipped with a bilinear form $b$
which is either symmetric, i.e.\ $\forall (x,y)\in V^2, \; b(x,y)=b(y,x)$,
or alternating, i.e.\ $\forall x \in V, \; b(x,x)=0$.
If $b$ is alternating, it is also skew-symmetric, i.e.\ $\forall (x,y)\in V^2, \; b(y,x)=-b(x,y)$, and in particular it is symmetric if
$\F$ has characteristic $2$. Conversely, every skew-symmetric bilinear form on $V$ is alternating if the characteristic of $\F$ is not $2$.
However, if $\F$ has characteristic
$2$ then on $V$ the skew-symmetric bilinear forms are the symmetric bilinear forms, and in general they differ from the alternating bilinear forms.

Given a subset $X$ of $V$, we denote by $X^{\bot_b}$ (or by $X^\bot$ if there is no possible confusion on the symmetric bilinear form $b$ under consideration) the set of all vectors $y \in V$ such that $x \underset{b}{\bot} y$, that is $b(x,y)=0$: it is a linear subspace of $V$. The radical of $b$ is defined as $\Rad(b):=V^\bot$. We say that $b$ is non-degenerate if $\Rad(b)=\{0\}$; in that case, $\dim X+\dim X^\bot=\dim V$
for every linear subspace $X$ of $V$. A linear subspace $X$ of $V$ is called \textbf{totally singular} when $X \subset X^\bot$, and the greatest
dimension of such a subspace is called the \textbf{Witt index} of $b$ and denoted by $\nu(b)$. A vector $x$ of $V$ is called $b$-isotropic whenever $b(x,x)=0$.
Finally, we say that $b$ is nonisotropic whenever its Witt index equals zero, i.e.\ $b(x,x)\neq 0$ for all $x \in V \setminus \{0\}$.

An endomorphism $u$ of $V$ is called $b$-symmetric (respectively, $b$-alternating)
whenever the bilinear form $(x,y) \in V^2 \mapsto b(x,u(y))$ is symmetric (respectively, alternating).
The set of all $b$-symmetric endomorphisms is denoted by $\calS_b$, and the set of all $b$-alternating
ones is denoted by $\calA_b$. Both sets are linear subspaces of the
space $\End(V)$ of all endomorphisms of $V$. Moreover, $\calA_b \subset \calS_b$ if $\F$ has characteristic $2$.

Finally, a subset of an $\F$-algebra is called \textbf{nilpotent} when all its elements are nilpotent.

\vskip 5mm
The standard Gerstenhaber problem consists of the following questions:
\begin{itemize}
\item What is the greatest dimension of a nilpotent (linear) subspace of $\End(V)$?
\item What are the nilpotent subspaces of $\End(V)$ with the greatest dimension?
\end{itemize}
These questions were raised by Gerstenhaber in \cite{Gerstenhaber}, and he answered them under a mild provision on the cardinality of the underlying field
(this provision was lifted later by Serezhkin \cite{Serezhkin}).
Below, we recall the answer to the first part of the problem:

\begin{theo}[Gerstenhaber (1958), Serezhkin (1985)]
Let $V$ be an $n$-dimensional vector space.
The greatest dimension of a nilpotent linear subspace of $V$ is $\frac{n(n-1)}{2}\cdot$
\end{theo}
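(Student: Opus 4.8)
The plan is to establish the two inequalities separately. For the lower bound, fix a basis $\calB$ of $V$; the set of all endomorphisms whose matrix in $\calB$ is strictly upper-triangular is a linear subspace of $\End(V)$ consisting of nilpotent endomorphisms, and its dimension equals $\frac{n(n-1)}{2}$. Hence the quantity we are after is at least $\frac{n(n-1)}{2}$.

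For the upper bound I would induct on $n=\dim V$, the cases $n\leq 1$ being trivial. Let $\calV\subseteq\End(V)$ be a nilpotent linear subspace. The first step is to dispose of the "reducible" cases. Nilpotency forces the action of every element of $\calV$ on a $\calV$-invariant line to be zero, and likewise on the quotient by a $\calV$-invariant hyperplane; thus $\calV$ has a common invariant line exactly when $\bigcap_{u\in\calV}\Ker u\neq\{0\}$, and a common invariant hyperplane exactly when $\sum_{u\in\calV}\im u\neq V$. If $\bigcap_{u\in\calV}\Ker u$ contains a nonzero vector $x$, then $\F x$ is $\calV$-invariant, the induced space $\overline{\calV}$ on $V/\F x$ is nilpotent, hence of dimension at most $\frac{(n-1)(n-2)}{2}$ by induction, while the kernel of $\calV\to\overline{\calV}$ consists of endomorphisms with image in $\F x$, which, being nilpotent, vanish at $x$ and therefore form a space of dimension at most $n-1$; summing yields $\dim\calV\leq\frac{n(n-1)}{2}$. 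The case $\sum_{u\in\calV}\im u\neq V$ is handled dually, by restricting to a $\calV$-invariant hyperplane $H$ and bounding the kernel of the restriction map $\calV\to\End(H)$, which consists of nilpotent endomorphisms of rank at most $1$ that vanish on $H$, again of dimension at most $n-1$.

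So the crux is the case where $\calV$ has neither a common invariant line nor a common invariant hyperplane --- equivalently, where $\calV$ is not triangularizable. (This case genuinely occurs, already inside $\End(\F^3)$, so it cannot be brushed aside.) Assume first that $\F$ is infinite. Pick $u_0\in\calV$ of maximal rank $r$; if $\calV=\{0\}$ we are done, so $1\leq r\leq n-1$. For every $u\in\calV$ and every $t\in\F$ the rank of $u_0+tu$ is at most $r$, and the standard rank estimate for matrix pencils over an infinite field then gives $u(\Ker u_0)\subseteq\im u_0$ for all $u\in\calV$. I would exploit this near-invariance of the pair $(\Ker u_0,\im u_0)$ --- splitting $V$ along a complement of $\Ker u_0$, writing the members of $\calV$ in the associated block form, bounding the off-diagonal contributions, and feeding the induction hypothesis through an appropriate nilpotent subspace of smaller dimension extracted from this decomposition. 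The delicate point --- and the main obstacle --- is to organize the bookkeeping so that all contributions add up to at most $\frac{n(n-1)}{2}$: crude Flanders-type rank bounds for the off-diagonal blocks overshoot that value, and one must feed back in the full strength of the nilpotency of the pencil $t\mapsto u_0+tu$, not merely the rank bound, to recover the missing dimensions. Finally, when $\F$ is finite the pencil estimate is no longer available --- the condition that $u_0+tu$ have rank at most $r$ for all $t\in\F$ is then far too weak, and nilpotency of $\calV$ need not survive an extension of scalars --- so a separate, more combinatorial argument in the vein of Serezhkin is required to reach the same bound.
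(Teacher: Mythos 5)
The paper does not prove this statement: it is quoted as the classical Gerstenhaber--Serezhkin theorem, with the proof delegated to the cited references. So your proposal has to stand on its own, and it does not: it has a genuine gap exactly where the theorem is hard. Your lower bound (strictly upper-triangular matrices) and your two reductions (a common invariant line when $\bigcap_{u\in\calV}\Ker u\neq\{0\}$, a common invariant hyperplane when $\sum_{u\in\calV}\im u\neq V$, each handled by induction plus an $(n-1)$-dimensional bound on the kernel of the induced map) are correct, but they are the routine part. The whole content of the theorem is the non-triangularizable case, which, as you rightly note, genuinely occurs. For that case you only record the pencil consequence $u(\Ker u_0)\subseteq\im u_0$ and then state that one ``would exploit'' the block decomposition, while conceding in the same breath that the crude rank bounds on the off-diagonal blocks overshoot $\frac{n(n-1)}{2}$ and that some unspecified stronger use of nilpotency is needed. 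That unspecified step is the theorem; Gerstenhaber's original argument, the trace-form/Jordan-block analysis of Mathes--Omladi\v{c}--Radjavi, and Serezhkin's combinatorial argument each exist precisely to supply it, and none of them falls out of the bookkeeping you describe.

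The finite-field case is a second, independent gap: you correctly observe that the pencil estimate fails and that nilpotency need not survive extension of scalars, but then simply defer to ``a separate, more combinatorial argument in the vein of Serezhkin'' without giving any of it. Deferring the hardest case of a theorem to the name of the person who proved it is not a proof. As it stands, your argument establishes the bound only for triangularizable nilpotent subspaces, for which it is immediate.
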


See also \cite{Mathes} for an alternative proof, \cite{dSPGerstenhaberskew} for an extension to division rings,
and \cite{Quinlan,dSPlargerank} and \cite{DraismaKraftKuttler,MeshulamRadwan} for various generalizations.

\vskip 3mm
The present article deals with the equivalent of the standard Gerstenhaber problem for spaces equipped with bilinear forms.
Let $V$ be a finite-dimensional vector space equipped with a symmetric or alternating bilinear form $b$.
In the \emph{structured} Gerstenhaber problem, we ask the following questions:

\begin{itemize}
\item \textbf{Dimension question}: What is the greatest dimension of a nilpotent subspace of $\calS_b$ (respectively, of $\calA_b$?).
\item \textbf{Optimal spaces question:} What are the nilpotent subspaces of $\calS_b$ (respectively, of $\calA_b$) with the greatest dimension?
\end{itemize}

The study of the structured Gerstenhaber problem was initiated by Meshulam and Radwan \cite{MeshulamRadwan}, who tackled
the very special case of a non-degenerate symmetric bilinear form over an algebraically closed fields of characteristic $0$: in that situation they
answered the first question. The second question was given an answer by Draisma, Kraft and Kuttler \cite{DraismaKraftKuttler} for algebraically closed fields of characteristic different from $2$, as a special case of a general result on nilpotent linear subspaces of Lie algebras.
Recently, Kokol Bukov\v{s}ek and Omladi\v{c} \cite{BukovsekOmladic} rediscovered Meshulam and Radwan's result on spaces of symmetric nilpotent complex matrices, and they furthered the existing knowledge by answering the optimal spaces question in that situation (that is, $b$ is a non-degenerate symmetric bilinear form on a complex vector space, and one considers nilpotent subspaces of $b$-symmetric endomorphisms).

It is only very recently that a systematic treatment of the structured Gerstenhaber problem has been undertaken.
In \cite{dSPStructured1}, the dimension question was entirely answered over an arbitrary field of characteristic different from $2$,
and the optimal spaces question was also given a clear answer in two cases (when $b$ is symmetric and one considers spaces of $b$-symmetric endomorphisms, and when $b$ is alternating and one considers spaces of $b$-alternating endomorphisms).
In \cite{dSPStructured2}, the remaining cases for the optimal spaces question (that is, when $b$ is symmetric and one considers spaces of $b$-alternating endomorphisms, and when $b$ is alternating and one considers spaces of $b$-symmetric endomorphisms)
were successfully answered under mild cardinality assumptions on the underlying field.
Hence, in the characteristic different from $2$ case, the structured Gerstenhaber problem can be considered as essentially solved, with the exception of
some remaining issues over finite fields for the optimal spaces question.

However, there remains the problem of fields with characteristic $2$: indeed the techniques that were used in \cite{dSPStructured1}
fail for those fields. The main aim of the present article is to answer the dimension question over any field with characteristic $2$. In a subsequent article, we will tackle the optimal spaces question.

Before we go on, it is important to stress that we will only deal with non-degenerate bilinear forms. Indeed, it is known that, in order
to solve the structured Gerstenhaber theorem for a bilinear form $b$ on a vector space $V$, it suffices to have a solution
of that problem for the non-degenerate bilinear form $\overline{b}$ on $V/\Rad(b)$ induced by $b$ (then, one may combine this solution with the one of
the standard Gerstenhaber problem on the space $\Rad(b)$). For details, see section 1.1 of \cite{dSPStructured1}.

\subsection{The main result}\label{Section1.2}

It is time to recall the known answer to the dimension question over a field with characteristic different from $2$:

\begin{theo}[de Seguins Pazzis, theorem 1.7 of \cite{dSPStructured1}]\label{theocarnot2}
Let $V$ be a vector space with finite dimension $n$ over a field with characteristic different from $2$. Let $b$ be a non-degenerate
symmetric or alternating bilinear form on $V$, whose Witt index we denote by $\nu$.
\begin{enumerate}[(a)]
\item The greatest dimension of a nilpotent subspace of $\calS_b$ is~$\nu(n-\nu)$.
\item The greatest dimension of a nilpotent subspace of $\calA_b$ is~$\nu(n-\nu-1)$.
\end{enumerate}
\end{theo}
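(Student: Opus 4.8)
To prove Theorem~\ref{theocarnot2} I would establish separately a lower bound (an explicit family of nilpotent subspaces) and an upper bound (valid for every nilpotent subspace), working throughout with a Witt decomposition of $V$ and, when necessary, distinguishing the cases where $b$ is symmetric or alternating and where the endomorphisms are $b$-symmetric or $b$-alternating.

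For the lower bound, fix a maximal totally singular subspace $N$ with $\dim N=\nu$, a totally singular complement $N'$ dually paired with $N$ by $b$, and set $W:=(N\oplus N')^{\bot}$, a nonisotropic space of dimension $n-2\nu$, so that $V=N\oplus W\oplus N'$. Refining a complete flag of $N$ and a complete flag of $W$, one obtains a complete flag $\calF$ of $V$ adapted to $b$; let $\calN$ be the set of all $u\in\calS_b$ (respectively $u\in\calA_b$) that strictly decrease $\calF$. Each such $u$ is nilpotent, $\calN$ is a linear subspace, and writing $u$ in block form with respect to $V=N\oplus W\oplus N'$ — in which $b$-symmetry (respectively $b$-alternation) forces the ``lower'' and ``upper'' blocks of $u$ to determine one another — a direct computation gives $\dim\calN=\nu(n-\nu)$ in case~(a) and $\dim\calN=\nu(n-\nu-1)$ in case~(b). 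In the split case $N^{\bot}=N$, this recovers the elementary observation that the strictly lower-triangular matrices which are symmetric (respectively skew) about the anti-diagonal form a nilpotent space of the stated dimension.

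For the upper bound I would induct on $n=\dim V$. If $\nu=0$ there is nothing to prove: if $u$ were a nonzero $b$-symmetric or $b$-alternating nilpotent endomorphism and $j$ were largest with $u^{j}\neq0$, then $u^{j}$ would again be $b$-symmetric or $b$-alternating and satisfy $(u^{j})^{2}=0$, so $\im(u^{j})$ would be totally singular, hence zero, a contradiction; thus $\calS_b$ and $\calA_b$ contain no nonzero nilpotent endomorphism. Assume now $\nu\geq1$ and let $\calV$ be a nilpotent subspace of $\calS_b$ (respectively $\calA_b$). Pick a nonzero isotropic vector $x$; the form $b$ induces a nondegenerate form $\overline b$ on $x^{\bot}/\F x$, of dimension $n-2$ and Witt index $\nu-1$. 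The subspace $\calV':=\{u\in\calV:\ u(x)\in\F x\}$ consists of endomorphisms with $u(x)=0$ (by nilpotence), each of which stabilizes $x^{\bot}$ and hence induces a nilpotent subspace $\overline{\calV}$ of $\calS_{\overline b}$ (respectively $\calA_{\overline b}$), to which the induction hypothesis applies. Since $\dim\calV=\dim\overline{\calV}+\dim\{u(x)+\F x:\ u\in\calV\}+\dim\{u\in\calV:\ u(x)=0,\ u(x^{\bot})\subseteq\F x\}$, and $\nu(n-\nu)-(\nu-1)(n-\nu-1)=n-1$ (respectively $\nu(n-\nu-1)-(\nu-1)(n-\nu-2)=n-2$), the whole matter reduces to proving $\dim\{u(x)+\F x:\ u\in\calV\}+\dim\{u\in\calV:\ u(x)=0,\ u(x^{\bot})\subseteq\F x\}\leq n-1$ (respectively $\leq n-2$). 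A priori each summand can be of size about $n$, so the point is that they cannot both be large: one must use $b$-symmetry (respectively $b$-alternation) to tie the way $u$ moves $x$ to the way $u$ acts on $x^{\bot}$, and this trade-off is the crux of the proof. I expect this bookkeeping to be the main obstacle, probably requiring care in the choice of $x$ (rather than an arbitrary isotropic vector) and a separate analysis according to whether $\calV$ fixes a common isotropic vector; the Gerstenhaber--Serezhkin theorem recalled above would enter as an auxiliary input, for instance to control the part of $\calV$ acting on a complement of $x^{\bot}$.

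Finally, the four cases are parallel but not identical: for $b$-alternating endomorphisms of a symmetric form and $b$-symmetric endomorphisms of an alternating form the relevant spaces are the Lie algebras $\mathfrak{so}(b)$ and $\mathfrak{sp}(b)$, where the inductive step is close in spirit to the Lie-theoretic argument of Draisma, Kraft and Kuttler, whereas the remaining two cases have to be handled by hand. I would organize the induction so that, inside the step, the nonsplit situations are reduced to the split ones by peeling off the nonisotropic part $W$ — on which, as noted above, a structured nilpotent endomorphism is forced to vanish — and so that the alternating-form statements follow from the symmetric-form ones by repeating the same block computations with ``symmetric'' replaced by ``skew'' throughout.
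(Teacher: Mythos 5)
Your proposal is not a proof but a correct identification of the proof's skeleton, and the one step you explicitly defer --- the inequality
\[
\dim\bigl(\{u(x)+\F x:\ u\in\calV\}\bigr)+\dim\bigl\{u\in\calV:\ u(x)=0,\ u(\{x\}^{\bot})\subseteq\F x\bigr\}\ \leq\ n-1\ \text{(resp.\ }n-2\text{)}
\]
--- is precisely where the entire content of the upper bound lives. Everything before it (the explicit Witt-decomposition construction for the lower bound, the base case $\nu=0$, the quotient $\{x\}^{\bot}/\F x$ of dimension $n-2$ and Witt index $\nu-1$, the two applications of rank--nullity, the arithmetic $\nu(n-\nu)-(\nu-1)(n-\nu-1)=n-1$) is routine and correct, and it is indeed the strategy of the cited source as well as of Sections 6--7 of the present paper in characteristic $2$. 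But ``one must use $b$-symmetry to tie the way $u$ moves $x$ to the way $u$ acts on $x^{\bot}$'' is a statement of the problem, not an argument, so as it stands the proposal has a genuine gap.

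The missing idea is an orthogonality mechanism coming from the trace form. First, the kernel you need to control is concrete: the $b$-symmetric (resp.\ $b$-alternating) endomorphisms vanishing at $x$ and mapping $\{x\}^{\bot}$ into $\F x$ are exactly the tensors $\alpha\,x\otimes_b x+x\wedge_b y$ with $y\in\{x\}^{\bot}$ (resp.\ the $x\wedge_b y$), as in Proposition \ref{caracsymmetrictensors} and Corollary \ref{caracalternatingtensors}; so that kernel is parametrized, up to the line $\F x$, by the space $L_{\calV,x}:=\{y:\ x\wedge_b y\in\calV\}$. Second --- and this is the step you omit --- the basic orthogonality lemma $\tr(u\circ v)=0$ for $u,v$ in a nilpotent space, applied to $u\in\calV$ and $v=x\wedge_b y\in\calV$, yields $\tr(u\circ v)=b(y,u(x))-b(x,u(y))=2\,b(y,u(x))$, hence (characteristic $\neq 2$!) $b(u(x),y)=0$. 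Thus $\calV x$ and $L_{\calV,x}$ are $b$-orthogonal subspaces, and non-degeneracy of $b$ gives $\dim(\calV x)+\dim L_{\calV,x}\leq n$; combining this with $x\in L_{\calV,x}$, with $\calV x\cap\F x=\{0\}$ (nilpotence), and in the alternating cases with $\calV x\subseteq\{x\}^{\bot}$, produces exactly the bounds $n-1$ and $n-2$ you need, with no case analysis on the choice of $x$ and no auxiliary appeal to Gerstenhaber--Serezhkin in the inductive step. It is worth noting that this trace computation is also the precise point where characteristic $2$ breaks the argument (the factor $2$ kills the conclusion), which is why the present paper has to replace plain $b$-orthogonality by the more delicate $b_a$-orthogonality of Lemmas \ref{tensorortho2} and \ref{atransformlemma}.
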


In order to state the corresponding result over fields with characteristic $2$, we need a few additional definitions.
From now on, we assume that $\F$ has characteristic $2$. Let $b$ be a symmetric bilinear form on $V$.
We denote by
$$Q_b : x \in V \mapsto b(x,x),$$
the corresponding quadratic form, and we simply denote it by $Q$ when there is no possible confusion.
Since $\F$ has characteristic $2$, we find that $Q$ is a group homomorphism from $(V,+)$ to $(\F,+)$,
in particular it has a kernel
$$\Ker Q :=\{x \in V : \; Q(x)=0\},$$
which is not only a subgroup of $(V,+)$ but also a linear subspace (because $Q(\lambda x)=\lambda^2 Q(x)$ for all $\lambda \in \F$ and $x \in V$).
The elements of $\Ker Q$ are called the \textbf{$b$-isotropic} vectors.
Then, we define
$$\SKer Q:=\Ker Q \cap (\Ker Q)^{\bot}=\{x \in V : \; Q(x)=0 \; \text{and} \; \forall y \in V, \; Q(y)=0 \Rightarrow b(x,y)=0\bigr\},$$
which is a linear subspace of $V$ (the notation $\SKer$ stands for ``super-kernel").

Now, we are ready to state our answer to the dimension question:

\begin{theo}[Main theorem]\label{maintheo}
Let $\F$ be a field of characteristic $2$.
Let $V$ be a finite-dimensional vector space over $\F$ and $b$ be a non-degenerate symmetric bilinear form
on $V$, with corresponding quadratic form denoted by $Q$. Denote by $\nu$ the Witt index of $b$, and set $n:=\dim V$.
\begin{enumerate}[(a)]
\item The greatest dimension of a nilpotent subspace of $\calS_b$ is $\nu (n-\nu)$.
\item If $n \neq 2\nu+1$ then the greatest dimension of a nilpotent subspace of $\calA_b$
is $\nu (n-\nu-1)$.
\item If $n=2\nu+1$ then the greatest dimension of a nilpotent subspace of $\calA_b$ is
$\nu (n-\nu)-\dim \SKer Q$.
\end{enumerate}
\end{theo}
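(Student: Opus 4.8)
The overall strategy is to establish matching upper and lower bounds for each of the three quantities, reducing the characteristic-$2$ case as far as possible to the techniques of \cite{dSPStructured1} and only introducing the invariant $\dim\SKer Q$ where the parity obstruction genuinely forces it. For the \emph{lower bounds}, I would produce explicit nilpotent subspaces: fix a maximal totally singular subspace $N$ of dimension $\nu$, decompose $V$ suitably with respect to $N$ and a complementary totally singular subspace (in the even case) or a $3$-dimensional nonisotropic piece (in the odd case). For part (a), the space of all $b$-symmetric $u$ with $u(V)\subset N$ and $u(N)=0$ is nilpotent of dimension $\nu(n-\nu)$; one checks $b$-symmetry is automatic since $b(x,u(y))$ lands in $N\times N$-pairings that vanish, and nilpotency follows from $u^2=0$. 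For parts (b) and (c), one restricts inside that space to the $b$-alternating condition; the key point is that the $b$-alternating constraint on the ``diagonal block'' $N^\bot/N \to N$ costs exactly $\nu$ dimensions when $n\neq 2\nu+1$, but when $n=2\nu+1$ the anisotropic line contributes a quadratic-form condition that, after accounting, removes precisely $\dim\SKer Q$ fewer (or more) dimensions — this is where the explicit combinatorics of $Q$ restricted to $\SKer Q$ must be done carefully.

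For the \emph{upper bounds}, the natural approach is induction on $\dim V$, following the architecture of the characteristic-different-from-$2$ proof but with the pairing-trick replacements needed in characteristic $2$. Given a nilpotent subspace $\calV\subset\calS_b$ (or $\subset\calA_b$), one looks at a common isotropic eigenvector or, more robustly, at the action on a minimal nonzero $\calV$-invariant subspace; since all elements are nilpotent, Gerstenhaber-type arguments (or the results cited as \cite{MeshulamRadwan,DraismaKraftKuttler}) give a common isotropic vector $x$ in $\bigcap_{u\in\calV}\ker u$ up to passing to a quotient. Then $x^\bot$ is $\calV$-invariant, $\F x\subset x^\bot$ because $x$ is isotropic, and one studies the induced nilpotent space on the non-degenerate form $\overline b$ on $x^\bot/\F x$, whose Witt index is $\nu-1$. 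The dimension bookkeeping: the ``new'' part of $\calV$ relative to the induced part is controlled by $\Hom(V/x^\bot, \F x)\oplus\Hom(x^\bot/\F x, \F x)$ intersected with the symmetry/alternating constraints, and summing the telescoping contributions $\sum (n-1-2k)$ recovers $\nu(n-\nu)$, resp.\ $\nu(n-\nu-1)$.

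The \textbf{main obstacle} is part (c), and more precisely the interaction of the $b$-alternating condition with the anisotropic line in odd dimension: in characteristic $2$ a $b$-alternating endomorphism $u$ must satisfy $b(x,u(x))=0$ for all $x$, which on the anisotropic quotient is a single quadratic (not linear) condition, and it is exactly the failure of this condition to be ``generically independent'' of the other constraints — measured by how $Q$ behaves on $\SKer Q$ — that shifts the answer by $\dim\SKer Q$. I expect the crux to be an auxiliary lemma describing $\SKer Q$ structurally (e.g.\ that $V$ admits an orthogonal-type decomposition in which $\SKer Q$ is visible, or that $\dim\SKer Q$ has a fixed parity relative to $n-2\nu$), and then a delicate rank count showing both that no nilpotent subspace of $\calA_b$ can beat $\nu(n-\nu)-\dim\SKer Q$ (the hard inequality, needing a refinement of the inductive step that tracks the quadratic constraint) and that this bound is attained. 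The even-dimensional cases and part (a) should go through with the inductive scheme essentially as in \cite{dSPStructured1}, the only care being to use totally singular subspaces and $\SKer Q$-free arguments so that the characteristic-$2$ pathologies do not leak in.
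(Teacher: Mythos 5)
There are genuine gaps on both sides of the argument. On the lower bound, your construction for part (a) does not work: if $N$ is totally singular of dimension $\nu$, the space of $b$-symmetric $u$ with $u(V)\subset N$ and $u(N)=0$ has dimension $\binom{\nu+1}{2}$, not $\nu(n-\nu)$. Indeed $b$-symmetry is \emph{not} automatic: taking $x\in N^\bot$ and $y\in V$ arbitrary, $b(x,u(y))=0$ forces $b(y,u(x))=0$ for all $y$, hence $u(x)=0$; so every such $u$ kills all of $N^\bot$ and is equivalent to a symmetric bilinear form on the $\nu$-dimensional space $V/N^\bot$. To reach $\nu(n-\nu)$ one must allow $u$ to act on $N$ itself as a strictly triangular operator and to map $N^\bot$ into $N$ (not to zero); this is exactly the three-block construction $\WS_S$ of Section \ref{basicexamplesection}, built from a \emph{normal basis}. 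Moreover, for part (c) the extremal space is not obtained by merely intersecting with the alternating condition: it requires the nontrivial Lemma \ref{extensionlemma}, which shows that bordering an invertible alternating block by an arbitrary column keeps nilpotency precisely when the inner block is alternating; your sketch does not contain this idea, and without it the bound $\nu(n-\nu)-\dim\SKer Q$ is not attained by any ``obvious'' modification of the even-dimensional example.

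On the upper bound, the proposed mechanism fails at its first step: a nilpotent linear subspace $\calV$ need not admit a common vector in $\bigcap_{u\in\calV}\Ker u$ (nilpotent spaces are not triangularizable in general), and neither Gerstenhaber's proof nor the cited references supply such a vector. The paper instead fixes an \emph{arbitrary} isotropic $x$ and splits $\dim\calV$ into three pieces: $\dim(\calV x)$, the dimension of $L_{\calV,x}=\{y: x\wedge_b y\in\calV\}$ (the fibre of the quotient map, identified via Proposition \ref{caracsymmetrictensors}), and the induced nilpotent space on $\{x\}^\bot/\F x$. The entire difficulty is then to bound $\dim(\calV x)+\dim L_{\calV,x}$ by roughly $n$ rather than the trivial $2n-2$; this is achieved by showing that $\calV x$ and $L_{\calV,x}$ are orthogonal for the \emph{a-transform} $b_a$ of $b$ (Lemma \ref{tensorortho2}, a delicate adjugate/determinant computation), combined with the rank analysis of $b_a$ (Lemma \ref{atransformlemma}), whose equality case is precisely what produces the $n=2\nu+1$ dichotomy and the containment of $(\Ker Q)^\bot$ needed to track $\dim\SKer Q$ through the induction. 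Your plan contains no substitute for this orthogonality, and the vague appeal to ``$\Hom(V/x^\bot,\F x)\oplus\Hom(x^\bot/\F x,\F x)$ intersected with the constraints'' cannot yield the required bound; likewise the description of part (c) as a ``single quadratic condition'' does not identify a workable mechanism. Finally, part (a) additionally needs a scalar-extension reduction to perfect fields and the structural Claims showing $\calM$ contains all bordered matrices, none of which is anticipated in the proposal.
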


Note that $\SKer Q$ is totally $b$-singular by its definition, and hence
$\dim \SKer Q \leq \nu$. It follows that
$$\nu (n-\nu-1) \leq \nu (n-\nu)-\dim \SKer Q \leq \nu (n-\nu).$$
Thus, if $n=2\nu+1$ and $\SKer Q$ has dimension $\nu$
then the bounds of (b) and (c) coincide and (b) applies with no condition on $n$ and $\nu$.
In general however, the case $n=2\nu+1$ of point (c) is special. At the other extreme, if $\SKer Q=\{0\}$, so that $b$
restricts to a non-degenerate form on $\Ker Q$, then the bound of (c) coincides with that of (a), and in this case the maximum
dimension of a nilpotent subspace of $\calS_b$ is attained by a subspace of $\calA_b$.

\begin{ex}
Assume that $\F$ has characteristic $2$.
Consider the standard scalar product
$$\bullet : (X,Y)\in (\F^n)^2 \mapsto X^T Y,$$
whose corresponding quadratic form is given by
$$Q : X \in \F^n \mapsto X^T X=(X\bullet E)^2$$
where $E \in \F^n$ is the vector with all entries equal to $1$.
Given $A \in \Mat_n(\F)$, we note that the endomorphism $X \mapsto AX$ of $\F^n$
is $\bullet$-symmetric (respectively, $\bullet$-alternating) if and only if $A$ is symmetric (respectively, symmetric with all diagonal entries zero).

Since $\bullet$ is non-degenerate, its Witt index $\nu$ satisfies $2\nu \leq n$.
Writing $p$ for $\lfloor n/2\rfloor$ and $f_i$ for $e_i+e_{i+p}$,  it is easily confirmed that the space
$\Vect(f_1,\dots,f_p)$ is totally $\bullet$-singular. Hence $\nu=p=\lfloor n/2\rfloor$.

In particular, if $n$ is even then $\nu=\frac{n}{2}$ and points (a) and (b) of Theorem \ref{maintheo} yield the following results:
\begin{itemize}
\item The greatest dimension of a nilpotent subspace of $n$ by $n$ symmetric matrices is $\frac{n^2}{4}\cdot$
\item The greatest dimension of a nilpotent subspace of $n$ by $n$ symmetric matrices with all diagonal entries zero is $\frac{n(n-2)}{4}\cdot$
\end{itemize}

Assume now that $n$ is odd. Then, $n=2\nu+1$.
Again, point (a) applies and it shows that the
greatest dimension of a nilpotent subspace of $n$ by $n$ symmetric matrices is $\frac{n^2-1}{4}$.
Here, we have $\Ker Q=\{E\}^\bot$ and $(\Ker Q)^\bot=\Vect(E)$, and since
$Q(E) \neq 0$ this yields $\SKer Q=\{0\}$. Hence, point (c) of Theorem \ref{maintheo} shows that $\frac{n^2-1}{4}$
is the greatest dimension of a nilpotent subspace of $n$ by $n$ symmetric matrices with all diagonal entries zero.
\end{ex}

The present article is entirely devoted to the proof of Theorem \ref{maintheo}, and it is organized as follows:
\begin{itemize}
\item In Section \ref{symformsreviewSection}, we quickly review the theory of symmetric bilinear forms over fields with characteristic $2$.
In particular, we study matrix representations, some results on the Witt index, and the notion of a normal basis.
\item In Section \ref{examplesSection}, we give examples of nilpotent subspaces that achieve the greatest possible dimension.
Such spaces are already known for the cases that correspond to points (a) and (b) in Theorem \ref{maintheo} (examples have already been
given in \cite{dSPStructured1}); here our main contribution lies in the special case when $n=2\nu+1$ and one considers $b$-alternating endomorphisms.
\item In Section \ref{toolsSection}, we collect various results on $b$-symmetric or $b$-alternating endomorphisms.
Two main objects of study are the $b$-symmetric and $b$-alternating endomorphisms of small rank, namely the $b$-symmetric squares and the
$b$-alternating tensors; and the so-called ``a-transform" of a symmetric bilinear form, a notion which appears to be new.
\item The remaining two sections are devoted to the proof of Theorem \ref{maintheo} \emph{per se}.
In Section \ref{m=0Section}, we tackle the special case when $\SKer Q=\Ker Q$: in that case the result is obtained as a relatively easy consequence of
Gerstenhaber's theorem. The remaining cases are dealt with in the last two sections:
 Section \ref{proofalternatingSection} for spaces of $b$-alternating endomorphisms, and
 Section \ref{proofsymmetricSection} for spaces of $b$-symmetric endomorphisms.
 In both cases, the proof is by induction on the dimension of the underlying vector space $V$. Key to this proof is the examination of elements of small rank in a nilpotent space of $b$-symmetric or $b$-alternating endomorphisms.
\end{itemize}

From now on, we always assume that $\F$ has characteristic $2$.

\section{A quick review of symmetric bilinear forms over fields with characteristic $2$}\label{symformsreviewSection}

\subsection{Matrix representations}

Given non-negative integers $n$ and $p$, we denote by $\Mat_{n,p}(\F)$ the vector space of all $n$ by $p$ matrices with entries
in $\F$, by $\Mat_n(\F)$ the algebra of all $n$ by $n$ matrices with entries in $\F$ (and by $I_n$ and $0_n$ its unity and zero element, respectively),
by $\NT_n(\F)$ its linear subspace consisting of all strictly upper-triangular matrices of $\Mat_n(\F)$ (whose dimension is $\dbinom{n}{2}$),
by $\Mats_n(\F)$ its linear subspace consisting of all symmetric matrices (whose dimension is $\dbinom{n+1}{2}$)
, and by $\Mata_n(\F)$ its linear subspace consisting of all alternating matrices (whose dimension is $\dbinom{n}{2}$).
We recall that a square matrix
$A \in \Mat_n(\F)$ is called \textbf{alternating} when the bilinear form $(X,Y) \mapsto X^T A Y$ on $(\F^n)^2$ is alternating, that is
$\forall X \in \F^n, \; X^TAX=0$. This means that $A$ is skew-symmetric with all diagonal entries equal to $0$.
Here, $\F$ has characteristic $2$ and hence alternating matrices are simply symmetric matrices with all diagonal entries $0$.

Let $b$ be a non-degenerate symmetric bilinear form whose attached quadratic form is denoted by $Q$
(possibly $Q=0$ if $b$ is alternating).

First of all, given a basis $(e_1,\dots,e_n)$ of $V$, we recall the definition of the matrix of $b$ with respect to $(e_1,\dots,e_n)$:
$$\Mat_{(e_1,\dots,e_n)}(b)=\bigl(b(e_i,e_j)\bigr)_{1 \leq i,j \leq n.}$$
This is a symmetric matrix, and it is alternating if and only if $b$ is an alternating form.
Note that $Q$ is easily computed from the matrix of $b$ with respect to $(e_1,\dots,e_n)$:
$$\forall (\lambda_1,\dots,\lambda_n)\in \F^n, \; Q\biggl(\sum_{k=1}^n \lambda_k\, e_k\biggr)=\sum_{k=1}^n \lambda_k^2\, b(e_k,e_k).$$
The additivity of $Q$ has already been noted in Section \ref{Section1.2}.

\subsection{On the Witt index}

In the proof of Theorem \ref{maintheo}, we will use several results on the Witt index of a symmetric bilinear form.
They are all based upon the following basic lemma (which works regardless of the characteristic of the field $\F$):

\begin{lemma}[Witt index additivity lemma]\label{Wittindexadd}
Let $b$ be a non-degenerate symmetric bilinear form on a finite-dimensional vector space $V$ over $\F$.
Consider a decomposition $V=V_1 \overset{\bot}{\oplus} V_2$ in which $\dim V_2=2$
and $V_2$ contains a non-zero isotropic vector. Denote by $b_1$ the restriction of $b$ to $V_1 \times V_1$.
Then, $\nu(b)=\nu(b_1)+1$.
\end{lemma}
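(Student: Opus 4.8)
The plan is to establish the two inequalities $\nu(b)\geq\nu(b_1)+1$ and $\nu(b)\leq\nu(b_1)+1$ separately. Throughout, I will use that, because $V=V_1\botoplus V_2$ is a $b$-orthogonal decomposition of the non-degenerate form $b$, both the restriction $b_1$ and the restriction $b_2:=b|_{V_2\times V_2}$ are non-degenerate: a vector in the radical of $b_1$ is orthogonal to $V_1$ and, trivially, to $V_2$, hence lies in $\Rad(b)=\{0\}$, and likewise for $b_2$. Thus $b_2$ is a non-degenerate symmetric bilinear form on a plane carrying a nonzero isotropic vector, so $\nu(b_2)=1$; this is the one special feature of $V_2$ that the argument exploits (note that Witt indices of orthogonal sums are not additive in general).

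The lower bound is immediate: picking a totally singular subspace $W_1\subseteq V_1$ with $\dim W_1=\nu(b_1)$ and a nonzero isotropic vector $x\in V_2$, the subspace $W_1\oplus\F x$ is totally singular in $V$ — the mixed terms $b(w,x)$ with $w\in V_1$ vanish by $b$-orthogonality of $V_1$ and $V_2$ — whence $\nu(b)\geq\nu(b_1)+1$.

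For the upper bound, let $W$ be an arbitrary totally singular subspace of $V$ and let $\pi_1\colon V\to V_1$ and $\pi_2\colon V\to V_2$ be the projections attached to the decomposition; put $W_1:=\pi_1(W)$ and $W_2:=\pi_2(W)$. If $\dim W_2\leq 1$, then $W\cap V_1=\Ker(\pi_2|_W)$ is a totally singular subspace of $V_1$ of dimension $\dim W-\dim W_2\geq\dim W-1$, so $\nu(b_1)\geq\dim W-1$. The real work is the case $W_2=V_2$. Here I consider the symmetric bilinear form $\beta_0$ on $W$ defined by $\beta_0(w,w'):=b\bigl(\pi_2(w),\pi_2(w')\bigr)$; since $b(w,w')=0$ and the decomposition is $b$-orthogonal, one also has $\beta_0(w,w')=-\,b\bigl(\pi_1(w),\pi_1(w')\bigr)$. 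Computing the radical of $\beta_0$ from the first formula (using $W_2=V_2$ and the non-degeneracy of $b_2$) yields $\Rad(\beta_0)=W\cap V_1$, while computing it from the second yields $\Rad(\beta_0)=\{w\in W:\pi_1(w)\in R\}$, where $R:=\Rad\bigl(b_1|_{W_1\times W_1}\bigr)$. Comparing these identities shows that $W\cap V_2=\{0\}$ and $R=W\cap V_1$, so $\pi_1$ and $\pi_2$ restrict to isomorphisms of $W$ onto $W_1$ and onto $V_2$; hence $\beta_0$ induces on $W_1/R\cong V_2$ a non-degenerate form isometric to a scalar multiple of $b_2$, and in particular $\dim R=\dim W-2$ and $W_1/R$ carries a nonzero isotropic vector. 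Lifting such a vector produces $\xi\in W_1\setminus R$ with $b(\xi,\xi)=0$, and then $R\oplus\F\xi$ is a totally singular subspace of $V_1$ of dimension $\dim W-1$. In either case $\nu(b_1)\geq\dim W-1$, so $\nu(b)\leq\nu(b_1)+1$, which completes the proof.

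The step I expect to be the main obstacle is the bookkeeping in the case $W_2=V_2$: extracting from the single identity $\beta_0(w,w')=b(\pi_2 w,\pi_2 w')=-b(\pi_1 w,\pi_1 w')$ both that $W$ meets $V_2$ trivially and that $R$ has dimension exactly $\dim W-2$, and then transporting the isotropic vector of $V_2$ back into $V_1$ through the identification $V_2\cong W_1/R$. Everything else is routine once the non-degeneracy of $b_1$ and $b_2$ is in hand, and nothing in the argument depends on the characteristic of $\F$.
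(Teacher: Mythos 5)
Your argument is correct, and for the upper bound it takes a genuinely different route from the paper's. Both proofs get the lower bound the same way and both begin the upper bound by examining how a totally singular subspace $W$ sits relative to the splitting $V=V_1\botoplus V_2$ (your case $\dim\pi_2(W)\leq 1$ is the easy half in either treatment). For the critical case, the paper argues by contradiction: assuming $\dim W>\nu(b_1)+1$, it shows $W\cap V_1$ must already be a maximal totally singular subspace of $V_1$ of codimension $2$ in $W$, deduces that the chosen isotropic vector $x\in V_2$ lies in $W$, and then derives $x\in\Rad(b)$, contradicting non-degeneracy. You instead give a direct construction: the auxiliary form $\beta_0(w,w')=b(\pi_2w,\pi_2w')=-b(\pi_1w,\pi_1w')$ on $W$, whose radical you compute in two ways to get $W\cap V_2=\{0\}$ and $\Rad(b_1|_{W_1\times W_1})=W\cap V_1$ of dimension $\dim W-2$, and then the isometry $W_1/R\cong V_2$ lets you transport the isotropic vector of $V_2$ into $V_1$ and exhibit a totally singular subspace $R\oplus\F\xi\subseteq V_1$ of dimension $\dim W-1$. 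I checked the delicate points: the two radical computations are right (non-degeneracy of $b_2$ for the first, the definition of $R$ for the second), the comparison does yield $W\cap V_2=\{0\}$ and $R=W\cap V_1$, the lift $\xi$ is well-defined because $R$ is the radical of $b_1|_{W_1}$, and $R\oplus\F\xi$ is indeed totally singular. Your version buys an explicit totally singular subspace of $V_1$ of dimension $\dim W-1$ attached to every maximal-projection $W$, rather than just a contradiction; the paper's version is shorter and needs only the one distinguished vector $x$. Both work in any characteristic, as you note.
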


\begin{proof}
We choose a non-zero isotropic vector $x$ in $V_2$.

Take a totally $b$-singular subspace $E$ of $V_1$ with dimension $\nu(b_1)$.
Then, $E \oplus \F x$ has dimension $\nu(b_1)+1$ and is totally $b$-singular, so $\nu(b) \geq \nu(b_1)+1$.

Conversely, suppose that there exists a totally $b$-singular subspace $U$ of $V$ whose dimension exceeds $\nu(b_1)+1$,
and set $U_1:=U \cap V_1$, which is totally $b$-singular. Then, $\dim(U_1) \leq \nu(b_1)$ and $\codim_U(U_1) \leq 2$.
We must have equality in both cases since $\dim U \geq \nu(b_1)+2$.

Since $\codim_U U_1=2$, the set $U \cap (V_1+z)$ is non-empty for each $z \in V_2$.
In particular, there exists $c \in V_1$ such that $c+x \in U$. Then, since $x$ is $b$-isotropic and $b$-orthogonal to $c$,
we obtain $b(c,c)=b(c+x,c+x)=0$. Moreover, for all $d \in U_1$ we find $b(c,d)=b(c+x,d)=0$.
Combining this with the fact that $U_1$ is totally $b$-singular, we gather that $U_1+\F c$ is a totally $b$-singular subspace of $V_1$.
Since $\dim U_1=\nu(b_1)$, it follows that $c \in U_1$ and hence $x \in U$. Then, for all $y \in V_2$,
we find some $d \in V_1$ such that $d+y \in U$, and we obtain $b(x,y)=b(x,d+y)=0$.
Hence $\{x\}^\bot$ includes $V_1$ and $V_2$, and we contradict the fact that $b$ is non-degenerate.
This contradiction completes the proof.
\end{proof}

By induction, we derive the following result:

\begin{cor}\label{Wittindexaddcor}
Let $b$ be a non-degenerate symmetric bilinear form on a finite-dimensional vector space $V$ over $\F$.
Consider a decomposition $V=W \overset{\bot}{\oplus} V_1 \overset{\bot}{\oplus} \cdots \overset{\bot}{\oplus} V_r$,
in which, for every $i \in \lcro 1,r\rcro$, $V_i$ has dimension $2$ and contains a non-zero isotropic vector.
Denote by $b_W$ the restriction of $b$ to $W^2$.
Then, $\nu(b)=\nu(b_W)+r$.
\end{cor}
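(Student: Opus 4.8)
The statement to prove is Corollary \ref{Wittindexaddcor}, which extends Lemma \ref{Wittindexadd} from one "hyperbolic-type" summand to $r$ of them, by induction on $r$.

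The plan is to proceed by induction on $r$. The base case $r=0$ is trivial: the decomposition reads $V=W$, so $b_W=b$ and $\nu(b)=\nu(b_W)$. For the inductive step, suppose the result holds for $r-1$ and consider a decomposition $V=W \botoplus V_1 \botoplus \cdots \botoplus V_r$ with each $V_i$ two-dimensional and containing a non-zero isotropic vector.

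First I would group the summands appropriately: set $V':=W \botoplus V_1 \botoplus \cdots \botoplus V_{r-1}$, so that $V=V' \botoplus V_r$ is an orthogonal direct sum in which $\dim V_r=2$ and $V_r$ contains a non-zero isotropic vector. Here one must note that $b$ restricts to a non-degenerate form on $V'$: indeed, the orthogonal decomposition $V=V'\botoplus V_r$ together with non-degeneracy of $b$ forces the restrictions of $b$ to $V'$ and to $V_r$ to both be non-degenerate (the radical of $b_{V'}$ would lie in $\Rad(b)=\{0\}$). Applying Lemma \ref{Wittindexadd} to the decomposition $V=V' \botoplus V_r$ then gives $\nu(b)=\nu(b_{V'})+1$, where $b_{V'}$ is the restriction of $b$ to $V'^2$.

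Next I would apply the induction hypothesis to $b_{V'}$, using the decomposition $V'=W \botoplus V_1 \botoplus \cdots \botoplus V_{r-1}$: since $b_{V'}$ is non-degenerate and each $V_i$ for $i \in \lcro 1,r-1\rcro$ still has dimension $2$ and contains a non-zero $b$-isotropic (hence $b_{V'}$-isotropic) vector, we get $\nu(b_{V'})=\nu(b_W)+(r-1)$, where $b_W$ is the restriction of $b$ (equivalently, of $b_{V'}$) to $W^2$. Combining the two displayed equalities yields $\nu(b)=\nu(b_W)+r$, completing the induction. There is no real obstacle here; the only point requiring a moment's care is the observation that the restriction of a non-degenerate form to an orthogonal summand is again non-degenerate, which is needed so that Lemma \ref{Wittindexadd} and the induction hypothesis genuinely apply.
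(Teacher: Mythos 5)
Your proof is correct and follows exactly the route the paper intends: the paper proves this corollary simply by noting it follows "by induction" from Lemma \ref{Wittindexadd}, and your write-up fills in precisely that induction, including the worthwhile observation that the restriction of $b$ to the orthogonal summand $V'$ remains non-degenerate so that both the lemma and the induction hypothesis apply.
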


\begin{cor}\label{quotientindexlemma}
Let $b$ be a non-degenerate symmetric bilinear form on a finite-dimensional vector space $V$ over $\F$.
Let $x \in V \setminus \{0\}$ be a non-zero $b$-isotropic vector. Consider the symmetric bilinear form $\overline{b}$
induced by $b$ on $\{x\}^\bot/\F x$. Then, $\nu(b)=\nu(\overline{b})+1$.
\end{cor}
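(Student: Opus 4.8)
The plan is to deduce the statement from the Witt index additivity lemma (Lemma~\ref{Wittindexadd}) by splitting off a plane containing $x$. Since $b$ is non-degenerate and $x\neq 0$, we may choose $y\in V$ with $b(x,y)=1$; then $x$ and $y$ are linearly independent (because $b(x,x)=0$ while $b(x,y)\neq 0$), and the plane $V_2:=\Vect(x,y)$ is non-degenerate since the determinant of its Gram matrix equals $b(x,x)b(y,y)-b(x,y)^2=-1\neq 0$. Hence $V=V_2\botoplus V_1$ with $V_1:=V_2^{\bot}$, and as $x$ is a non-zero isotropic vector lying in the $2$-dimensional space $V_2$, Lemma~\ref{Wittindexadd} gives $\nu(b)=\nu(b_1)+1$, where $b_1$ denotes the restriction of $b$ to $V_1\times V_1$.

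It remains to show that $\nu(b_1)=\nu(\overline{b})$, and the key observation for this is that $\{x\}^{\bot}=\F x\oplus V_1$. Indeed, $\F x$ lies in $\{x\}^{\bot}$ because $x$ is isotropic, while $V_1=\{x,y\}^{\bot}$ clearly lies in $\{x\}^{\bot}$; these two subspaces intersect trivially, since $\lambda x\in\{x,y\}^{\bot}$ forces $\lambda=\lambda\,b(x,y)=0$; and a dimension count gives $\dim(\F x+V_1)=1+(\dim V-2)=\dim V-1=\dim\{x\}^{\bot}$, whence equality. Consequently the canonical projection $\{x\}^{\bot}\to\{x\}^{\bot}/\F x$ restricts to a linear isomorphism $V_1\to\{x\}^{\bot}/\F x$, and because $\overline{b}$ is the form induced by $b$ (so that $\overline{b}(\overline{u},\overline{u'})=b(u,u')$ for $u,u'\in\{x\}^{\bot}$), this isomorphism carries $b_1$ onto $\overline{b}$. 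Being an isometry, it preserves the Witt index, so $\nu(b_1)=\nu(\overline{b})$, and combining the two identities yields $\nu(b)=\nu(\overline{b})+1$.

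There is no serious obstacle here; the one point worth flagging is the characteristic~$2$ subtlety that explains the route taken. In odd characteristic one would replace $y$ by an isotropic vector so as to make $V_2$ a genuine hyperbolic plane, but in characteristic~$2$ this normalization is unavailable in general (replacing $y$ by $y+\lambda x$ leaves $b(y,y)$ unchanged), which is precisely why the argument is channelled through the more flexible Lemma~\ref{Wittindexadd} rather than through the splitting-off of a hyperbolic plane. As an alternative one can argue by a direct double inequality: taking the preimage in $\{x\}^{\bot}$ of a maximal totally $b$-singular subspace of $\overline{b}$ produces a totally $b$-singular subspace of $V$ of dimension $\nu(\overline{b})+1$, giving $\nu(b)\geq\nu(\overline{b})+1$; and intersecting a maximal totally $b$-singular subspace $U$ of $V$ with $\{x\}^{\bot}$ and pushing the result into $\{x\}^{\bot}/\F x$ gives the reverse inequality, once one checks — using the maximality of $U$ to rule out $U+\F x$ being totally singular — that the intersection has codimension at most $1$ in $U$ and meets $\F x$ trivially (or contains $x$, in which case $U\subset\{x\}^{\bot}$ and one quotients directly).
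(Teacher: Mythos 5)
Your proof is correct and follows essentially the same route as the paper's: choose $y$ with $b(x,y)=1$, split off the non-degenerate plane $\Vect(x,y)$, apply Lemma~\ref{Wittindexadd}, and identify $\{x\}^{\bot}/\F x$ isometrically with the orthogonal complement of that plane. The additional remarks and the sketched alternative argument are sound but not needed.
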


\begin{proof}[Proof of Corollary \ref{quotientindexlemma}]
We choose $y \in V$ such that $b(x,y)=1$. Then $x,y$ are linearly independent. The bilinear form induced by $b$
on the $2$-dimensional space $P:=\Vect(x,y)$ is represented by the invertible matrix $\begin{bmatrix}
0 & 1 \\
1 & b(y,y)
\end{bmatrix}$, and hence it is non-degenerate. It follows that $V=P \botoplus P^\bot$.
Denote by $c$ the symmetric bilinear form induced by $b$ on $P^\bot$. The previous lemma shows that $\nu(b)=\nu(c)+1$.
Yet, $\F x \oplus P^\bot \subset \{x\}^\bot$, and since the dimensions are equal we obtain that $\F x \oplus P^\bot = \{x\}^\bot$.
Then, the canonical projection of $\{x\}^\bot$ on $\{x\}^\bot/\F x$ induces an isometry from $(P^\bot,c)$ to $(\{x\}^\bot/\F x,\overline{b})$,
which yields $\nu(\overline{b})=\nu(c)$. The claimed result ensues.
\end{proof}

\subsection{Normal bases}\label{normalbasisSection}

In this article, we shall make repeated use of the theory of symmetric bilinear forms over a field with characteristic $2$.
The following results are carefully explained in chapter XXXV of \cite{dSPinvitquad} and can also be
obtained with limited effort by using results from \cite{Milnor}. Before we go on, we warn the reader of a classical pitfall:
over fields with characteristic $2$, the theory of quadratic forms diverges from the one of symmetric bilinear forms.
In particular, the datum of $Q : x \mapsto b(x,x)$ is not sufficient to recover the symmetric bilinear form $b$, as
for every alternating bilinear form $c$ on $V$, the symmetric bilinear form $b+c$ also has $Q$ as its attached quadratic form!

Since $\F$ has characteristic $2$, every element $\alpha \in \overline{\F}$ has a unique square root in $\overline{\F}$,
and we denote this square root by $\sqrt{\alpha}$. Moreover, the mapping $\alpha \mapsto \sqrt{\alpha}$ is additive
since it is the inverse of the automorphism $\beta \mapsto \beta^2$ of the group $(\overline{\F},+)$.

Now, a \textbf{normal basis} for $b$ is a basis $(e_1,\dots,e_n)$ for which the matrix of $b$
has the form
$$\Diag(a_1,\dots,a_p) \oplus \begin{bmatrix}
0_q & I_q \\
I_q & \Diag(a_{p+1},\dots,a_{p+q})
\end{bmatrix} \oplus \begin{bmatrix}
0_m & I_m \\
I_m & 0_m
\end{bmatrix},$$
where $\sqrt{a_1},\dots,\sqrt{a_{p+q}}$ are linearly independent in the $\F$-vector space $\overline{\F}$,
and $p,q,m$ are non-negative integers such that $p+2q+2m=n$.
Asssume that we have such a basis. Then, many objects that are attached to $b$ can easily be computed.
First of all, we see that
$$\forall (\lambda_1,\dots,\lambda_n)\in \F^n, \; Q\Bigl(\sum_{k=1}^n \lambda_k.e_k\Bigr)=\sum_{k=1}^p \lambda_k^2\, a_k
+\sum_{k=1}^q \lambda_{p+q+k}^2\, a_{p+k}.$$
Since $\sqrt{a_1},\dots,\sqrt{a_{p+q}}$ are linearly independent in the $\F$-vector space $\overline{\F}$, it follows that
$$\Ker Q=\Vect(e_{p+1},\dots,e_{p+q},e_{p+2q+1},\dots,e_n).$$
Next, one deduces that $e_1,\dots,e_{p+q}$ all belong to $(\Ker Q)^\bot$, and since
$$\dim (\Ker Q)^\bot=n-\dim \Ker Q=n-(n-p-q)=p+q$$
and the vectors $e_1,\dots,e_{p+q}$ are linearly independent, it ensues that
$$(\Ker Q)^\bot=\Vect(e_1,\dots,e_{p+q}).$$
Hence
$$\SKer Q=(\Ker Q) \cap (\Ker Q)^\bot=\Vect(e_{p+1},\dots,e_{p+q}),$$
to the effect that
$$\dim (\SKer Q)=q.$$

Finally, Corollary \ref{Wittindexaddcor} applies to the decomposition
\begin{multline*}
V=\Vect(e_1,\dots,e_p) \botoplus \Vect(e_{p+1},e_{p+q+1}) \botoplus \cdots \botoplus \Vect(e_{p+q},e_{p+2q}) \\
 \botoplus \Vect(e_{p+2q+1},e_{p+2q+m+1}) \botoplus \cdots   \botoplus \Vect(e_{p+2q+m},e_{p+2q+2m})
\end{multline*}
and hence
$$\nu(b)=q+m+\nu(b_W)$$
where $W:=\Vect(e_1,\dots,e_p)$. Yet, since $\Ker Q=\Vect(e_{p+1},\dots,e_{p+q},e_{p+2q+1},\dots,e_n)$,
we see that $W$ contains no non-zero isotropic vector, which leads to $\nu(b_W)=0$.
We conclude that
$$\nu(b)=q+m.$$
In particular $n=2\nu(b)+1$ if and only if $p=1$.

We finish by recalling that $b$ always has a normal basis:

\begin{theo}[See \cite{dSPinvitquad} chapter XXXV Theorem 3.0.9]
Let $b$ be a non-degenerate symmetric bilinear form on a finite-dimensional vector space over a field $\F$
with characteristic $2$. Then, there exists a normal basis for $b$.
\end{theo}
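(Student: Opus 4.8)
The plan is to prove the existence of a normal basis by induction on $n:=\dim V$, the case $n=0$ being vacuous (take $p=q=m=0$). The central tool I would use is the map $\phi:V\to\overline{\F}$ defined by $\phi(x):=\sqrt{Q(x)}$: since $\alpha\mapsto\sqrt{\alpha}$ is additive on $\overline{\F}$ and $Q(\lambda x)=\lambda^2 Q(x)$, the map $\phi$ is $\F$-linear with kernel $\Ker Q$, and the computation recalled in Section \ref{normalbasisSection} shows that a basis is normal for $b$ exactly when its Gram matrix has the prescribed block shape \emph{and} the square roots of the scalars $a_1,\dots,a_{p+q}$ occurring in it form an $\F$-basis of $\phi(V)$. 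So each reduction step amounts to splitting off a small non-degenerate subspace and keeping track of how $\phi$ changes.

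Assuming $n\geq 1$, I would argue by three cases, tried in order. First, if some $x$ satisfies $Q(x)\neq 0$ and $\sqrt{Q(x)}\notin\phi(\{x\}^\bot)$, then $b(x,x)\neq 0$, so $V=\F x\botoplus\{x\}^\bot$; induction provides a normal basis of $\{x\}^\bot$, and since $\phi(V)=\F\sqrt{Q(x)}+\phi(\{x\}^\bot)$ with $\sqrt{Q(x)}\notin\phi(\{x\}^\bot)$, prepending $x$ to its diagonal part yields a normal basis of $V$. Second, if instead there exist non-zero isotropic vectors $x,y$ with $b(x,y)\neq 0$, I normalize $b(x,y)=1$; then $P:=\Vect(x,y)$ is non-degenerate with Gram matrix $\begin{bmatrix}0&1\\1&0\end{bmatrix}$, so $V=P\botoplus P^\bot$, and since $\phi$ vanishes on $P$ one has $\phi(P^\bot)=\phi(V)$, so a normal basis of $P^\bot$ obtained by induction, enlarged by the alternating block $(x,y)$, is a normal basis of $V$.

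Finally, suppose neither of the first two cases occurs. Then any two isotropic vectors are $b$-orthogonal, i.e.\ $\Ker Q$ is totally $b$-singular; in particular $Q\neq 0$ (otherwise $b$ would vanish on $V=\Ker Q$, against non-degeneracy), so I pick $x$ with $c:=Q(x)\neq 0$. As the first case failed, $\sqrt{c}=\phi(x)\in\phi(\{x\}^\bot)$, hence some $z\in\{x\}^\bot$ has $Q(z)=c$; then $w:=x+z$ satisfies $Q(w)=c+c=0$ and $b(w,x)=b(x,x)+b(z,x)=c$. Setting $x':=c^{-1}x$ and $a:=c^{-1}$, the plane $P:=\Vect(w,x')=\Vect(w,x)$ is two-dimensional (because $w$ is isotropic but $x$ is not), has Gram matrix $\begin{bmatrix}0&1\\1&a\end{bmatrix}$, hence is non-degenerate, and $V=P\botoplus P^\bot$. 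Now $\phi(P)=\F\sqrt{a}$, so $\phi(V)=\F\sqrt{a}+\phi(P^\bot)$, and the crucial point is that $\sqrt{a}\notin\phi(P^\bot)$: were there $v\in P^\bot$ with $Q(v)=a$, then $u:=x'+v$ would be isotropic (since $Q(x')=a=Q(v)$), and, since $w$ is isotropic too and $\Ker Q$ is totally $b$-singular, we would get $b(u,w)=0$; but $b(u,w)=b(x',w)+b(v,w)=1+0$, a contradiction. Granting this, a normal basis of $P^\bot$ (induction) together with $(w,x')$ as one extra ``mixed'' block yields a normal basis of $V$, after the routine relabeling that brings the block-diagonal Gram matrix into the exact normal form (permuting, within their respective groups, the $1\times 1$ diagonal entries, the mixed blocks, and the alternating blocks).

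The hard part is this last case: when $b$ neither splits off an anisotropic line whose square root is new nor contains a hyperbolic plane, one must both manufacture a mixed $2\times 2$ block and, above all, certify that its diagonal entry $a$ contributes a direction to $\phi(V)$ not already present in $\phi(P^\bot)$; that certification is exactly where the total $b$-singularity of $\Ker Q$, itself forced by the failure of the second case, comes in.
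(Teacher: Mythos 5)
Your proof is correct. Note that the paper itself gives no argument for this theorem: it is imported verbatim from \cite{dSPinvitquad} (see also \cite{Milnor}), so there is no internal proof to compare against. Your induction is essentially the classical one for splitting a symmetric inner product space in characteristic $2$ into anisotropic lines, ``mixed'' planes and hyperbolic planes, and the genuinely delicate point --- maintaining the linear independence of $\sqrt{a_1},\dots,\sqrt{a_{p+q}}$ over $\F$ --- is handled cleanly by your observation that $\phi=\sqrt{Q}$ is an $\F$-linear map $V\to\overline{\F}$, so that normality is equivalent to the block shape plus ``the $\sqrt{a_i}$ form a basis of $\phi(V)$''; each of your three splittings visibly preserves this invariant. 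The case analysis is exhaustive by construction, and the key certification in Case 3 (that $\sqrt{a}\notin\phi(P^\bot)$, via the total singularity of $\Ker Q$ forced by the failure of Case 2) is exactly the right argument. Two micro-details you may want to make explicit when writing this up: $w=x+z\neq 0$ because $x\notin\{x\}^\bot$ (as $b(x,x)=c\neq 0$) while $z\in\{x\}^\bot$; and in Case 2 the hypothesis $b(x,y)\neq 0$ already forces $x,y$ to be linearly independent since $y\in\F x$ would give $b(x,y)=\lambda Q(x)=0$. Neither affects the validity of the proof.
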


\section{Examples of spaces with maximal dimension}\label{examplesSection}

\subsection{The matrix viewpoint}

Let $b$ be a symmetric bilinear form on a finite-dimensional vector space $V$.
Let $\bfB$ be a basis of $V$. Set $S:=\Mat_\bfB(b)$.

Given matrices $A \in \Mat_n(\F)$ and $M \in \Mat_n(\F)$, we say that
$M$ is \textbf{$A$-symmetric} (respectively, $A$-alternating) whenever $AM$ is symmetric (respectively, alternating).
We note by $\calS_A$ (respectively, by $\calA_A$) the vector space of all $A$-symmetric (respectively, $A$-alternating) matrices of $\Mat_n(\F)$.

Let $u \in \End(V)$, whose matrix in $\bfB$ is denoted by $\Mat_\bfB(u)$. Then, the
bilinear form $c : (x,y) \mapsto b(x,u(y))$ satisfies
$$\Mat_\bfB(c)=S \Mat_\bfB(u).$$
Hence, $u$ is $b$-symmetric (respectively, $b$-alternating) if and only if $\Mat_\bfB(u)$
is $S$-symmetric (respectively, $S$-alternating).

Hence,
$$u \in \calS_b \mapsto \Mat_\bfB(u) \in \calS_S \quad \text{and}
\quad u \in \calA_b \mapsto \Mat_\bfB(u) \in \calA_S$$
are nilpotency-preserving vector space isomorphisms.

\subsection{A general construction}\label{basicexamplesection}

Here, we assume that $b$ is non-degenerate.
Let us choose a normal basis $\bfB=(e_1,\dots,e_n)$ of $b$, so that
$$\Mat_\bfB(b)=\Diag(a_1,\dots,a_p) \oplus \begin{bmatrix}
0_q & I_q \\
I_q & \Diag(a_{p+1},\dots,a_{p+q})
\end{bmatrix} \oplus \begin{bmatrix}
0_m & I_m \\
I_m & 0_m
\end{bmatrix},$$
where $\sqrt{a_1},\dots,\sqrt{a_{p+q}}$ are linearly independent in the $\F$-vector space $\overline{\F}$,
and $p,q,m$ are non-negative integers such that $p+2q+2m=n$.
Denoting by $\nu$ the Witt index of $b$, we have
$\nu=q+m$ (see Section \ref{normalbasisSection}).

We consider the permuted basis
\begin{multline*}
\bfC:=(e_{p+1},\dots,e_{p+q},e_{p+2q+1},\dots,e_{p+2q+m},e_1,\dots,e_p,e_{p+q+1},\dots,e_{p+2q},\\
e_{p+2q+m+1},\dots,e_{p+2q+2m}).
\end{multline*}
With respect to $\bfC$, the matrix of $b$ equals
$$S:=\begin{bmatrix}
0_\nu & [0]_{\nu \times p} & I_\nu \\
[0]_{p \times \nu} & \Delta & [0]_{p \times \nu} \\
I_\nu & [0]_{\nu \times p} & \Theta
\end{bmatrix}$$
where $\Delta$ and $\Theta$ are diagonal matrices, with respective diagonal entries $a_1,\dots,a_p$ and $a_{p+1},\dots,a_{p+q},0,\dots,0$.
Since $\sqrt{a_1},\dots,\sqrt{a_p}$ are linearly independent over $\F$, the symmetric bilinear form
$(X,Y) \mapsto X^T\Delta Y$ is non-isotropic.

Let us consider a matrix $M \in \Mat_n(\F)$ of the following form:
$$M=\begin{bmatrix}
A & B & C \\
[0]_{p \times \nu} & D & E \\
0_\nu & F & G
\end{bmatrix},$$
i.e.\ $M$ represents, with respect to the basis $\bfC$, an endomorphism $u$ of $V$ that leaves the subspace~$W:=\Vect(e_{p+1},\dots,e_{p+q},e_{p+2q+1},\dots,e_{p+2q+m})$
invariant.
We compute that
$$SM=\begin{bmatrix}
0_\nu & F & G \\
[0]_{p \times \nu} & \Delta D & \Delta E \\
A & B+\Theta F & C+\Theta G
\end{bmatrix},$$
and hence $M$ is $S$-symmetric if and only if
$$F=0, \; G=A^T, \; B=(\Delta E)^T, \; D \; \text{is $\Delta$-symmetric} \quad \text{and} \quad
C+\Theta A^T \; \text{is symmetric}.$$
Likewise, $M$ is $S$-alternating if and only if
$$F=0, \; G=A^T, \; B=(\Delta E)^T, \; D \; \text{is $\Delta$-alternating} \quad \text{and} \quad
C+\Theta A^T \; \text{is alternating}.$$

Let us define $\WS_S$ (respectively $\WA_S$) as the vector space of all matrices of the form
$$\begin{bmatrix}
A & (\Delta E)^T & \Theta A^T+J \\
[0]_{p \times \nu} & 0_p & E \\
0_\nu & [0]_{\nu \times p} & A^T
\end{bmatrix}$$
where $A \in \NT_\nu(\F)$, $E \in \Mat_{p,\nu}(\F)$ and $J \in \Mats_\nu(\F)$
(respectively, $J \in \Mata_\nu(\F)$). Obviously,
$$\dim \WS_S=\dbinom{\nu}{2}+p\nu +\dbinom{\nu+1}{2}=\nu^2+p \nu=\nu(\nu+p)=\nu(n-\nu)$$
and
$$\dim \WA_S=\dim \WS_S-\nu=\nu(n-\nu-1).$$
Moreover, every matrix of $\WS_S$ (and in particular every matrix of $\WA_S$) is nilpotent
since its characteristic polynomial equals $t^n$. Finally, we have seen earlier that every matrix of
$\WS_S$ is $S$-symmetric, and every matrix of $\WA_S$ is $S$-alternating.

Therefore, as the vector space isomorphism $u \in \calS_b \mapsto \Mat_\bfC(u) \in \calS_S$
(respectively, $u \in \calA_b \mapsto \Mat_\bfC(u) \in \calA_S$) preserves nilpotency,
the inverse image of $\WS_S$ (respectively, of $\WA_S$) under it
is a nilpotent linear subspace of $\calS_b$ (respectively, of $\calA_b$) of dimension $\nu(n-\nu)$
(respectively, of dimension $\nu(n-\nu-1)$).

Hence, the claimed maximal dimension is reached in points (a) and (b) of Theorem \ref{maintheo}.
In the next two subsections, we tackle the upper-bound in point (c) of Theorem \ref{maintheo}.

\subsection{A special case}\label{specialexamplesection}

Here, we consider the special case when $n=2\nu(b)+1$ and $\SKer Q=\{0\}$.
The following example is a reformulation of one that was communicated to us by Rachel Quinlan.

Let us take a normal basis $\bfB=(e_1,\dots,e_n)$ for $b$, with corresponding indices $p,q,m$.
We know that $\nu(b)=q+m$ and $\dim \SKer Q=q$, and hence $q=0$ and $n=2m+1$. It follows that $p=1$.
Hence,
$$\Mat_\bfB(b)= \alpha I_1 \oplus \begin{bmatrix}
0_m & I_m \\
I_m & 0_m
\end{bmatrix}$$
for some non-zero scalar $\alpha \in \F$.
Better still, one checks that
$$\bfC:=(e_1,e_2,\dots,e_{m+1},\alpha e_{m+2},\dots,\alpha e_{2m+1})$$
is also a normal basis for $\alpha^{-1} b$, and
$$\Mat_\bfC(\alpha^{-1} b)= I_1 \oplus \begin{bmatrix}
0_m & I_m \\
I_m & 0_m
\end{bmatrix}.$$
Obviously $\calA_b=\calA_{\alpha^{-1} b}$, and hence it suffices to tackle the case when $\alpha=1$.

\begin{Rem}
It can be shown that in the reduced situation where $\alpha=1$, the form $b$ is represented by $I_n$ in some basis of
$V$, and hence it is isometric to the standard scalar product $(X,Y) \mapsto X^TY$ on $\F^n$.
\end{Rem}

In the next step, we show how one can construct various $b$-alternating nilpotent endomorphisms by starting from $A$-alternating
nilpotent matrices, where $A=\begin{bmatrix}
0_m & I_m \\
I_m & 0_m
\end{bmatrix}$. It is this key observation that lead us to discover the special bound in point (c) of Theorem \ref{maintheo}.

\begin{lemma}\label{extensionlemma}
Let $A \in \Mata_n(\F) \cap \GL_n(\F)$.
Set
$$S:=\begin{bmatrix}
1 & [0]_{1 \times n} \\
[0]_{n \times 1} & A
\end{bmatrix}.$$
Let $M \in \calS_A$ be nilpotent.
The following conditions are then equivalent:
\begin{enumerate}[(i)]
\item The matrix $M$ is $A$-alternating.
\item For all $X \in \F^n$, the matrix
$$M_X:=
\begin{bmatrix}
0 & X^TA^T \\
X & M
\end{bmatrix}$$
is nilpotent.
\end{enumerate}
Moreover, if condition (i) holds then $M_X$ is $S$-alternating for all $X \in \F^n$.
\end{lemma}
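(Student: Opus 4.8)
We begin by recording the matrix identities that govern $M_X$. Since $S = I_1 \oplus A$ is block-diagonal, a direct computation gives
\[
S M_X = \begin{bmatrix} 0 & X^T A^T \\ A X & A M \end{bmatrix}.
\]
Because $A$ is alternating (hence symmetric with zero diagonal over a field of characteristic $2$) and $X^T A^T = (A X)^T$, the off-diagonal blocks of $S M_X$ are transposes of one another; so $S M_X$ is symmetric as soon as $A M$ is symmetric, i.e.\ as soon as $M \in \calS_A$. Its diagonal consists of the single entry $0$ together with the diagonal of $A M$. Thus $S M_X$ is alternating if and only if $A M$ is alternating, i.e.\ if and only if $M$ is $A$-alternating; and this holds for every $X$ or for no $X$. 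This proves at once the ``moreover" clause: if (i) holds then $M_X$ is $S$-alternating for all $X \in \F^n$.

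Next I turn to the equivalence of (i) and (ii), which is the heart of the lemma. The natural tool is to compare characteristic polynomials. Expanding along the first row and column, one finds
\[
\chi_{M_X}(t) = t \,\chi_M(t) - X^T A^T \operatorname{adj}(tI_n - M)\, X,
\]
so that, writing $\chi_M(t) = t^n$ (which holds since $M$ is nilpotent) and $\operatorname{adj}(tI_n - M) = \sum_{k=0}^{n-1} t^{n-1-k} N_k$ for suitable matrices $N_k$ (with $N_0 = I_n$), we get
\[
\chi_{M_X}(t) = t^{n+1} - \sum_{k=0}^{n-1} \bigl(X^T A^T N_k X\bigr)\, t^{n-1-k}.
\]
Hence $M_X$ is nilpotent for \emph{all} $X$ if and only if the quadratic form $X \mapsto X^T A^T N_k X$ vanishes identically for every $k$, i.e.\ if and only if each matrix $A^T N_k$ is alternating. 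The plan is then to show that this simultaneous alternating condition is equivalent to $A M$ being alternating. For the easy direction, if $M$ is $A$-alternating, one shows that $A^T N_k$ is alternating for all $k$: using the recursion $N_{k} = M N_{k-1} + c_{k} I_n$ coming from $\operatorname{adj}(tI-M)$ (here in fact $c_k=0$), together with the fact that products and sums of the symmetric matrices $AM, AM^2, \dots$ interact correctly with the alternating condition, one checks inductively that $A M^{j}$ is symmetric for all $j \ge 1$ and alternating for $j$ odd — but more care is needed since $A M^{j}$ need not be alternating for $j$ even. The cleanest route is: $A M$ symmetric $\Rightarrow$ $M^T A = A M$ $\Rightarrow$ $A M^{j}$ is symmetric for all $j \ge 1$; and then $X^T A^T N_k X = X^T (A M^{k}) X$ up to lower-order terms that telescope, so the whole sum $\sum_k (X^T A M^{k} X) t^{n-1-k}$ must be shown to vanish when $AM$ is alternating. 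I would handle this by the substitution $Y = $ (a suitable power-series in $M$ applied to $X$), reducing everything to the single identity $X^T (AM) X = 0$.

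For the converse — (ii) $\Rightarrow$ (i) — I take $k = n-1$, i.e.\ the bottom coefficient, which involves $A^T N_{n-1} = A^T M^{n-1}$... but this vanishes since $M^{n-1}$ may already be $0$; so instead one takes the \emph{smallest} $k$ for which $A^T N_k$ could fail to be alternating. Concretely, $N_0 = I_n$, so the coefficient of $t^{n-1}$ in $\chi_{M_X}$ is $-X^T A^T X$, which is automatically $0$ as $A$ is alternating; the coefficient of $t^{n-2}$ is $-X^T A^T M X = -X^T (AM) X$ (using $A^T = A$), and the vanishing of \emph{this} quadratic form for all $X$ says exactly that $A M$ is alternating, which is (i). So in fact only the $t^{n-2}$ coefficient is needed for this direction, and the argument is short. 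The main obstacle, as flagged above, is the forward direction: proving that when $AM$ is alternating, \emph{every} coefficient $X^T A^T N_k X$ vanishes, not just the first non-trivial one. I expect this to follow from a clean manipulation of the adjugate expansion combined with the identity $M^T A = AM$ and the characteristic-$2$ identity $(Z + Z^T)$ has zero diagonal, but it is the step that requires genuine bookkeeping rather than a one-line observation.
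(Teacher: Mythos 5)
Your computation of $SM_X$ and the resulting proof of the ``moreover'' clause are correct, and your argument for (ii) $\Rightarrow$ (i) is complete: expanding $\chi_{M_X}(t)=t\,\chi_M(t)-X^TA^T(tI_n-M)^{\ad}X$ and reading off the coefficient of $t^{n-2}$ gives $X^T(AM)X=0$ for all $X$, which is exactly condition (i). Your route differs from the paper's: you expand $(tI_n-M)^{\ad}=\sum_{k=0}^{n-1}M^k\,t^{n-1-k}$ and analyse the coefficients $X^TAM^kX$ one by one, whereas the paper writes $\chi_{M_X}(t)=t^{n+1}-(\det A)^{-1}(AX)^T(tA-AM)^{\ad}(AX)$ and reduces (ii) to the statement that $(tA-AM)^{\ad}$ is alternating over $\F(t)$, which it converts into the alternating character of $tA-AM$ itself via the identity $Y^TB^TY=(BY)^TB^{-1}(BY)$ (an invertible symmetric matrix is alternating iff its inverse is). Both are legitimate; yours is arguably more elementary since it works coefficientwise and avoids the passage to $\F(t)$.

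The gap is in (i) $\Rightarrow$ (ii), and you flag it yourself. What you must show is that $AM^k$ is alternating for every $k\ge 1$; your proposed mechanism (a single power-series substitution in $X$) is not the right one, and your worry that ``$AM^j$ need not be alternating for $j$ even'' is unfounded. The correct completion is a two-line parity argument using $M^TA=AM$ (which holds because $AM$ is symmetric and $A^T=A$): for $k=2i$ one has $AM^{2i}=(M^i)^TAM^i$, a congruence transform of the alternating matrix $A$, hence alternating; for $k=2i+1$ one has $AM^{2i+1}=(M^i)^T(AM)M^i$, a congruence transform of the alternating matrix $AM$, hence alternating. Equivalently, $X^TAM^{2i}X=(M^iX)^TA(M^iX)=0$ and $X^TAM^{2i+1}X=(M^iX)^T(AM)(M^iX)=0$ for all $X$. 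With this insertion your forward direction closes and the whole proof is correct.
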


In particular, if $M$ is $A$-alternating and nilpotent, then $M_X$ is $S$-alternating and nilpotent for all $X \in \F^n$.

\begin{proof}
Let $X \in \F^n$.
We compute
$$SM_X=\begin{bmatrix}
0 & (AX)^T \\
AX & AM
\end{bmatrix}.$$
Hence, if $M$ is $A$-alternating, then $M_X$ is $S$-alternating for all $X \in \F^n$.

Next, we prove that conditions (i) and (ii) are equivalent.
Let $X \in \F^n$.
We note that the characteristic polynomial of $M_X$ satisfies
$$\chi_{M_X}(t)=\det(t I_{n+1}-M_X)=(\det S)^{-1}
\det(t S-SM_X)=(\det A)^{-1}
\det(t S-SM_X).$$
Noting that
$$t S-SM_X=\begin{bmatrix}
t & (AX)^T \\
AX & tA-AM
\end{bmatrix},$$
we obtain
$$\det(t S-SM_X)
=t \det(t A-AM)-(AX)^T (tA-AM)^{\ad} (AX).$$
where $N^\ad$ denotes the classical adjoint of the square matrix $N$.
Note that
$$(\det A)^{-1} (t\det(t A-AM))=t\,\det (t I_n-M).$$
Since $M$ is nilpotent, we have $\det(tI_n-M)=t^n$.
Therefore,
$$\chi_{M_X}(t)=t^{n+1}-(\det A)^{-1} (AX)^T (tA-AM)^{\ad} (AX).$$
It follows that $M_X$ is nilpotent if and only if $(AX)^T (tA-AM)^{\ad} (AX)=0$.
Since $A$ is invertible, we deduce that condition (ii) is equivalent to having $(tA-AM)^{\ad}$ alternating.

Yet $tA-AM$ is invertible as a matrix with entries in the field of fractions $\F(t)$, because its determinant equals $(\det A) \chi_M(t)$, a non-zero element of $\F(t)$.
It follows that $(tA-AM)^\ad$ is a nonzero scalar multiple of $(tA-AM)^{-1}$.
Noting that
$$\forall Y \in \F^n, \;    Y^T(tA-AM)^T Y=((tA-AM) Y)^T(tA-AM)^{-1} (tA-AM) Y,$$
we deduce that $(tA-AM)^{-1}$ is alternating if and only if $(tA-AM)^T$ is alternating, and hence if and only if
$tA-AM$ is alternating. Finally, since $tA$ is alternating,
$tA-AM$ is alternating if and only if $AM$ is alternating, i.e.\ $M$ is $A$-alternating.
Piecing the previous equivalences together, we conclude that condition (i) is equivalent to condition (ii).
\end{proof}

We are ready to construct our example. Set $A:=\begin{bmatrix}
0_m & I_m \\
I_m & 0_m
\end{bmatrix}$, which is invertible and alternating. Hence, its Witt index equals $m$.
As we have seen in Section \ref{basicexamplesection}, there exists a nilpotent linear subspace $\calV$ of $\calA_A$ with dimension
$m(m-1)$.
Let us consider the vector space $\widetilde{\calV}$ of all matrices of the form
$$\begin{bmatrix}
0 & (AX)^T \\
X & M
\end{bmatrix} \quad \text{where $M \in \calV$ and $X \in \F^{2m}$.}$$
By Lemma \ref{extensionlemma}, $\widetilde{\calV}$ is nilpotent and included in $\calA_{S}$ for $S:=I_1 \oplus A$.
Obviously,
$$\dim \widetilde{\calV}=2m+\dim \calV=m(m+1).$$
Hence, the inverse image of $\widetilde{\calV}$ under the isomorphism $u \in \End(V) \mapsto \Mat_\bfC(u) \in \Mat_n(\F)$
is a nilpotent linear subspace of $\calA_{\alpha^{-1} b}$ (that is, of $\calA_b$) with dimension
$m(m+1)$. Here, $m=\nu(b)$ and $n=2m+1$, and hence we have $\nu(b)(n-\nu(b))-\dim \SKer Q=m(m+1)$.
This yields a space whose dimension equals the one stated in point (c) of Theorem \ref{maintheo}.

\subsection{A general example for the case $n=2\nu(b)+1$}

Here, we combine the techniques of the previous two paragraphs to obtain
an example in the special case $n=2\nu(b)+1$. Again, we take a normal basis
$(e_1,\dots,e_n)$ for $b$, with associated indices $p,q,m$.
We assume that $n=2\nu(b)+1$, so that $p=1$.
We consider the permuted basis
$$\bfC:=(e_{2},\dots,e_{q+1},e_1,e_{2q+2},\dots,e_{2q+1+2m},e_{q+2},\dots,e_{2q+1}).$$
In that basis, the matrix of $b$ has the form
$$S=\begin{bmatrix}
0_q & [0]_{q \times (n-2q)} & I_q \\
[0]_{(n-2q) \times q} & \Delta & [0]_{(n-2q) \times q} \\
I_q & [0]_{q \times (n-2q)} & \Theta
\end{bmatrix}$$
where $\Theta \in \Mat_q(\F)$ is a diagonal matrix and
$$\Delta:=\alpha I_1 \oplus \begin{bmatrix}
0_m & I_m \\
I_m & 0_m
\end{bmatrix}$$
for some non-zero scalar $\alpha \in \F$.
As seen in the preceding section, there exists a nilpotent linear subspace $\calW$ of $\calA_\Delta$ with dimension $m(m+1)$.
We fix such a space. Now, we consider the space $\widetilde{\calW}$ of all matrices of the form
$$\begin{bmatrix}
A & (\Delta E)^T & \Theta A^T+J \\
[0]_{(n-2q) \times q} & N & E \\
0_q & [0]_{q \times (n-2q)} & A^T
\end{bmatrix}$$
where $A \in \NT_q(\F)$, $N \in \calW$, $E \in \Mat_{n-2q,q}(\F)$ and $J \in \Mata_q(\F)$.
Note that every such matrix is nilpotent as it is block-upper-triangular and all its diagonal blocks are nilpotent.
Just line in Section \ref{basicexamplesection}, one checks that every matrix in $\widetilde{\calW}$ is $S$-alternating.
Hence, $\widetilde{\calW}$ is a nilpotent linear subspace of $\calA_S$. Moreover,
$$\dim \widetilde{\calW}=\dbinom{q}{2}+q(n-2q)+\dbinom{q}{2}+\dim \calW=q(q-1)+q(n-2q)+m(m+1).$$
Since $q=\dim \SKer Q$, we obtain
\begin{align*}
\dim \widetilde{\calW}+\dim \SKer Q & =q^2+q(n-2q)+m(m+1) \\
& =q^2+q(2m+1)+m(m+1) \\
& =(q+m)(q+m+1)=\nu(n-\nu).
\end{align*}
Hence, the inverse image of $\widetilde{\calW}$ under $u \in \calA_b \mapsto \Mat_\bfB(u) \in \calA_S$
is a nilpotent linear subspace of $\calA_b$ with dimension $\nu(n-\nu)-\dim \SKer Q$.

\section{Preliminary results on $b$-symmetric endomorphisms}\label{toolsSection}

Throughout the section, we fix a non-degenerate symmetric bilinear form $b$
on a finite-dimensional vector space $V$ over $\F$. Remember that $\F$ is assumed to be of characteristic $2$.

\subsection{Three basic lemmas}

We start with two elementary observations: both appear also in section 1.3 of \cite{dSPStructured1}.

\begin{lemma}[Lemma 1.4 of \cite{dSPStructured1}]\label{lemmakerim}
Let $u$ be a $b$-symmetric endomorphism of $V$.
Then:
\begin{enumerate}[(a)]
\item For every linear subspace $W$ of $V$ that is stable under $u$, the subspace
$W^\bot$ is also stable under $u$.
\item We have $\Ker u=(\im u)^\bot$.
\end{enumerate}
\end{lemma}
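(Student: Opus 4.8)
The plan is to reduce everything to the elementary self-adjointness identity hidden in the hypothesis. By definition, $u$ is $b$-symmetric exactly when the bilinear form $(x,y)\mapsto b(x,u(y))$ is symmetric, that is $b(x,u(y))=b(y,u(x))$ for all $x,y\in V$; combining this with the symmetry of $b$ itself, this is equivalent to
$$\forall (x,y)\in V^2,\quad b(u(x),y)=b(x,u(y)).$$
I would record this reformulation first, since both parts follow from it in one line each.

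For part (a), suppose $W$ is a linear subspace of $V$ with $u(W)\subseteq W$, and pick $y\in W^\bot$. For every $x\in W$ we have $u(x)\in W$, hence $b(x,u(y))=b(u(x),y)=0$ because $y\in W^\bot$. Since $x\in W$ is arbitrary, this shows $u(y)\in W^\bot$, so $W^\bot$ is stable under $u$.

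For part (b), I would argue by a chain of equivalences using non-degeneracy: for $y\in V$,
$$y\in(\im u)^\bot \iff \forall x\in V,\ b(u(x),y)=0 \iff \forall x\in V,\ b(x,u(y))=0 \iff u(y)\in\Rad(b)=\{0\},$$
and the last condition means precisely $y\in\Ker u$. (Alternatively, the self-adjointness identity immediately gives $\Ker u\subseteq(\im u)^\bot$, and equality then follows by a dimension count, since non-degeneracy yields $\dim(\im u)^\bot=\dim V-\rk u=\dim\Ker u$; I would mention this as a remark but present the equivalence version as the main argument.)

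There is no real obstacle here: the statement is a direct consequence of the definitions, and the only point requiring a modicum of care is the very first step, namely correctly invoking the symmetry of $b$ to turn ``$(x,y)\mapsto b(x,u(y))$ is symmetric'' into the usable form $b(u(x),y)=b(x,u(y))$.
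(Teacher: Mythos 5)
Your proof is correct and follows essentially the same route as the paper's: part (a) is the identical computation, and part (b) is the same chain of equivalences using non-degeneracy of $b$ (the paper phrases it with the roles of $x$ and $y$ swapped but the argument is the same). The preliminary reformulation $b(u(x),y)=b(x,u(y))$ is a harmless and correct bookkeeping step.
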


\begin{proof}
Let $W \subset V$ be stable under $u$.
Let $y \in W^\bot$. For all $x \in W$ we have $b(x,u(y))=b(y,u(x))=0$ because $u(x) \in W$, and hence $u(y) \in W^\bot$.

For all $x \in V$, we have $x \in (\im u)^\bot$ if and only if $b(x,u(y))=0$ for all $y \in V$, i.e.\
$b(y,u(x))=0$ for all $y \in V$, and this is equivalent to $u(x)=0$ because $b$ is non-degenerate.
\end{proof}

\begin{lemma}[Lemma 1.5 of \cite{dSPStructured1}]\label{lemmanonisotropic}
Assume that $b$ is non-isotropic.
Let $u$ be a nilpotent $b$-symmetric endomorphism of $V$. Then $u=0$.
\end{lemma}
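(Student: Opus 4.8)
The plan is to argue by induction on $\dim V$, peeling off a one-dimensional $u$-invariant orthogonal summand at each step. The base case $\dim V \leq 1$ is immediate, since a nilpotent endomorphism of a space of dimension at most $1$ is zero.

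For the inductive step, I would first dispose of the trivial situation $u=0$, and otherwise assume $u \neq 0$; then $V \neq \{0\}$, and since $u$ is nilpotent its kernel is nonzero, so I may pick a nonzero vector $x \in \Ker u$. Because $b$ is non-isotropic we have $b(x,x) \neq 0$, hence $b$ restricts to a non-degenerate form on the line $\F x$; this gives the orthogonal decomposition $V = \F x \botoplus (\F x)^\bot$, and consequently the restriction $b'$ of $b$ to $(\F x)^\bot$ is again non-degenerate and, being a restriction of a non-isotropic form, non-isotropic as well. Now $\F x$ is $u$-stable (as $u(x)=0$), so Lemma \ref{lemmakerim}(a) shows that $(\F x)^\bot$ is $u$-stable; the induced endomorphism $u'$ of $(\F x)^\bot$ is nilpotent and $b'$-symmetric, so the induction hypothesis yields $u'=0$. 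Since $u$ also kills $\F x$ and $V = \F x \oplus (\F x)^\bot$, we conclude $u=0$, contradicting $u \neq 0$.

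The only point requiring care is the passage to the subspace: one must check that $b$ restricted to $(\F x)^\bot$ remains \emph{non-degenerate} (not merely non-isotropic), and this is exactly what the orthogonal direct sum $V = \F x \botoplus (\F x)^\bot$ delivers, that decomposition itself being legitimate precisely because $b|_{\F x}$ is non-degenerate. Beyond this, there is no genuine obstacle; in particular the argument never uses that $\car \F = 2$.
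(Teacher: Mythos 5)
Your proof is correct, but it takes a genuinely different route from the paper's. The paper argues directly: assuming $u \neq 0$, take the minimal $k \geq 2$ with $u^k=0$; then $\im u^{k-1}$ is a non-zero subspace of both $\im u$ and $\Ker u$, and since $\Ker u=(\im u)^{\bot}$ by Lemma \ref{lemmakerim}(b), this subspace is totally $b$-singular, contradicting non-isotropy. You instead run an induction on $\dim V$: you use non-isotropy to see that any non-zero $x \in \Ker u$ satisfies $b(x,x)\neq 0$, split off the orthogonal summand $\F x$, and invoke Lemma \ref{lemmakerim}(a) to descend to $(\F x)^{\bot}$. Both arguments are sound and characteristic-free; the paper's is shorter (no induction, no orthogonal decomposition) and leans on part (b) of Lemma \ref{lemmakerim}, whereas yours leans on part (a) and has the mild virtue of exhibiting the structural fact that a nilpotent $b$-symmetric $u$ successively splits off anisotropic lines from its kernel. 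A small stylistic remark: your \emph{reductio} is unnecessary --- once you have $u'=0$ on $(\F x)^{\bot}$ and $u(x)=0$ you have directly proved $u=0$, so you could drop the initial assumption $u\neq 0$ and simply conclude.
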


\begin{proof}
Assume that $u \neq 0$. Then $u^k=0$ for some minimal $k \geq 2$, and then $\im u^{k-1}$ is a non-zero subspace of both
$\im u$ and $\Ker u$. Using point (b) of Lemma \ref{lemmakerim}, we deduce that $\im u^{k-1}$ is totally $b$-singular, and we contradict
the fact that $b$ is non-isotropic.
\end{proof}

The next lemma is less easy: its proof is given in section 4.1 of \cite{dSPStructured2}.
Let us simply mention that the proof is by induction and involves Corollary \ref{quotientindexlemma}.

\begin{lemma}[See lemma 4.4 from \cite{dSPStructured2}]\label{stablesetimlemma}
Let $u$ be a nilpotent $b$-symmetric endomorphism of $V$.
Then, there exists a totally $b$-singular subspace of $V$ that has dimension $\nu(b)$ and is stable under $u$.
\end{lemma}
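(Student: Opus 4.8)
The plan is to argue by induction on $\dim V$, peeling off a hyperbolic plane at each step and invoking Corollary \ref{quotientindexlemma} to track the Witt index. If $\nu(b)=0$ there is nothing to prove (the zero subspace works), so assume $\nu(b)\geq 1$ and pick a nonzero $b$-isotropic vector. The key preliminary observation is that among the nonzero isotropic vectors we can find one that is \emph{well placed} with respect to $u$: since $u$ is nilpotent, the subspace $\Ker u$ is nonzero and $u$-stable, and by part (a) of Lemma \ref{lemmakerim} together with the identity $\Ker u=(\im u)^\bot$ of part (b), the relevant small $u$-stable subspaces are automatically linked to orthogonality. I would first try to produce a nonzero isotropic vector $x$ with $u(x)=0$; indeed, the bottom of a Jordan chain of $u$ lies in $\Ker u=(\im u)^\bot$, and one checks $\im u\neq V$, so $\Ker u$ has dimension $\geq 1$. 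If $\Ker u$ contains a nonzero isotropic vector, take $x$ there.

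Given such an $x$ with $u(x)=0$ and $Q_b(x)=0$, I would pass to the space $\{x\}^\bot/\F x$ equipped with the induced non-degenerate symmetric bilinear form $\overline{b}$. Since $x\in\Ker u=(\im u)^\bot$, the subspace $\{x\}^\bot$ contains $\im u$; and since $u(x)=0$, $u$ maps $\{x\}^\bot$ into itself — indeed $u$ maps all of $V$ into $\im u\subset\{x\}^\bot$ — and kills $\F x$, so $u$ descends to a well-defined endomorphism $\overline{u}$ of $\{x\}^\bot/\F x$. This $\overline{u}$ is nilpotent (it is a quotient of a restriction of a nilpotent operator) and it is $\overline{b}$-symmetric (the defining symmetry of the bilinear form $(y,z)\mapsto b(y,u(z))$ descends). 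By Corollary \ref{quotientindexlemma}, $\nu(\overline{b})=\nu(b)-1$. The induction hypothesis furnishes a totally $\overline{b}$-singular subspace $\overline{E}$ of $\{x\}^\bot/\F x$ of dimension $\nu(b)-1$ that is $\overline{u}$-stable. Pulling $\overline{E}$ back to a subspace $E'$ of $\{x\}^\bot$ containing $\F x$, one gets a $u$-stable subspace of dimension $\nu(b)$; it is totally $b$-singular because $\overline{E}$ is totally $\overline{b}$-singular and $x\in\Rad$ of $b|_{\{x\}^\bot}$ (as $x\in(\{x\}^\bot)^\bot$, using $x$ isotropic). Set $E:=E'$; this is the desired subspace.

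The main obstacle is the case where $\Ker u$ contains \emph{no} nonzero isotropic vector — then the argument above does not start. Here I would instead look deeper into the Jordan structure: let $k\geq 1$ be minimal with $u^k=0$. If $k=1$ then $u=0$ and any maximal totally singular subspace works, so assume $k\geq 2$. Then $\im u^{k-1}$ is a nonzero subspace contained in both $\im u$ and $\Ker u=(\im u)^\bot$, hence $\im u^{k-1}$ is totally $b$-singular and in particular consists of isotropic vectors; since $\im u^{k-1}\subset\Ker u$, this contradicts the assumption that $\Ker u$ has no nonzero isotropic vector unless $\im u^{k-1}=0$, which is absurd. Therefore $\Ker u$ always contains a nonzero isotropic vector, and the dichotomy collapses — the first case always applies. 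The one routine point that still needs care is checking that $E'$ as constructed is genuinely $u$-stable and totally $b$-singular with the right dimension, but these are immediate from the correspondence between subspaces of $\{x\}^\bot/\F x$ containing $0$ and subspaces of $\{x\}^\bot$ containing $\F x$, together with $x\in(\{x\}^\bot)^\bot$. This completes the induction.
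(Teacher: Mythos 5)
Your proof is correct and follows exactly the route the paper indicates for this lemma (it defers the proof to Lemma 4.4 of \cite{dSPStructured2}, describing it as an induction involving Corollary \ref{quotientindexlemma}): you produce a nonzero isotropic vector in $\Ker u=(\im u)^\bot$ via the totally singular subspace $\im u^{k-1}$, quotient by it, and induct. All the verifications (stability and total singularity of the pullback, $\overline{b}$-symmetry of $\overline{u}$, the Witt index drop) are handled correctly.
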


\subsection{On $b$-alternating and $b$-symmetric tensors}

First, denote by $V^\star$ the dual space of $V$. Given $x \in V$ and $\varphi \in V^\star$, we define the following endomorphism
of $V$:
$$\varphi \otimes x : \begin{cases}
V & \longrightarrow V \\
y & \longmapsto \varphi(y)\,x.
\end{cases}$$
We observe that:
\begin{itemize}
\item The trace of $\varphi \otimes x$ equals $\varphi(x)$;
\item Every endomorphism of rank $1$ of $V$ has the form $\varphi \otimes x$ for some non-zero elements $\varphi \in V^\star$
and $x \in V$;
\item For non-zero elements $\varphi,\varphi'$ of $V^\star$, and non-zero elements $x,x'$ of $V$,
the endomorphisms $\varphi \otimes x$ and $\varphi' \otimes x'$ are equal if and only if $\varphi'=\alpha \varphi$
and $x=\alpha x'$ for some $\alpha \in \F \setminus \{0\}$;
\item Since $b$ is non-degenerate, every endomorphism of $V$ of rank $1$ has the form $b(y,-) \otimes x$ for some pair
$(x,y)$ of non-zero vectors of $V$ (determined up to multiplication of $x$ by a non-zero scalar $\alpha$ and of $y$ by $\alpha^{-1}$).
\end{itemize}

Next, given $x \in V$, we note that $b(x,-) \in V^\star$ and that
$$x \otimes_b x:=b(x,-) \otimes x$$
is a $b$-symmetric endomorphism of $V$. We call it the \textbf{$b$-symmetric square of~$x$.}

\begin{lemma}\label{rank1lemma}
Let $u \in \calS_b$ have rank $1$. Then $u=\alpha\,x \otimes_b x$ for some non-zero $x \in V$ and some $\alpha \in \F \setminus \{0\}$.
\end{lemma}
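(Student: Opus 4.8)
The plan is to combine the $b$-symmetry of $u$ with the structural description of rank-one endomorphisms recalled just before the statement. Since $b$ is non-degenerate, every endomorphism of rank $1$ has the form $b(y,-) \otimes x$ for some non-zero vectors $x,y \in V$. So I would start by writing $u=b(y,-)\otimes x$ with $x \neq 0$ and $y \neq 0$, and then translate the condition $u \in \calS_b$ into a scalar identity. Since $u(w)=b(y,w)\,x$ for all $w$, the bilinear form $(v,w)\mapsto b(v,u(w))$ equals $(v,w)\mapsto b(y,w)\,b(v,x)$, and its symmetry means exactly that
$$b(y,w)\,b(v,x)=b(y,v)\,b(w,x)\qquad\text{for all }v,w\in V.$$

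Next I would exploit this identity to show that $y$ is a non-zero multiple of $x$. Because $y\neq 0$ and $b$ is non-degenerate, there is a vector $w_0$ with $b(y,w_0)\neq 0$. Specializing the identity to $w=w_0$ gives $b(v,x)=\lambda\,b(v,y)$ for all $v\in V$, where $\lambda:=b(w_0,x)/b(y,w_0)$; hence $b(v,x-\lambda y)=0$ for all $v$, and non-degeneracy of $b$ forces $x=\lambda y$. Since $x\neq 0$, we get $\lambda\neq 0$.

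Finally I would substitute $y=\lambda^{-1}x$ back in, obtaining $u=b(\lambda^{-1}x,-)\otimes x=\lambda^{-1}\bigl(b(x,-)\otimes x\bigr)=\lambda^{-1}\,(x\otimes_b x)$, which is the claimed conclusion with $\alpha:=\lambda^{-1}\in\F\setminus\{0\}$. I do not expect a genuine obstacle here: the argument is a short direct computation, and the only points needing a little care are the two uses of non-degeneracy (first to produce $w_0$, then to deduce $x=\lambda y$) and the remark that $\lambda\neq 0$, which is what guarantees that $x\otimes_b x=b(x,-)\otimes x$ indeed has rank $1$ (it does, since $x\neq 0$ and $b$ is non-degenerate imply $b(x,-)\neq 0$).
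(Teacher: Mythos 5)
Your proof is correct and follows essentially the same route as the paper: write $u=b(y,-)\otimes x$, translate $b$-symmetry into the identity $b(y,w)\,b(v,x)=b(y,v)\,b(w,x)$, and conclude $y\in\F x\setminus\{0\}$ using non-degeneracy. The only (harmless) difference is that you deduce $x=\lambda y$ directly by specializing at a vector $w_0$ with $b(y,w_0)\neq 0$, whereas the paper argues by contradiction with a choice of test vectors in $\{x\}^\bot\setminus\{y\}^\bot$.
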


\begin{proof}
Since $u$ has rank $1$, we have $\im u=\F x$ for some $x \in V \setminus \{0\}$.
Then $u=b(y,-) \otimes x$ for some $y \in V \setminus \{0\}$.
Since $u$ is $b$-symmetric, for all $s,t$ in $V$ we have
$$b\bigl(s,b(y,t)\,x\bigr)=b\bigl(t,b(y,s)\,x\bigr),$$
that is
$$b(y,t)\,b(s,x)=b(y,s)\,b(x,t).$$
If $y \not\in \F x$ then $\{x\}^\bot\not\subset \{y\}^\bot$ and hence by choosing $s \in \{x\}^\bot \setminus \{y\}^\bot$ and $t \in V \setminus \{x\}^\bot$
we obtain a contradiction from the above equality.
Hence $y=\alpha x$ for some $\alpha \in \F \setminus \{0\}$, and we conclude that $u=\alpha\,x \otimes_b x$.
\end{proof}

Given $x,y$ in $V$, one checks that the endomorphism
$$x \wedge_b y :=b(y,-)\otimes x-b(x,-)\otimes y$$
of $V$ is $b$-alternating: we call it the \textbf{$b$-alternating tensor of $x$ and $y$.}
We note that $x \wedge_b y$ is nonzero if and only if $x$ and $y$ are linearly independent, in which case
its range is $\Vect(x,y)$ since the non-degeneracy of $b$ guarantees that $b(y,-)$ and $b(x,-)$ are also linearly independent
in this situation.

Let $x$ be a non-zero element of $V$. Then the mapping
$$y \in V \mapsto x \wedge_b y \in \calA_b$$
is linear with kernel $\F x$.
Moreover, given $\alpha \in \F$ and $y \in V$, the equality $\alpha\, x \otimes_b x=x \wedge_b y$ holds if and only if $y \in \F x$
and $\alpha=0$. This is immediate from the observation that $x \otimes_b x$ has rank $1$ and $x \wedge_b y$ has rank $0$ or $2$.

The relevance of the above construction is emphasized by the following result:

\begin{prop}\label{caracsymmetrictensors}
Let $x$ be a non-zero $b$-isotropic vector of $V$.
Let $u \in \calS_b$. The following conditions are equivalent:
\begin{enumerate}[(i)]
\item The endomorphism $u$ vanishes at $x$ and maps $\{x\}^\bot$ into $\F x$.
\item There exist $y \in \{x\}^\bot$ and $\alpha \in \F$ such that $u=\alpha\,x \otimes_b x+x\wedge_b y$.
\end{enumerate}
Moreover, if they hold then $u$ is nilpotent and $u^3=0$.
\end{prop}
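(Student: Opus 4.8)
The plan is to prove the two implications separately and then verify the nilpotency claim. First I would treat (ii)$\,\Rightarrow\,$(i): this is a direct computation. If $u=\alpha\,x\otimes_b x+x\wedge_b y$ with $y\in\{x\}^\bot$, then applying $u$ to $x$ uses $b(x,x)=0$ (as $x$ is $b$-isotropic) and $b(y,x)=0$ (as $y\in\{x\}^\bot$), so $u(x)=0$. For any $z\in\{x\}^\bot$ we have $u(z)=\alpha\,b(x,z)\,x+b(y,z)\,x-b(x,z)\,y=b(y,z)\,x\in\F x$, since $b(x,z)=0$. This gives (i).

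The substantive direction is (i)$\,\Rightarrow\,$(ii). Assume $u$ vanishes at $x$ and sends $\{x\}^\bot$ into $\F x$. Since $x$ is $b$-isotropic, $x\in\{x\}^\bot$, and by Lemma \ref{lemmakerim}(a) the subspace $\{x\}^\bot$ is stable under $u$ (it is the orthogonal of the $u$-stable line $\F x=\im(\text{something})$... more carefully: $\F x$ is $u$-stable because $u(x)=0$, hence $\{x\}^{\bot}$ is $u$-stable by Lemma \ref{lemmakerim}(a)). So $u$ induces an endomorphism on $\{x\}^\bot$ with image in $\F x$; i.e., the restriction $u|_{\{x\}^\bot}$ is a map $\{x\}^\bot\to\F x$, hence of the form $z\mapsto\varphi(z)\,x$ for some linear form $\varphi$ on $\{x\}^\bot$. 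Extend $\varphi$ to a linear form on $V$; since $b$ is non-degenerate, $\varphi=b(y_0,-)$ for some $y_0\in V$, and we may adjust $y_0$ modulo $\Rad$ considerations — but $b$ is non-degenerate, so $y_0$ is determined up to nothing; however $\varphi$ was only defined on $\{x\}^\bot$, so $y_0$ is determined modulo $(\{x\}^\bot)^\bot=\F x$. Thus $u(z)=b(y_0,z)\,x$ for all $z\in\{x\}^\bot$. Now compare $u$ with $x\wedge_b y_0$: for $z\in\{x\}^\bot$, $(x\wedge_b y_0)(z)=b(y_0,z)\,x-b(x,z)\,y_0=b(y_0,z)\,x=u(z)$, so $u-x\wedge_b y_0$ vanishes on $\{x\}^\bot$. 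Since $\{x\}^\bot$ has codimension $1$ and $u-x\wedge_b y_0$ is $b$-symmetric (both $u$ and $x\wedge_b y_0$ are, the latter being $b$-alternating hence $b$-symmetric in characteristic $2$), its image is contained in $(\{x\}^\bot)^\bot=\F x$ by Lemma \ref{lemmakerim}(b) applied to $\Ker(u-x\wedge_b y_0)\supseteq\{x\}^\bot$; so $u-x\wedge_b y_0$ has rank at most $1$ with image in $\F x$, and by Lemma \ref{lemmakerim}(b) its kernel is $(\im)^\bot\supseteq\{x\}^\bot$, forcing the image to be in $\F x$. A rank-$\le 1$ $b$-symmetric endomorphism with image in $\F x$ is, by Lemma \ref{rank1lemma}, of the form $\alpha\,x\otimes_b x$ (or zero). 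Finally I must arrange $y_0\in\{x\}^\bot$: the freedom $y_0\mapsto y_0+\lambda x$ does not affect $x\wedge_b y_0$ (since $x\wedge_b x=0$), so this choice is harmless, but to land in condition (ii) as stated I instead observe that replacing $y_0$ by its component — actually the cleanest fix is: decompose, using that $b(x,x)=0$ and picking $w$ with $b(x,w)=1$, write $y_0=y+c\,w$ with $y\in\{x\}^\bot$; then $x\wedge_b y_0=x\wedge_b y+c\,(x\wedge_b w)$, and $x\wedge_b w$ contributes a term that one folds into $\alpha\,x\otimes_b x$ only if $x\wedge_b w$ has the right form, which it does not in general. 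So the right move is simpler: since $x\wedge_b y_0=x\wedge_b(y_0+\lambda x)$ for all $\lambda$, and we only need SOME $y$ in $\{x\}^\bot$, note that the ambiguity in $y_0$ is exactly modulo $\F x\subset\{x\}^\bot$, and separately $\varphi$ restricted to $\{x\}^\bot$ is represented by $b(y_0,-)|_{\{x\}^\bot}$ for $y_0$ determined mod $\F x$ — but that does not put $y_0$ in $\{x\}^\bot$. The honest resolution: one shows directly that $u=\alpha\,x\otimes_b x+x\wedge_b y$ with $y\in\{x\}^\bot$ is possible because the term $b(x,-)\,y_0$ appearing in $x\wedge_b y_0$ must anyway be absorbed; I expect the cleanest route is to first reduce to $u(w)$ for a fixed $w$ with $b(x,w)=1$ and solve for $y$ and $\alpha$ by linear algebra there.

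The main obstacle is precisely this last bookkeeping step — pinning down $y\in\{x\}^\bot$ and the scalar $\alpha$ simultaneously from the single remaining value $u(w)$, using $b$-symmetry of $u$ to constrain $u(w)$ (namely $b(x,u(w))=b(w,u(x))=0$ so $u(w)\in\{x\}^\bot$, which is what makes the system solvable). Once (ii) is established, the nilpotency and $u^3=0$ follow readily: $u$ kills $x$, maps $\{x\}^\bot$ into $\F x\subset\{x\}^\bot$, and maps $V$ into $\{x\}^\bot$ (because $\im u\subseteq\Vect(x,y)\subseteq\{x\}^\bot$, as $y\in\{x\}^\bot$ and $x$ is isotropic). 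Hence $u(V)\subseteq\{x\}^\bot$, $u(\{x\}^\bot)\subseteq\F x$, and $u(\F x)=0$, giving $u^3(V)\subseteq u^2(\{x\}^\bot)\subseteq u(\F x)=0$.
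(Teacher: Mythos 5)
Your treatment of (ii)$\Rightarrow$(i) and of the nilpotency claim matches the paper's. For (i)$\Rightarrow$(ii) you take a genuinely different route: the paper argues by a case analysis on $\rk u\in\{0,1,2\}$ (invoking Lemma \ref{rank1lemma} in the rank-one case and, in the rank-two case, writing $u=\varphi\otimes x+\psi\otimes y'$ and subtracting a suitable alternating tensor), whereas you immediately read off a candidate $y_0$ from the restriction $u(z)=\varphi(z)\,x$ on $\{x\}^\bot$, represent $\varphi$ as $b(y_0,-)$, and reduce to the rank-$\le 1$ endomorphism $u-x\wedge_b y_0$. That route is sound and arguably more direct, but as written your proof stops short: you explicitly leave the step ``$y_0\in\{x\}^\bot$'' unresolved and propose several detours, none of which you carry out.

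The point is that this step is automatic, so the detours are unnecessary. Since $x$ is $b$-isotropic, $x$ itself lies in $\{x\}^\bot$, so $\varphi$ is already defined at $x$, and $u(x)=0$ forces $\varphi(x)=0$. Any $y_0$ with $b(y_0,-)$ extending $\varphi$ therefore satisfies $b(y_0,x)=\varphi(x)=0$, i.e.\ $y_0\in\{x\}^\bot$. (Equivalently: if $b(y_0,x)\neq 0$ then $(x\wedge_b y_0)(x)=b(y_0,x)\,x\neq 0$, contradicting the facts that $u(x)=0$ and that $u-x\wedge_b y_0$ vanishes on $\{x\}^\bot\ni x$.) With that one line your argument closes. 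The only other point worth spelling out is the very last reduction: the $b$-symmetric endomorphism $w:=u-x\wedge_b y_0$ has rank at most $1$ and $\im w\subset\F x$, so if $w\neq 0$ then Lemma \ref{rank1lemma} gives $w=\alpha\,z\otimes_b z$ with $\F z=\im w=\F x$, whence $w=\beta\,x\otimes_b x$; this is exactly the rank-one step of the paper's proof.
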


\begin{proof}
Assume first that $u=\alpha \,x \otimes_b x+x\wedge_b y$ for some $\alpha \in \F$
and some $y \in \{x\}^\bot$. Then, $u(x)=\alpha\, b(x,x)\,x+b(y,x)\,x-b(x,x)\,y=0$.
Moreover, for every $z \in \{x\}^\bot$ we have $u(z)=b(y,z)\,x$ because $b(x,z)=0$.
Hence, condition (i) is satisfied. Finally, noting that $\im u \subset \Vect(x,y) \subset \{x\}^\bot$ we observe that $\im u^2 \subset \F x$ and
hence that $u^3=0$.

Conversely, assume that (i) holds. Since $u$ maps $\{x\}^\bot$ into $\F x$, the rank theorem shows that
$\rk u \leq 2$ and if $\rk u=2$ then $\F x \subset \im u$. We consider in turn the three possibilities for $\rk u$.

If $u=0$ then (ii) holds with $\alpha=0$ and $y=0$.

Suppose $\rk u=1$. By Lemma \ref{rank1lemma}, $u=\alpha\,z \otimes_b z$ for some $z \in V \setminus \{0\}$
and some $\alpha \in \F \setminus \{0\}$. Then, $\im u=\F z$ and $\Ker u=\{z\}^\bot$.
Since $u$ has rank $1$ and $u(\{x\}^\bot) \subset \F x$, we either have
$x \in \im u=\F z$ or $\{x\}^\bot \subset \Ker u=\{z\}^\bot$. In either case $\F x = \F z$ and we conclude that
$u=\beta\,x \otimes_b x$ for some $\beta \in \F \setminus \{0\}$. Hence (ii) holds with $y=0$.

Assume finally that $\rk u=2$. Since $x \in \Ker u$, Lemma \ref{lemmakerim} yields that $\im u \subset \{x\}^\bot$.
Moreover, we have seen earlier that $x \in \im u$ because $\rk u=2$.
Hence, $\im u =\Vect(x,y')$ for some vector $y' \in \{x\}^\bot \setminus \F x$, and we can write
$u=\varphi \otimes x+\psi \otimes y'$ for some linear forms $\varphi,\psi$ on $V$.
The fact that $u$ maps $\{x\}^\bot$ into $\F x$ yields that $\psi$ vanishes everywhere on $\{x\}^\bot$, and hence
$\psi=\mu\, b(x,-)$ for some $\mu \in \F$. Finally, we have seen in the first part of the proof that
$\mu\,x \wedge_b y'$, which is $b$-symmetric, satisfies condition (i), and hence it is obvious that $v:=u+x \wedge_b (\mu y')$ also does.
Now, clearly $\im v \subset \F x$, and hence by the previous cases $v=\alpha\, x \otimes_b x$ for some $\alpha \in \F$,
whence $u=\alpha\, x \otimes_b x+x \wedge_b y$ for $y:=-\mu y' \in \{x\}^\bot$.
Hence, we have shown that condition (i) implies condition (ii).
\end{proof}

\begin{cor}\label{caracalternatingtensors}
Let $x$ be a non-zero $b$-isotropic vector of $V$.
Let $u \in \calA_b$. The following conditions are equivalent:
\begin{enumerate}[(i)]
\item The endomorphism $u$ vanishes at $x$ and maps $\{x\}^\bot$ into $\F x$.
\item There exists $y \in \{x\}^\bot$ such that $u=x\wedge_b y$.
\end{enumerate}
Moreover, if they hold then $u$ is nilpotent.
\end{cor}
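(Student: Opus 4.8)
The plan is to reduce everything to Proposition \ref{caracsymmetrictensors}, exploiting that in characteristic $2$ we have $\calA_b \subset \calS_b$, so that any $b$-alternating $u$ is in particular $b$-symmetric and the proposition applies to it.

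First I would assume (i). Since $u \in \calS_b$ vanishes at $x$ and maps $\{x\}^\bot$ into $\F x$, Proposition \ref{caracsymmetrictensors} provides some $y \in \{x\}^\bot$ and some $\alpha \in \F$ with $u=\alpha\,x\otimes_b x + x\wedge_b y$, and the whole point is to show $\alpha=0$. For this I would test $b$-alternation of $u$ on an arbitrary vector $t \in V$: the alternating-tensor term contributes nothing, since $b\bigl(t,(x\wedge_b y)(t)\bigr)=b(y,t)\,b(x,t)-b(x,t)\,b(y,t)=0$, while $b\bigl(t,(x\otimes_b x)(t)\bigr)=b(x,t)^2$, so $b(t,u(t))=\alpha\,b(x,t)^2$. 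Because $u$ is $b$-alternating this must vanish for every $t$; choosing (by non-degeneracy of $b$ and $x\neq 0$) a vector $t$ with $b(x,t)=1$ forces $\alpha=0$, hence $u=x\wedge_b y$, which is (ii).

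Conversely, if (ii) holds then $u=x\wedge_b y$ is $b$-alternating by the observation recorded just before Proposition \ref{caracsymmetrictensors}, and substituting $\alpha=0$ into the first paragraph of the proof of that proposition shows that $u$ vanishes at $x$ and maps $\{x\}^\bot$ into $\F x$, i.e.\ (i) holds; the same substitution, or the direct remark that $u(x)=0$ and $u(y)=b(y,y)\,x$ because $b(x,y)=0$, shows $u^2=0$, so $u$ is nilpotent, which also settles the final clause. I do not expect a genuine obstacle here: the only term that could fail to be $b$-alternating is the $b$-symmetric square $x\otimes_b x$, and $b$-alternation of $u$ is precisely what eliminates it; the one computation needing a moment's care is $b\bigl(t,(x\wedge_b y)(t)\bigr)=0$, which holds verbatim in characteristic $2$.
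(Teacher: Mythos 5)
Your proof is correct and takes essentially the same route as the paper's: both reduce to Proposition \ref{caracsymmetrictensors} via the inclusion $\calA_b\subset\calS_b$ and then kill the coefficient of $x\otimes_b x$ by observing that $b(t,u(t))=\alpha\,b(x,t)^2$ must vanish for a vector $t$ with $b(x,t)\neq 0$. (One minor quibble on your parenthetical remark: for $u=x\wedge_b y$ with $y\in\{x\}^\bot$ one computes $u^2(z)=b(x,z)\,b(y,y)\,x$, so in general only $u^3=0$ and not $u^2=0$; nilpotency nonetheless follows at once from the Proposition, as your first argument already shows.)
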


\begin{proof}
We note that $x \wedge_b y$ is $b$-alternating for all $y \in V$.
In light of Proposition \ref{caracsymmetrictensors} and of the fact that $\calA_b$ is a linear subspace of $\calS_b$, it suffices to prove that the only scalar $\alpha$ for which $\alpha\,x \otimes_b x$ is $b$-alternating is $0$.
Let then $\alpha \in \F$. If $\alpha\,x \otimes_b x$ is $b$-alternating then
$$\forall z \in V, \; \alpha \,b(z,x)^2=0,$$
and hence $\alpha=0$ because we can choose $z \in V \setminus \{x\}^\bot$.
\end{proof}

\subsection{The a-transform of a symmetric bilinear form}

Denote by
$$Q : x \in V \mapsto b(x,x) \in \F$$
the quadratic form attached to $b$. We now view $Q$ as a mapping with codomain $\overline{\F}$.
Remember that $Q$ is additive. Since $Q(\lambda x)=\lambda^2 Q(x)$ for all $\lambda \in \F$ and all $x \in V$, we obtain that
$$(x,y)\in V^2 \mapsto \sqrt{Q(x)Q(y)}$$
is a symmetric $\F$-bilinear mapping from $V^2$ to $\overline{\F}$. Note that it is not a bilinear \emph{form} in general, as in general it does not map
into $\F$.
The symmetric $\F$-bilinear mapping
$$(x,y)\in V^2 \mapsto b(x,y)+\sqrt{Q(x)Q(y)} \in \overline{\F}$$
is called the \textbf{a-transform} of $b$, and we denote it by $b_a$.

One important feature of $b_a$ is that it is alternating: indeed, since $\F$ has characteristic $2$ we have
$$\forall x \in V, \; b_a(x,x)=b(x,x)+\sqrt{Q(x)^2}=2\, Q(x)=0.$$
Now we look at the matrix viewpoint.

Let $S=(s_{i,j}) \in \Mats_n(\F)$. We denote by
$$\Delta(S):=(s_{i,i})_{1 \leq i \leq n} \in \F^n$$
the \textbf{diagonal vector} of $S$, and by
$$\Delta(S)^{1/2}:=(\sqrt{s_{i,i}})_{1 \leq i \leq n} \in \overline{\F}^n$$
its square root.
The symmetric matrix
$$S_a:=S+\Delta(S)^{1/2} (\Delta(S)^{1/2})^T$$
belongs to $\Mats_n(\overline{\F})$: we call it the \textbf{a-transform} of $S$.
Better still, $S_a$ is alternating since for all $i \in \lcro 1,n\rcro$
its $i$-th diagonal entry equals $s_{i,i}+(\sqrt{s_{i,i}})^2=2 s_{i,i}=0$.

Now, let $\bfB:=(e_1,\dots,e_n)$ be a basis of $V$, and set
$S:=\Mat_\bfB(b)$.
Then, for all $(i,j)\in \lcro 1,n\rcro^2$,
$$b_a(e_i,e_j)=s_{i,j}+\sqrt{Q(e_i)Q(e_j)}=s_{i,j}+\Bigl(\Delta(S)^{1/2} (\Delta(S)^{1/2})^T\Bigr)_{i,j}$$
and hence
$$S_a=\bigl(b_a(e_i,e_j)\bigr)_{1 \leq i,j \leq n.}$$

Two vectors $x$ and $y$ of $V$ will be called $b_a$-orthogonal whenever $b_a(x,y)=0$.
More generally, given subsets $X$ and $Y$ of $V$, we say that $X$ is $b_a$-orthogonal to $Y$, and we write
$X \bot_{b_a} Y$, whenever $b_a(x,y)=0$ for all $x \in X$ and $y \in Y$.

\begin{lemma}[Orthogonality lemma for the a-transform]\label{atransformlemma}
Let $b$ be a non-degenerate symmetric bilinear form on an $n$-dimensional vector space $V$.
Let $V_1$ and $V_2$ be linear subspaces of $V$ such that $V_1 \bot_{b_a} V_2$.
Assume furthermore that $\dim V_1+\dim V_2>n$.
Then:
\begin{itemize}
\item $n$ is odd and the Witt index of $b$ equals $\frac{n-1}{2}\cdot$
\item Both $V_1$ and $V_2$ include $(\Ker Q)^\bot$.
\item $\dim V_1+\dim V_2=n+1$.
\end{itemize}
\end{lemma}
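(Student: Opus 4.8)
The plan is to pass to the algebraic closure, where the a-transform becomes a genuine bilinear form that is almost non-degenerate. Write $\ell\colon V\to\overline{\F}$ for the $\F$-linear map $x\mapsto\sqrt{Q(x)}$ (it is $\F$-linear because $Q$ is additive and $\sqrt{Q(\lambda x)}=\lambda\sqrt{Q(x)}$), so that $b_a(x,y)=b(x,y)+\ell(x)\ell(y)$. Set $V':=V\otimes_\F\overline{\F}$, let $b'$ be the non-degenerate symmetric bilinear form on $V'$ extending $b$, let $\ell'$ be the $\overline{\F}$-linear extension of $\ell$, and define the symmetric $\overline{\F}$-bilinear form $\beta(v,w):=b'(v,w)+\ell'(v)\ell'(w)$ on $V'$; one checks that $\beta$ restricts to $b_a$ on $V\times V$ and, exactly as for $b_a$, is alternating. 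Since $b'$ is non-degenerate and $\beta-b'$ has rank at most $1$, we get $\rk\beta\geq n-1$; being alternating, $\beta$ has even rank, hence $\rk\beta=n$ when $n$ is even and $\rk\beta=n-1$ when $n$ is odd. Consequently $R:=\Rad(\beta)$ satisfies $\dim_{\overline{\F}}R\leq 1$, with equality exactly when $n$ is odd.

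Next I would transport the hypothesis. The relation $V_1\bot_{b_a}V_2$ extends to $\beta(v,w)=0$ for all $v\in V_1'$, $w\in V_2'$ (with $V_i':=V_i\otimes_\F\overline{\F}$), i.e.\ $V_2'\subseteq(V_1')^{\bot_\beta}$. Using the dimension formula $\dim_{\overline{\F}}(V_1')^{\bot_\beta}=n-\dim_{\overline{\F}}V_1'+\dim_{\overline{\F}}(V_1'\cap R)$ for the orthogonal of a subspace with respect to the possibly degenerate form $\beta$, together with $\dim_{\overline{\F}}V_i'=\dim_\F V_i$, we obtain
$$\dim_\F V_1+\dim_\F V_2\ \leq\ n+\dim_{\overline{\F}}(V_1'\cap R)\ \leq\ n+\dim_{\overline{\F}}R .$$
Since $\dim V_1+\dim V_2>n$, this forces $\dim_{\overline{\F}}R=1$; then $\rk\beta=n-1$ is even, so $n$ is odd, and moreover $\dim V_1+\dim V_2=n+1$ and $V_1'\cap R=R=V_2'\cap R$, that is $R\subseteq V_1'\cap V_2'$. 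This settles the first and third bullets.

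For the second bullet I would identify $R$ explicitly and then descend to $\F$. Choose an $\F$-basis $(\varepsilon_1,\dots,\varepsilon_k)$ of the finite-dimensional $\F$-subspace $\im\ell$ of $\overline{\F}$, write $\ell=\sum_{j=1}^k\ell_j\,\varepsilon_j$ with $\F$-linear forms $\ell_j\colon V\to\F$, and let $z_1,\dots,z_k\in V$ be the unique vectors with $\ell_j=b(z_j,-)$ (using non-degeneracy of $b$). Then $\Ker Q=\Ker\ell=\bigcap_j\Ker\ell_j=\Vect(z_1,\dots,z_k)^{\bot}$, so $(\Ker Q)^{\bot}=\Vect(z_1,\dots,z_k)$ and the $z_j$ are linearly independent. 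Putting $z':=\sum_{j=1}^k\varepsilon_j z_j\in V'$, one has $\ell'=\sum_j\varepsilon_j\,b'(z_j,-)=b'(z',-)$, whence $\beta(v,w)=b'\bigl(v+\ell'(v)z',w\bigr)$ and therefore $R=\{v\in V':v=\ell'(v)z'\}$; since $R\neq\{0\}$ this yields $R=\overline{\F}\,z'$. Now $V_1'=V_1\otimes_\F\overline{\F}$ contains $z'=\sum_j z_j\otimes\varepsilon_j$; decomposing $V'=\bigoplus_{t}V\otimes_\F\F\varepsilon_t$ as an $\F$-vector space along an $\F$-basis $(\varepsilon_t)_t$ of $\overline{\F}$ that extends $(\varepsilon_1,\dots,\varepsilon_k)$, the $\varepsilon_j$-component $z_j\otimes\varepsilon_j$ of $z'$ must lie in $V_1\otimes_\F\F\varepsilon_j$, so $z_j\in V_1$; likewise $z_j\in V_2$. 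Hence $(\Ker Q)^{\bot}=\Vect(z_1,\dots,z_k)\subseteq V_1\cap V_2$.

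Finally I would deduce $n=2\nu(b)+1$. As $V_1\bot_{b_a}V_2$ and both subspaces contain $(\Ker Q)^{\bot}$, the space $(\Ker Q)^{\bot}$ is totally $b_a$-isotropic, so $b(x,y)=\ell(x)\ell(y)$ for all $x,y\in(\Ker Q)^{\bot}$; picking a basis $(x_1,\dots,x_r)$ of $(\Ker Q)^{\bot}$, the Gram matrix $\bigl(b(x_i,x_j)\bigr)_{i,j}$ equals $c\,c^{T}$ with $c:=(\ell(x_1),\dots,\ell(x_r))^{T}$, hence has rank at most $1$. On the other hand the radical of $b$ restricted to $(\Ker Q)^{\bot}$ is $(\Ker Q)^{\bot}\cap\Ker Q=\SKer Q$, so that rank equals $\dim(\Ker Q)^{\bot}-\dim\SKer Q$, which is the integer $p$ attached to a normal basis for $b$ (Section \ref{normalbasisSection}). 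Thus $p\leq 1$; and since $n-2\nu(b)=p$ with $n$ odd, we conclude $p=1$, i.e.\ $n=2\nu(b)+1$ and $\nu(b)=\frac{n-1}{2}$. The most delicate steps here are the explicit determination $R=\overline{\F}\,z'$ together with the rationality (descent) step that turns $R\subseteq V_1'\cap V_2'$ into $(\Ker Q)^{\bot}\subseteq V_1\cap V_2$, and the normal-basis bookkeeping needed to recognise $\dim(\Ker Q)^{\bot}-\dim\SKer Q$ as the parameter $p$ that pins down $n=2\nu(b)+1$.
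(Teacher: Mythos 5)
Your proof is correct. The overall strategy coincides with the paper's: extend scalars to $\overline{\F}$, note that the extended a-transform is alternating of corank at most $1$, and combine the hypothesis $\dim V_1+\dim V_2>n$ with the formula $\dim\calZ+\dim\calZ^{o}=n+\dim(\calZ\cap\Rad)$ to force corank exactly $1$ --- whence $n$ odd, $\dim V_1+\dim V_2=n+1$, and the radical contained in both extended subspaces. Where you genuinely diverge is in identifying that radical and descending to $\F$: the paper passes to a normal basis, exhibits an explicit kernel vector $X_0$ with entries $\sqrt{a_i}^{\,\pm1}$, and recovers $(\Ker Q)^\bot\subset V_i$ by a duality argument on $\F$-hyperplanes using the linear independence of the $\sqrt{a_i}$ over $\F$; you instead write $\ell=\sqrt{Q}$ as $\sum_j\ell_j\varepsilon_j$, take $b$-Riesz representers $z_j$ of the components $\ell_j$, obtain $\Rad(\beta)=\overline{\F}\,z'$ with $z'=\sum_j\varepsilon_j z_j$ from the identity $\beta(v,w)=b'(v+\ell'(v)z',w)$, and descend through the grading $V\otimes_\F\overline{\F}=\bigoplus_t V\otimes_\F\F\varepsilon_t$. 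This is basis-free and shorter, at the cost of not producing the explicit normal-form coordinates that the paper uses elsewhere; note that your descent step tacitly uses that the $\ell_j$ (hence the $z_j$) are linearly independent, which does hold since $(\varepsilon_1,\dots,\varepsilon_k)$ is a basis of $\im\ell$, so it is worth a sentence. Your concluding step (the Gram matrix of $b$ on $(\Ker Q)^\bot$ equals $cc^T$, hence has rank $p\leq 1$, while $p=n-2\nu(b)$ is odd) is in substance identical to the paper's computation $D=\Delta(D)^{1/2}\bigl(\Delta(D)^{1/2}\bigr)^T$ with $p=\rk D$ odd.
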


\begin{proof}
We start with an arbitrary basis $(e_1,\dots,e_n)$ of $V$.

For $i \in \{1,2\}$, we denote by $W_i$ the linear subspace of $\F^n$ consisting of the coordinate vectors of the elements of $V_i$ in $\bfB$,
and by $\overline{W_i}$ the $\overline{\F}$-linear subspace of $\overline{\F}^n$ spanned by the elements of $W_i$.
Classically, we have $\dim_{\overline{\F}} \overline{W_i}=\dim_{\F} W_i$.
We consider the alternating $\overline{\F}$-bilinear form
$$c : (X,Y) \in (\overline{\F}^n)^2 \mapsto X^T S_a Y.$$
The fact that $V_1$ is $b_a$-orthogonal to $V_2$ means that $W_1$ is $c$-orthogonal to $W_2$.
It follows that $\overline{W_1}$ is $c$-orthogonal to $\overline{W_2}$.
In the remainder of the proof, given a subset $X$ of $\overline{\F}^n$, we denote by $X^o$ its orthogonal complement with respect to
$c$.

Since $\dim_{\overline{\F}} \overline{W_1}+\dim_{\overline{\F}} \overline{W_2}=\dim_\F W_1+\dim_\F W_2>n$,
we find that the alternating form $c$ is degenerate, and hence the matrix $S_a$ is singular.

Yet, as $\Delta(S)^{1/2} (\Delta(S)^{1/2})^T$ has rank at most $1$, we have $\rk (S_a) \geq \rk S-1=n-1$.
Since $S_a$ is singular, we actually have $\rk(S_a)=n-1$. Moreover, since $S_a$ is alternating its rank is even, and hence
$n$ is odd.

Next, we claim that each one of $\overline{W_1}$ and $\overline{W_2}$ is the orthogonal complement of the other for $c$,
and in particular each one includes the kernel of $S_a$. Indeed, for every linear subspace $\calZ$ of $\overline{\F}^n$, we have the known formula
$$\dim_{\overline{\F}} \calZ^o+\dim_{\overline{\F}} \calZ=n+\dim_{\overline{\F}} (\calZ \cap \Rad(c)).$$
In particular, since $\Rad(c)=\Ker S_a$ has dimension $1$, we have
$\dim \overline{W_1}^o \leq n+1-\dim \overline{W_1} \leq \dim \overline{W_2}$; combining this with the converse inclusion
$\overline{W_2} \subset \overline{W_1}^o$ yields the equality $\overline{W_2}=\overline{W_1}^o$.
Symmetrically, $\overline{W_1}=\overline{W_2}^o$.

Now, we assume that $(e_1,\dots,e_n)$ is actually a normal basis for $b$, and we write $S$ as
$$S=\Diag(a_1,\dots,a_p) \oplus \begin{bmatrix}
0_q & I_q \\
I_q & \Diag(a_{p+1},\dots,a_{p+q})
\end{bmatrix} \oplus \begin{bmatrix}
0_m & I_m \\
I_m & 0_m
\end{bmatrix}$$
where the scalars $\sqrt{a_1},\dots,\sqrt{a_{p+q}}$ are linearly independent
in the $\F$-vector space $\overline{\F}$. Here, $n$ is odd, and hence so is $p$.

We define $X_0 \in \overline{\F}^n$ as
$$X_0:=\begin{bmatrix}
\sqrt{a_1}^{-1} & \cdots & \sqrt{a_p}^{-1} & \sqrt{a_{p+1}} & \cdots & \sqrt{a_{p+q}} & 0 & \cdots & 0
\end{bmatrix}^T.$$
We check that $X_0 \in \Ker S_a$ and $X_0 \neq 0$.
To see this, note first that
$$\bigl(\Delta(S)^{1/2}\bigr)^T X_0=p.1_\F=1_\F$$
because $p$ is odd. Besides, a direct computation yields
$$SX_0=\begin{bmatrix}
\sqrt{a_1} & \cdots & \sqrt{a_p} & [0]_{q \times 1} & \sqrt{a_{p+1}} & \cdots & \sqrt{a_{p+q}} & [0]_{2m \times 1}
\end{bmatrix}^T=\Delta(S)^{1/2},$$
and hence
$$S_a X_0=SX_0+\Bigl(\bigl(\Delta(S)^{1/2}\bigr)^T X_0\Bigr)\,\Delta(S)^{1/2}=2 \Delta(S)^{1/2}=0.$$
Since $X_0 \neq 0$, we deduce that $\Ker S_a=\overline{\F} X_0$ and hence $X_0 \in \overline{W_1} \cap \overline{W_2}$.

Now, consider an $\F$-linear hyperplane $H$ of $\F^n$ that includes $W_1$, with equation
$\underset{i=1}{\overset{n}{\sum}} \lambda_i\, x_i=0$
where the $\lambda_i$'s all belong to $\F$.
This equation must also be satisfied by all the vectors of $\overline{W_1}$.
In particular, as $X_0 \in \overline{W_1}$ we have
$$\sum_{i=1}^p (\lambda_i (a_i)^{-1})\,\sqrt{a_i}+\sum_{i=p+1}^{p+q} \lambda_i\,\sqrt{a_i}=0.$$
Since the $\sqrt{a_i}'s$ are linearly independent over $\F$, it follows that $\lambda_i=0$ for all $i \in \lcro 1,p+q\rcro$.
Since this holds for every linear hyperplane of $\F^n$ that includes $W_1$,
we obtain by duality that $\F^{p+q} \times \{0\} \subset W_1$.
As $(\Ker Q)^\bot=\Vect(e_1,\dots,e_{p+q})$, we deduce that
$(\Ker Q)^\bot \subset V_1$. Likewise, $(\Ker Q)^\bot \subset V_2$.

We are about to conclude. What we have just seen shows that $(\Ker Q)^\bot$ is $b_a$-orthogonal to itself.
Setting $D:=\Diag(a_1,\dots,a_p)$, it follows that $0=D_a=D+\Delta(D)^{1/2} (\Delta(D)^{1/2})^T$,
whence $D=\Delta(D)^{1/2} (\Delta(D)^{1/2})^T$. Since $p=\rk D$ is odd, we conclude that $p=1$.
Finally, the Witt index of $b$ equals $m+q=\frac{n-p}{2}=\frac{n-1}{2}$, and the proof is complete.
\end{proof}

\subsection{Two orthogonality lemmas on nilpotent spaces of $b$-symmetric endomorphisms}

We start by recalling a very classical lemma on nilpotent spaces of endomorphisms
(see, e.g., section 2.1 of \cite{dSPStructured1} for a proof):

\begin{lemma}[Basic orthogonality lemma]
Let $\calV$ be a nilpotent linear subspace of $\End(V)$. Then
$\tr(u \circ v)=0$ for all $u$ and $v$ in $\calV$.
\end{lemma}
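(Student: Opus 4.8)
The plan is to use that $\calV$ is a \emph{linear} subspace, so that for all $u,v\in\calV$ the three endomorphisms $u$, $v$ and $u+v$ are nilpotent simultaneously, and to extract the vanishing of $\tr(u\circ v)$ from the vanishing of a well-chosen coefficient of their characteristic polynomials. The crucial point --- and the reason the classical argument must be adapted here --- is that one should \emph{not} invoke $\tr\bigl((u+v)^2\bigr)=\tr(u^2)+2\,\tr(u\circ v)+\tr(v^2)$, whose cross term disappears in characteristic $2$, but rather the second coefficient of the characteristic polynomial, whose polarization carries $\tr(u\circ v)$ with coefficient $1$.

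Fix a basis of $V$ and identify every endomorphism with its matrix in $\Mat_n(\F)$, which changes neither $\tr$ nor characteristic polynomials. For $M\in\Mat_n(\F)$, set
$$e_2(M):=\sum_{1\le i<j\le n}\bigl(m_{i,i}m_{j,j}-m_{i,j}m_{j,i}\bigr),$$
the sum of the $2$ by $2$ principal minors of $M$; this is exactly the coefficient of $t^{n-2}$ in $\det(tI_n-M)$, hence it vanishes whenever $M$ is nilpotent (as then $\det(tI_n-M)=t^n$). Applying this to $u$, $v$ and $u+v$ --- all nilpotent since they lie in $\calV$ --- yields $e_2(u)=e_2(v)=e_2(u+v)=0$.

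Finally I would expand $e_2(u+v)$ and isolate the bilinear cross term. Since $e_2$ is a homogeneous degree-$2$ polynomial in the matrix entries, one has $e_2(u+v)=e_2(u)+e_2(v)+B(u,v)$ with
$$B(u,v)=\sum_{1\le i<j\le n}\bigl(u_{i,i}v_{j,j}+v_{i,i}u_{j,j}-u_{i,j}v_{j,i}-v_{i,j}u_{j,i}\bigr).$$
Reorganising the first two families of terms as $\sum_{i\neq j}u_{i,i}v_{j,j}=\tr(u)\,\tr(v)-\sum_i u_{i,i}v_{i,i}$ and the last two as $\sum_{i\neq j}u_{i,j}v_{j,i}=\tr(u\circ v)-\sum_i u_{i,i}v_{i,i}$, the diagonal contributions cancel and one obtains $B(u,v)=\tr(u)\,\tr(v)-\tr(u\circ v)$. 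Since $u$ is nilpotent, $\tr(u)=0$, so $B(u,v)=-\tr(u\circ v)$; but $B(u,v)=e_2(u+v)-e_2(u)-e_2(v)=0$ by the previous paragraph, whence $\tr(u\circ v)=0$. The only mild care required is the bookkeeping that reassembles the off-diagonal terms into $\tr(u\circ v)$ while cancelling the diagonal ones; the conceptual point is simply to work with $e_2$ rather than $\tr(M^2)$, which makes the proof insensitive to the characteristic of $\F$.
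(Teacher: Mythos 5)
Your proof is correct, and the bookkeeping checks out: $e_2(u+v)-e_2(u)-e_2(v)=\tr(u)\tr(v)-\tr(u\circ v)$, the diagonal terms cancelling as you say. The paper does not reproduce a proof of this lemma (it defers to section 2.1 of \cite{dSPStructured1}), but the argument via the coefficient of $t^{n-2}$ of the characteristic polynomial is the standard characteristic-free one, and your remark about avoiding the $\tr\bigl((u+v)^2\bigr)$ polarization is exactly the right precaution here, since the whole paper works in characteristic $2$.
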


Here is a simple consequence:

\begin{lemma}[First orthogonality lemma for tensors]\label{tensorortho1}
Let $\calV$ be a nilpotent linear subspace of $\calS_b$.
Let $x \in V\setminus \{0\}$ be a non-zero isotropic vector. Let $u \in \calV$, and
let $(\alpha,y) \in \F \times V$ be such that $\calV$ contains $\alpha\, x \otimes_b x+ x \wedge_b y$.
Then, $\alpha\, b(x,u(x))=0$.
\end{lemma}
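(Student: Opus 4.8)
The statement to prove is Lemma \ref{tensorortho1}: with $\calV\subset\calS_b$ nilpotent, $x\neq 0$ isotropic, $u\in\calV$, and $w:=\alpha\,x\otimes_b x+x\wedge_b y\in\calV$ for some $(\alpha,y)\in\F\times V$, one has $\alpha\,b(x,u(x))=0$. The natural approach is to feed $u$ and $w$ into the Basic orthogonality lemma, which gives $\tr(u\circ w)=0$, and then compute this trace explicitly using the rank-$\leq 2$ structure of $w$. So the plan is: first recall that $\tr(\varphi\otimes z)=\varphi(z)$, and that trace is additive and $\tr(u\circ(\varphi\otimes z))=\tr((\varphi\circ u)\otimes z)=\varphi(u(z))$ — indeed $u\circ(\varphi\otimes z):y\mapsto \varphi(y)u(z)=(\varphi\otimes u(z))(y)$, so $\tr(u\circ(\varphi\otimes z))=\varphi(u(z))$.

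Next I would expand $w=\alpha\,b(x,-)\otimes x+b(y,-)\otimes x-b(x,-)\otimes y$, so that
$$\tr(u\circ w)=\alpha\,b(x,u(x))+b(y,u(x))-b(x,u(y)).$$
Now I invoke $b$-symmetry of $u$: $b(x,u(y))=b(y,u(x))$, so the last two terms cancel, leaving $\tr(u\circ w)=\alpha\,b(x,u(x))$. Since the Basic orthogonality lemma gives $\tr(u\circ w)=0$ (as $u,w\in\calV$ and $\calV$ is nilpotent), the conclusion $\alpha\,b(x,u(x))=0$ follows immediately.

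There is essentially no obstacle here; the only points requiring a little care are (i) that $x$ need not be independent from $y$ and the tensor expansion must be written in terms of linear forms rather than presuming $x\wedge_b y\neq 0$ — but the identity $x\wedge_b y=b(y,-)\otimes x-b(x,-)\otimes y$ is formal and valid regardless; and (ii) making sure the cancellation uses $b$-symmetry of $u$ in the correct direction, namely the defining property that $(s,t)\mapsto b(s,u(t))$ is symmetric. Since we are in characteristic $2$, signs are irrelevant, which only simplifies matters. Thus the whole argument is a two-line trace computation combined with the Basic orthogonality lemma.
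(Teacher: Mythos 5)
Your proof is correct and is essentially identical to the paper's: both expand $\alpha\,x\otimes_b x+x\wedge_b y$ into rank-one tensors, compute $\tr(u\circ w)=\alpha\,b(x,u(x))+b(y,u(x))+b(x,u(y))$, and kill the last two terms using the $b$-symmetry of $u$ (the paper phrases this as $2\,b(y,u(x))=0$ in characteristic $2$, you phrase it as a cancellation valid in any characteristic), before invoking the Basic orthogonality lemma. No gaps.
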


\begin{proof}
Set $v:=\alpha\, x \otimes_b x+ x \wedge_b y$.
By the basic orthogonality lemma, the endomorphism
$$u \circ v=\alpha\, b(x,-) \otimes u(x)+b(y,-) \otimes u(x)+b(x,-) \otimes u(y)$$
has trace zero, and the claimed result ensues by noting that the trace of $u \circ v$ equals
$$\alpha\, b(x,u(x))+b(y,u(x))+b(x,u(y))=\alpha\, b(x,u(x))+2\,b(y,u(x))=\alpha\, b(x,u(x)).$$
\end{proof}

The second result is slightly different as it is related to $b_a$-orthogonality rather than to $b$-orthogonality.

\begin{lemma}[Second orthogonality lemma for tensors]\label{tensorortho2}
Let $\calV$ be a nilpotent linear subspace of $\calS_b$.
Let $x \in V\setminus \{0\}$ be a non-zero isotropic vector. Let $u \in \calV$ be such that $b(x,u(x))=0$, and
let $y \in \{x\}^\bot \setminus \F x$ be such that $\calV$ contains $x \wedge_b y$.
Then, $b_a(u(x),y)=0$.
\end{lemma}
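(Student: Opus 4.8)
The plan is to mimic the proof of the first orthogonality lemma but to compute the trace of a composition involving the $b$-symmetric square $y\otimes_b y$ rather than just $x\wedge_b y$. The hypothesis $b(x,u(x))=0$ should let us control the ``$b(x,\cdot)$'' contributions, while the square roots attached to $Q$ will be what converts a raw $b$-orthogonality statement into the claimed $b_a$-orthogonality statement $b_a(u(x),y)=b(u(x),y)+\sqrt{Q(u(x))Q(y)}=0$.

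\textbf{Key steps.} First I would observe that, since $\calV$ is a nilpotent subspace of $\calS_b$ and $y\otimes_b y=b(y,-)\otimes y$ is $b$-symmetric, if we could arrange for $\calV$ to contain $y\otimes_b y$ we would immediately get $\tr\bigl(u\circ(y\otimes_b y)\bigr)=0$, i.e.\ $b(y,u(y))=0$. But $\calV$ need not contain $y\otimes_b y$; what it does contain is $x\wedge_b y$. So instead I would consider the composition $(x\wedge_b y)\circ u\circ(x\wedge_b y)$ or, more to the point, compute $\tr\bigl((x\wedge_b y)\circ u\circ(x\wedge_b y)\bigr)=0$ using that $(x\wedge_b y)\,u\,(x\wedge_b y)$ lies in the two-sided ``sandwich'' that is again a composition of elements for which the basic orthogonality lemma applies after noticing $(x\wedge_b y)\,u\,(x\wedge_b y)$ has the form $w\mapsto \langle\text{stuff}\rangle$; more cleanly, since $\calV$ is nilpotent and $b$-symmetric, the endomorphism $(x\wedge_b y)u(x\wedge_b y)+$ (a $b$-symmetric correction) can be forced to vanish in trace against any member of $\calV$. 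Expanding $x\wedge_b y=b(y,-)\otimes x-b(x,-)\otimes y$ and using $b(x,y)=0$, $b(x,x)=0$, and $b(x,u(x))=0$, the cross terms collapse; the surviving term should be a multiple of $b(y,u(y))\,b(x,-)\otimes x$ plus a multiple of something involving $b(y,u(x))$, and tracing against $u$ (or reading off the rank-$1$ structure directly) should yield an identity of the shape $b(y,u(y))\cdot(\text{scalar})=0$ together with $b(x,u(y))\,b(y,u(x))=0$. The delicate point is to extract precisely $b(u(x),y)+\sqrt{Q(u(x))Q(y)}=0$ rather than $b(y,u(y))=0$; for this I would use that $u$ is $b$-symmetric so $b(u(x),y)=b(x,u(y))$, and combine the $b(x,u(y))$-relation with the $Q$-additivity: $Q(u(x))$ expands via the matrix of $b$ in a normal basis into a sum of squares, and the cross term $\sqrt{Q(u(x))Q(y)}$ is exactly what appears when one completes the square in $Q(u(x)+ (\text{multiple of }y))$ or $Q\bigl(u(x)+\lambda y\bigr)$ for a suitable $\lambda$. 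Concretely, since $x$ is isotropic and $x\wedge_b y\in\calV$, the endomorphism $u':=u+ c\,(x\wedge_b y)$ also has $b(x,u'(x))=0$ for every $c$, and by choosing $c$ cleverly one can normalize $b(x,u'(y))$; applying Lemma~\ref{tensorortho1} to $u'$ in place of $u$ (note $\alpha$ there is free) and chasing the quadratic dependence on $c$ should isolate the square-root term.

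\textbf{Main obstacle.} The main difficulty I anticipate is not the trace computation itself — that is routine rank-$1$ bookkeeping with $\varphi\otimes x$ — but rather the appearance of the square root $\sqrt{Q(u(x))Q(y)}$, which is an element of $\overline{\F}$ and not of $\F$. The trace identities coming from the basic orthogonality lemma live over $\F$, so they can only ever produce $\F$-linear relations among the $b(\cdot,\cdot)$ values; the passage to $b_a$ must therefore come from \emph{squaring}: I expect the honest statement to be that the $\F$-valued quantity $b(u(x),y)^2 + Q(u(x))Q(y)$ vanishes (this is $b_a(u(x),y)^2$, which lies in $\F$ since $b_a(u(x),y)^2 = b(u(x),y)^2 + Q(u(x))Q(y)$ by additivity of square roots), and then one takes square roots in $\overline{\F}$ to conclude $b_a(u(x),y)=0$. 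So the real content is to prove $b(u(x),y)^2 = Q(u(x))\,Q(y)$, which I would get by evaluating $Q$ on a carefully chosen vector built from $u(x)$, $y$, and $x$ (using that $x$ is isotropic and $b$-orthogonal to $y$, and $b(x,u(x))=0$), expanding by additivity of $Q$, and invoking a trace-zero relation for the rank-$\leq 2$ endomorphism that this vector generates together with $x\wedge_b y\in\calV$.
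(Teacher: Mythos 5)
Your final paragraph correctly identifies what actually has to be proved: the $\F$-valued identity $b(u(x),y)^2=Q(u(x))\,Q(y)$ (up to the harmless $x$-components of $u(x)$ and $y$), from which $b_a(u(x),y)=0$ follows by taking square roots in $\overline{\F}$. The gap is in the mechanism you propose for obtaining this identity. All of your candidate tools are pairwise trace identities: the basic orthogonality lemma, Lemma~\ref{tensorortho1} applied to $u'=u+c\,(x\wedge_b y)$, and traces of sandwiches built from rank-$\leq 2$ operators. These are $\F$-bilinear in the members of $\calV$, and in characteristic $2$ they degenerate completely here: since $(x\wedge_b y)(x)=b(y,x)\,x-b(x,x)\,y=0$, one has $b(x,u'(x))=b(x,u(x))=0$ for every $c$, so Lemma~\ref{tensorortho1} returns the tautology $0=0$; likewise $\tr\bigl(u\circ(x\wedge_b y)\bigr)=2\,b(y,u(x))=0$ identically. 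More fundamentally, the target is a $2\times 2$ Gram-determinant identity, quadratic in $u$ and involving the diagonal values $Q(u(x))$ and $Q(y)$, which never occur in any $\tr(v\circ w)$ with $v,w\in\calV$; no amount of bilinear trace bookkeeping can produce it.

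The paper's proof uses strictly stronger input. It writes the matrices $M$ of $u$ and $N$ of $x\wedge_b y$ in a basis adapted to a hyperbolic pair $(x,x')$ and exploits the \emph{full} characteristic polynomials $\chi_M=\chi_{M+N}=t^{n+2}$, not just one or two coefficients. A multilinear expansion of $\det(tI+M+N)$ gives $B_1(t)+B_2(t)=0$; a separate symmetry argument (conjugating by $S$ and transposing) shows $B_1(t)=0$; and then $B_2(t)=0$, after Gaussian elimination and a specialization argument over the infinite field $\overline{\F}$, yields exactly $(X^TPX)(Y^TPY)=(X^TPY)^2$, which in characteristic $2$ is the Gram identity you need. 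None of these steps is present in, or replaceable by, the trace-based outline, so as it stands the proposal does not prove the lemma.
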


\begin{proof}
Set $n:=\dim V-2$.
We choose a vector $x' \in V$ such that $b(x,x')=1$. Since $b(x,x)=0$, we see that $x$ and $x'$ are linearly independent.
Set $c:=b(x',x')$.

With respect to the basis $(x,x')$, the matrix of the restriction of $b$ to $\Vect(x,x')^2$ equals
$\begin{bmatrix}
0 & 1 \\
1 & c
\end{bmatrix}$. This matrix is invertible, and hence this restriction is non-degenerate.
It follows that $V=\Vect(x,x') \botoplus \Vect(x,x')^\bot$ and that the
restriction of $b$ to $(\Vect(x,x')^\bot)^2$ is non-degenerate.
Now, we consider a basis $(f_1,\dots,f_n)$ of $\Vect(x,x')^\bot$, and the matrix $P:=(b(f_i,f_j))_{1 \leq i,j \leq n}$.
The family $\bfC:=(x,f_1,\dots,f_n,x')$ is a basis of $V$, and the matrix $S:=\Mat_\bfC(b)$ satisfies
$$S=
\begin{bmatrix}
0 & [0]_{1 \times n} & 1 \\
[0]_{n \times 1} & P & [0]_{n \times 1} \\
1 & [0]_{1 \times n} & c
\end{bmatrix}.$$

One checks that the $S$-symmetric matrices are exactly the matrices of the form
$$L=\begin{bmatrix}
\lambda(L) & (PC_2(L))^T+c\,(PC_1(L))^T & \mu(L) \\
C_1(L) & P^{-1}K(L) & C_2(L) \\
\delta(L) & (PC_1(L))^T & \lambda(L)+c\, \delta(L)
\end{bmatrix}$$
with $\lambda(L)$, $\mu(L)$ and $\delta(L)$ in $\F$, $C_1(L)$ and $C_2(L)$ in $\F^n$, and $K(L)$ in $\Mats_n(\F)$.

Now, we consider the respective matrices $M$ and $N$ of $u$ and $v:=x \wedge_b y$
with respect to $\bfC$. Since $u(x) \in \{x\}^\bot$, we actually have $\delta(M)=0$, so that
$$M=\begin{bmatrix}
\lambda(M) & (PC_2(M))^T+c\,(PC_1(M))^T & \mu(M) \\
C_1(M) & P^{-1}K(M) & C_2(M) \\
0 & (PC_1(M))^T & \lambda(M)
\end{bmatrix}.$$
Besides, $v$ maps $\{x\}^\bot$ into $\F x$, and hence $\delta(N)=0$, $C_1(N)=0$ and $K(N)=0$; moreover, $v(x)=0$ and hence $\lambda(N)=0$. Finally,
we have $y=\alpha x+z$ for some $\alpha \in \F$ and some $z \in \Vect(x,x')^\bot$, and one computes that
$$v(x')=b(y,x')\, x-b(x,x')\, y=\alpha x-y=-z=z.$$
Hence, $\mu(N)=0$.
Let us set $X:=C_1(M)$ and $Y:=C_2(N)$.
Thus,
$$N=\begin{bmatrix}
0 & (PY)^T & 0 \\
[0]_{n \times 1} & 0_n & Y \\
0 & [0]_{1 \times n} & 0
\end{bmatrix}.$$

Next, the matrices $M$ and $M+N$ represent $u$ and $u+v$, respectively, and since $u$ and $u+v$ belong to $\calV$
we deduce that $M$ and $M+N$ are nilpotent.
We see that
$$t I_{n+2}+M=\begin{bmatrix}
t+\lambda(M) & (PC_2(M))^T+c\,(PX)^T & \mu(M) \\
X & t I_n+P^{-1}K(M) & C_2(M) \\
0 & (PX)^T & t+\lambda(M)
\end{bmatrix}$$
and
$$t I_{n+2}+M+N=\begin{bmatrix}
t+\lambda(M) & (PC_2(M))^T+c\,(PX)^T+(PY)^T & \mu(M) \\
X & t I_n+P^{-1}K(M) & C_2(M)+Y \\
0 & (PX)^T & t+\lambda(M)
\end{bmatrix}.$$
Using the linearity of the determinant with respect to the first row, we obtain
\begin{multline*}\det(t I_{n+2}+M+N)=\begin{vmatrix}
t+\lambda(M) & (PC_2(M))^T+c(PX)^T & \mu(M) \\
X & t I_n+P^{-1}K(M) & C_2(M)+Y \\
0 & (PX)^T & t+\lambda(M)
\end{vmatrix} \\
+\begin{vmatrix}
0 & (PY)^T & 0 \\
X & t I_n+P^{-1}K(M) & C_2(M)+Y \\
0 & (PX)^T & t+\lambda(M)
\end{vmatrix}.
\end{multline*}
Using the linearity of the determinant with respect to the last column, we deduce that
$$\det(tI_{n+2}+M+N)=\det(tI_{n+2}+M)+B_1(t)+B_2(t)$$
where
$$B_1(t):=\begin{vmatrix}
t+\lambda(M) & (PC_2(M))^T+c\,(PX)^T & 0 \\
X & t I_n+P^{-1}K(M) & Y \\
0 & (PX)^T & 0
\end{vmatrix}+\begin{vmatrix}
0 & (PY)^T & 0 \\
X & tI_n+P^{-1}K(M) & C_2(M) \\
0 & (PX)^T & t+\lambda(M)
\end{vmatrix}$$
and
$$B_2(t):=\begin{vmatrix}
0 & (PY)^T & 0 \\
X & tI_n+P^{-1}K(M) & Y \\
0 & (PX)^T & 0
\end{vmatrix}.$$

As $M+N$ and $M$ are nilpotent, we have $\det(t I_{n+2}+M+N)=t^{n+2}=\det(tI_{n+2}+M)$, and it ensues that $B_1(t)=B_2(t)$.

Next, we prove that $B_1(t)=0$.
To see this, note that the matrix in the first summand of the definition of $B_1(t)$ is closely related to the
$S$-adjoint of the matrix in the second one!
Precisely, if we set
$$A_1:=\begin{bmatrix}
0 & (PY)^T & 0 \\
X & tI_n+P^{-1}K(M) & C_2(M) \\
0 & (PX)^T & t+\lambda(M)
\end{bmatrix},$$
then a tedious but straightforward computation reveals that
$$(SA_1 S^{-1})^T=\begin{bmatrix}
t+\lambda(M) & (PC_2(M))^T+c\,(PX)^T & c(t+\lambda(M)) \\
X & t I_n+P^{-1}K(M) & Y+c\, X \\
0 & (PX)^T & 0
\end{bmatrix}$$
(this uses the observation that $S^{-1}=\begin{bmatrix}
c & [0]_{1 \times n} & 1 \\
[0]_{n \times 1} & P^{-1} & [0]_{n \times 1} \\
1 & [0]_{1 \times n} & 0
\end{bmatrix}$ and that $P$ and $K(M)$ are symmetric).

Using the column operation $C_{n+2} \leftarrow C_{n+2}-c\,C_1$ leads to
$$\det \bigl((SA_1 S^{-1})^T\bigr)=\begin{vmatrix}
t+\lambda(M) & (PC_2(M))^T+c\,(PX)^T & 0 \\
X & t I_n+P^{-1}K(M) & Y \\
0 & (PX)^T & 0
\end{vmatrix}$$
and it follows that $B_1(t)=0$.

We conclude that $B_2(t)=0$. In other words, the matrix
$$\begin{bmatrix}
0 & (PY)^T & 0 \\
X & tI_n+P^{-1}K(M) & Y \\
0 & (PX)^T & 0
\end{bmatrix}.$$
is singular. Yet, $tI_n+P^{-1}K(M)$ is invertible as a matrix with entries in $\F(t)$ (because its determinant is the characteristic polynomial of $P^{-1}K(M)$), so by Gaussian elimination we deduce that
the $2$ by $2$ matrix
$$\begin{bmatrix}
(PX)^T \\
(PY)^T
\end{bmatrix} (tI_n+P^{-1}K(M))^{-1} \begin{bmatrix}
X & Y
\end{bmatrix}$$
is singular. Hence, by factoring $(tI_n+P^{-1}K(M))^{-1}$ by $t^{-1}$ and then multiplying with $\det(I_n+t^{-1} P^{-1}K(M))$ we obtain that
the matrix
$$\begin{bmatrix}
(PX)^T \\
(PY)^T
\end{bmatrix} (I_n+t^{-1} P^{-1}K(M))^\ad \begin{bmatrix}
X & Y
\end{bmatrix}$$
is also singular.
It follows that the polynomial mapping
$$f : u \in \overline{\F} \mapsto \det\left(\begin{bmatrix}
(PX)^T \\
(PY)^T
\end{bmatrix} \bigl(I_n+u P^{-1}K(M)\bigr)^\ad \begin{bmatrix}
X & Y
\end{bmatrix}\right)$$
vanishes at every non-zero element of $\overline{\F}$. Since $\overline{\F}$ is infinite
we deduce that $f$ is identically zero and in particular $f(0)=0$.
Hence the determinant of $\begin{bmatrix}
(PX)^T \\
(PY)^T
\end{bmatrix}\begin{bmatrix}
X & Y
\end{bmatrix}$ equals zero, which reads as follows because $P$ is symmetric:
$$\bigl(X^TPX\bigr)\bigl(Y^TPY\bigr)
=\bigl(X^TPY\bigr)^2.$$
We recall that $\Delta(P)^{1/2}$ denotes the column whose entries are the square roots in $\overline{\F}$ of the diagonal entries of $P$.

Since $\F$ has characteristic $2$ the above identity can be rephrased as follows:
$$(X^T \Delta(P)^{1/2})^2 ((\Delta(P)^{1/2})^T Y)^2=\bigl(X^TPY\bigr)^2$$
and
$$X^T \Delta(P)^{1/2} (\Delta(P)^{1/2})^T Y=X^TPY$$
and finally
$$X^T P_a Y=0.$$
We are about to conclude. Since both $u(x)$ and $y$ belong to $\{x\}^\bot$, we can split them
into $u(x)=\alpha_1\, x+z_1$ and $y=\alpha_2\, x+z_2$ for some $(\alpha_1,\alpha_2) \in \F^2$ and vectors $z_1,z_2$ of $\Vect(x,x')^\bot$.
The vectors $z_1$ and $z_2$ are represented in the basis $(f_1,\dots,f_n)$ by $X$ and $Y$, respectively. Hence,
the equality $X^T P_a Y=0$ means that $b_a(z_1,z_2)=0$. Since $b_a$ is alternating, we conclude that
$$b_a(u(x),y)=\alpha_1\, b_a(x,z_2)+\alpha_2\, b_a(x,z_1)+b_a(z_1,z_2)=\alpha_1\, b(x,z_2)+\alpha_2\, b(x,z_1)=0,$$
where we have used the fact that $Q(x)=0$ to obtain $b_a(x,z_1)=b(x,z_1)$ and $b_a(x,z_2)=b(x,z_2)$.
\end{proof}

\section{An upper-bound for the dimension: the special case when $\Ker Q$ is totally $b$-singular}\label{m=0Section}

Here, we prove points (b) and (c) of Theorem \ref{maintheo} in the special case when $\Ker Q$ is totally $b$-singular,
so that $\dim \Ker Q=\nu(b)$.

\begin{lemma}\label{SKer=Kerlemma}
Let $b$ be a non-degenerate symmetric bilinear form on a finite-dimensional vector space $V$, with attached quadratic form $Q$.
Assume that $\Ker Q$ is totally $b$-singular. Then $\Ker Q$ is stable under every nilpotent element of $\calS_b$.
\end{lemma}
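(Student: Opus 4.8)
The plan is to reduce the statement to Lemma \ref{stablesetimlemma} by a dimension count. First I would record the elementary observation that every totally $b$-singular subspace $X$ of $V$ is contained in $\Ker Q$: if $x \in X$ then $x \in X^\bot$ as well, so $Q(x)=b(x,x)=0$, i.e.\ $x \in \Ker Q$. Applying this to a totally $b$-singular subspace of maximal dimension yields $\nu(b) \leq \dim \Ker Q$. On the other hand, the hypothesis that $\Ker Q$ is itself totally $b$-singular gives $\dim \Ker Q \leq \nu(b)$ by the very definition of the Witt index. Hence $\dim \Ker Q = \nu(b)$.

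Next, I would fix a nilpotent $u \in \calS_b$ and invoke Lemma \ref{stablesetimlemma}: it provides a totally $b$-singular subspace $W$ of $V$ with $\dim W = \nu(b)$ that is stable under $u$. By the observation of the first paragraph, $W \subseteq \Ker Q$; since $\dim W = \nu(b) = \dim \Ker Q$, this forces $W = \Ker Q$. Therefore $\Ker Q$ is stable under $u$, which is exactly the claim.

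The whole weight of the argument is carried by Lemma \ref{stablesetimlemma} (the existence of a maximal totally singular subspace stable under a nilpotent $b$-symmetric endomorphism), which is the genuinely nontrivial input here and is cited from \cite{dSPStructured2}; granting it, the present statement is a one-line dimension count. The only point that warrants care is that the equality $\dim \Ker Q = \nu(b)$ really does use the totally-singular hypothesis and would fail in general (when $m>0$ in the normal-basis notation of Section \ref{normalbasisSection}).
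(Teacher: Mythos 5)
Your proposal is correct and follows the paper's own argument exactly: invoke Lemma \ref{stablesetimlemma} to get a $u$-stable totally $b$-singular subspace of dimension $\nu(b)$, note that any totally singular subspace lies in $\Ker Q$, and conclude by the dimension count $\dim\Ker Q=\nu(b)$ forced by the hypothesis. Nothing to add.
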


\begin{proof}
Let $u \in \calS_b$ be nilpotent.
By Lemma \ref{stablesetimlemma}, there is a totally $b$-singular subspace $F$ of $V$ with dimension $\nu(b)$
that is stable under $u$. Then, $F \subset \Ker Q$.
Since $\Ker Q$ is totally $b$-singular, the definition of $\nu(b)$ shows that $F=\Ker Q$, and the claimed result ensues.
\end{proof}

\begin{prop}\label{SKer=KerProp}
Let $b$ be a non-degenerate symmetric bilinear form on a finite-dimensional vector space $V$, with attached quadratic form $Q$.
Assume that $\Ker Q$ is totally $b$-singular. Set $\nu:=\dim \Ker Q$.
Let $\calV$ be a nilpotent linear subspace of $\calS_b$ (respectively, of $\calA_b$).
Then $\dim \calV \leq \nu(n-\nu)$ (respectively, $\dim \calV \leq \nu(n-\nu-1)$).
\end{prop}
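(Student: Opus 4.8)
The plan is to reduce the bound to Gerstenhaber's theorem by forcing every element of $\calV$ to be block-triangular, with small blocks, in a well-chosen basis. Throughout write $K:=\Ker Q$; by hypothesis $K$ is totally $b$-singular, so $\dim K=\nu$ and $K\subseteq K^\bot$, while $\dim K^\bot=n-\nu$ since $b$ is non-degenerate.

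First I would exhibit an invariant flag. By Lemma~\ref{SKer=Kerlemma}, $K$ is stable under every $u\in\calV$; since $u$ is $b$-symmetric, Lemma~\ref{lemmakerim}(a) then shows that $K^\bot$ is stable under $u$ as well, so every $u\in\calV$ stabilises the flag $\{0\}\subset K\subset K^\bot\subset V$. Next, $b$ induces a symmetric bilinear form $\overline b$ on $K^\bot/K$ whose attached quadratic form $\overline Q$ is non-isotropic: if $x\in K^\bot$ lifts $\bar x\in K^\bot/K$ and $\overline Q(\bar x)=0$, then $Q(x)=0$, i.e.\ $x\in\Ker Q=K$, i.e.\ $\bar x=0$; in particular $\overline b$ is non-degenerate. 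For $u\in\calV$, the induced endomorphism $\overline u$ of $K^\bot/K$ is $\overline b$-symmetric (because $u$ is $b$-symmetric and respects both $K$ and $K^\bot$) and nilpotent, so Lemma~\ref{lemmanonisotropic} yields $\overline u=0$. Hence $u(K^\bot)\subseteq K$ for every $u\in\calV$; this is the key extra triangularity.

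Next I would pass to matrices. Pick a complement $U$ of $K$ in $K^\bot$ and a complement $W$ of $K^\bot$ in $V$, and let $\bfB$ be a basis of $V$ obtained by concatenating bases of $K$, $U$ and $W$, of respective sizes $\nu$, $d:=n-2\nu$ and $\nu$. Since $K\perp K^\bot$, and since $b$ pairs $K$ perfectly with $V/K^\bot$ while $\overline b$ is non-degenerate, one gets
$$\Mat_\bfB(b)=\begin{bmatrix}0&0&M\\ 0&G&N\\ M^T&N^T&H\end{bmatrix}\ \text{with $M,G$ invertible,}\qquad\Mat_\bfB(u)=\begin{bmatrix}A&P&R\\ 0&0&S\\ 0&0&C\end{bmatrix}\ \ (u\in\calV),$$
the shape of $\Mat_\bfB(u)$ being forced by the inclusions $u(K)\subseteq K$ and $u(K^\bot)\subseteq K$ obtained above. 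Computing $\Mat_\bfB(b)\,\Mat_\bfB(u)$, I would read off that $u$ is $b$-symmetric iff $C=M^{-1}A^TM$, $GS+NC=P^TM$ and $M^TR+N^TS+HC$ is symmetric, and that $u$ is $b$-alternating iff, in addition, $M^TR+N^TS+HC$ is alternating (the remaining diagonal blocks of $\Mat_\bfB(b)\,\Mat_\bfB(u)$ vanish automatically).

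Finally I would count dimensions. Let $\calV_0$ be the kernel of the linear map $u\in\calV\mapsto(A,P)$. For $u\in\calV_0$ the relations above give $C=0$ and $S=0$, so $u$ is determined by $R$ alone, subject to $M^TR$ being symmetric (resp.\ alternating); as $R\mapsto M^TR$ is bijective this gives $\dim\calV_0\le\dbinom{\nu+1}{2}$ (resp.\ $\dbinom{\nu}{2}$). On the other hand, the image of $u\mapsto(A,P)$ projects, through its first coordinate, onto $\{u|_K:u\in\calV\}$, a nilpotent linear subspace of $\End(K)$, hence of dimension at most $\dbinom{\nu}{2}$ by Gerstenhaber's theorem; so that image has dimension at most $\dbinom{\nu}{2}+\nu d$. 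Adding these contributions,
$$\dim\calV\le\dbinom{\nu+1}{2}+\dbinom{\nu}{2}+\nu d=\nu^2+\nu d=\nu(n-\nu)$$
in the $b$-symmetric case, and similarly $\dim\calV\le\dbinom{\nu}{2}+\dbinom{\nu}{2}+\nu d=\nu(n-\nu-1)$ in the $b$-alternating case, as claimed. The only step that is not routine bookkeeping is the first one: exhibiting the flag and, via the non-isotropy of $\overline b$, invoking Lemma~\ref{lemmanonisotropic} to collapse $K^\bot$ into $K$. Granting that, the rest is elementary linear algebra plus a single appeal to Gerstenhaber's theorem.
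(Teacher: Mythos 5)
Your proposal is correct and follows essentially the same route as the paper's proof: stabilise the flag $\{0\}\subset\Ker Q\subset(\Ker Q)^\bot\subset V$ via Lemmas \ref{SKer=Kerlemma} and \ref{lemmakerim}, collapse $(\Ker Q)^\bot$ into $\Ker Q$ using the non-isotropy of the induced form and Lemma \ref{lemmanonisotropic}, then block-triangularise and apply Gerstenhaber's theorem to the action on $\Ker Q$. The only differences are cosmetic: the paper normalises the basis so that the Gram matrix has identity off-diagonal blocks, whereas you keep a general Gram matrix and organise the count via the kernel of $u\mapsto(A,P)$; both yield the same bound.
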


\begin{proof}
Let $u \in \calV$. Since $u$ is nilpotent, Lemma \ref{stablesetimlemma} shows that it stabilizes $\Ker Q$, and hence
Lemma \ref{lemmakerim} shows that $u$ also stabilizes $(\Ker Q)^\bot$. Since $\Ker Q \subset (\Ker Q)^\bot$,
the symmetric bilinear form $b$ induces a symmetric bilinear form $\overline{b}$ on the quotient space $E:=(\Ker Q)^\bot/\Ker Q$,
and obviously $\overline{b}$ is non-isotropic. Moreover, $u$ induces a $\overline{b}$-symmetric endomorphism $\overline{u}$ of $E$.
By Lemma \ref{lemmanonisotropic}, we find $\overline{u}=0$, i.e.\ $u$ maps $(\Ker Q)^\bot$ into $\Ker Q$.

Now, let us choose a basis $(e_1,\dots,e_\nu)$ of $\Ker Q$, together with a subspace $H$ which is complementary to $(\Ker Q)^\bot$ in $V$.
Since $b$ is non-degenerate, $\dim H=\dim \Ker Q=\nu$
and we may recover a basis $(f_1,\dots,f_\nu)$ of $H$ such that $b(e_i,f_j)=\delta_{i,j}$ for all $(i,j) \in \lcro 1,\nu\rcro^2$.
Then, $b$ induces a non-degenerate symmetric bilinear form on $\Ker Q \oplus H$, so that
$V=(\Ker Q \oplus H) \botoplus (\Ker Q \oplus H)^\bot$. Let us choose a basis
$(g_1,\dots,g_{n-2\nu})$ of $(\Ker Q \oplus H)^\bot$.
Then, $\bfB:=(e_1,\dots,e_\nu,g_1,\dots,g_{n-2\nu},f_1,\dots,f_\nu)$ is a basis of $V$, and the matrix $S$ of
$b$ with respect to that basis looks as follows:
$$S=\begin{bmatrix}
0_\nu & [0]_{\nu \times (n-2\nu)} & I_\nu \\
[0]_{(n-2\nu) \times \nu} & P & [0]_{(n-2\nu) \times \nu} \\
I_\nu & [0]_{\nu \times (n-2\nu)} & D
\end{bmatrix}$$
for some $P \in \GL_{n-2\nu}(\F) \cap \Mats_{n-2\nu}(\F)$ and some symmetric matrix $D \in \Mats_\nu(\F)$.

Denote by $\calM$ the space of all matrices that represent the elements of $\calV$ with respect to $\bfB$.
Since every element of $\calV$ maps $(\Ker Q)^\bot$ into $\Ker Q$, every element $M$ of $\calM$ has the following form
$$M=\begin{bmatrix}
A(M) & B(M) & E(M) \\
[0]_{(n-2\nu) \times \nu} & 0_{n-2\nu} & C(M) \\
0_\nu & [0]_{\nu \times (n-2\nu)} & F(M)
\end{bmatrix}.$$
Moreover, given such a matrix $M$, the fact that $M$ is $S$-symmetric (respectively, $S$-alternating)
yields that $F(M)=A(M)^T$, $B(M)=(PC(M))^T$ and $E(M)+DF(M)$ is symmetric (respectively, alternating).
Hence, we find linear mappings $A : \calM \rightarrow \Mat_\nu(\F)$, $C : \calM \rightarrow \Mat_{n-2\nu,\nu}(\F)$
and $G : \calM \rightarrow \Mats_\nu(\F)$ (respectively, $G : \calM \rightarrow \Mata_\nu(\F)$)
such that every matrix $M$ of $\calM$ has the form
$$M=\begin{bmatrix}
A(M) & (P C(M))^T & D A(M)^T+G(M) \\
[0]_{(n-2\nu) \times \nu} & 0_{n-2\nu} & C(M) \\
0_\nu & [0]_{\nu \times (n-2\nu)} & A(M)^T
\end{bmatrix}.$$
We deduce the injectivity of the mapping
$$M \in \calM \mapsto (A(M),C(M),G(M)) \in A(\calM) \times \Mat_{n-2\nu,\nu}(\F) \times \Mats_\nu(\F)$$
(respectively, of the mapping
$$M \in \calM \mapsto (A(M),C(M),G(M)) \in A(\calM) \times \Mat_{n-2\nu,\nu}(\F) \times \Mata_\nu(\F)\; \bigr).$$
Moreover, for all $M \in \calM$, since $M$ is nilpotent and block-upper-triangular we obtain that $A(M)$ is nilpotent. It follows that
$A(\calM)$ is a nilpotent linear subspace of $\Mat_\nu(\F)$. Hence, the standard Gerstenhaber theorem yields
$$\dim A(\calM) \leq \dbinom{\nu}{2}.$$
It follows that
$$\dim \calV=\dim \calM \leq \dim \calA(\calM)+\dim \Mat_{n-2\nu,\nu}(\F)+\dim \Mats_\nu(\F)
=\nu^2+(n-2\nu)\nu=\nu(n-\nu)$$
(respectively,
$$\dim \calV=\dim \calM \leq \dim \calA(\calM)+\dim \Mat_{n-2\nu,\nu}(\F)+\dim \Mata_\nu(\F)
=\nu^2-\nu+(n-2\nu)\nu=\nu(n-\nu-1).)$$
\end{proof}

\section{Nilpotent subspaces of $b$-alternating endomorphisms}\label{proofalternatingSection}

Here, we prove points (b) and (c) of Theorem \ref{maintheo}. The proof is by induction on the dimension of $V$,
and its main key is the second orthogonality lemma for tensors (Lemma \ref{tensorortho2}).

So, let $\F$ be a field with characteristic $2$, let $V$ be a finite-dimensional vector space over $\F$,
and let $b$ be a non-degenerate symmetric bilinear form on $V$, whose associated quadratic form we denote by $Q$. Let $\calV$ be a nilpotent linear subspace of $\calA_b$.

Set $n:=\dim V$ and $\nu:=\nu(b)$. If $\nu=0$ then $\calV=\{0\}$ by Lemma \ref{lemmanonisotropic}, and hence $\dim \calV \leq \nu(n-\nu-1)$.
If $n=2\nu+1$ and $\Ker Q$ is totally $b$-singular, then $\SKer Q=\Ker Q$ has dimension $\nu$ and hence
$\nu(n-\nu)-\dim \SKer Q=\nu(n-\nu-1)$. Hence, in that case the result follows directly from Proposition \ref{SKer=KerProp}.

In the remainder of the proof, we discard the special case when $n=2\nu+1$ and $\Ker Q$ is totally $b$-singular, and we assume that $\nu>0$.
Since $\nu>0$ we can choose $x \in V \setminus \{0\}$ such that $Q(x)=0$.
Set
$$\calV x:=\{u(x) \mid u \in \calV\}.$$
We consider the kernel
$$\calU_{\calV,x}:=\{u \in \calV : u(x)=0\}$$
of the surjective linear mapping
$$u \in \calV \mapsto u(x) \in \calV x.$$
Each $u \in \calU_{\calV,x}$ stabilizes $\{x\}^\bot$ (because it stabilizes $\F x$)
and hence induces a nilpotent endomorphism $\overline{u}$ of the quotient space $\{x\}^\bot/\F x$.
Note that $b$ induces a non-degenerate symmetric bilinear form $\overline{b}$ on $\{x\}^\bot/\F x$.
By Lemma \ref{quotientindexlemma}, the Witt index of $\overline{b}$ equals $\nu-1$.
It is then easily checked that, for every $u \in \calU_{\calV,x}$, the endomorphism $\overline{u}$ is still $\overline{b}$-alternating.
Hence,
$$\calV \modu x:=\{\overline{u} \mid u \in \calU_{\calV,x}\}$$
is a nilpotent linear subspace of $\calA_{\overline{b}}$.
Finally, the kernel of the linear mapping
$$\varphi : u \in \calU_{\calV,x} \mapsto \overline{u} \in \calV \modu x$$
consists of the operators $u \in \calV$ that vanish at $x$ and map $\{x\}^\bot$ into $\F x$.
Set
$$L_{\calV,x}:=\bigl\{y \in V : \; x \wedge_b y \in \calV\bigr\}$$
By Proposition \ref{caracalternatingtensors}, $L_{\calV,x}$ is a linear subspace of $\{x\}^\bot$
that contains $x$, the kernel of $\varphi$ equals $x \wedge_b L_{\calV,x}$ and
$$\dim \Ker \varphi=\dim (L_{\calV,x})-1.$$

By applying the rank theorem twice, we obtain
$$\dim \calV=\dim (\calV x)+\dim L_{\calV,x}-1+\dim (\calV \modu x).$$
Since $\calV$ is a space of $b$-alternating endomorphisms, we have $\calV x \subset \{x\}^\bot$.
Hence, by the second orthogonality lemma for tensors, we obtain that
$\calV x$ is $b_a$-orthogonal to $L_{\calV,x}$. Now,
denote by
$$\pi : \{x\}^\bot \rightarrow \{x\}^\bot/\F x$$
the canonical projection.
Note that $b_a(x,z)=b(x,z)+\sqrt{Q(x)Q(z)}=0+0=0$ for all $z \in \{x\}^\bot$.
It is clear from there that $b_a$ induces a symmetric bilinear form on $\{x\}^\bot/\F x$,
and this form is no other than the a-transform of $\overline{b}$!
Since $\calV x$ is $b_a$-orthogonal to $L_{\calV,x}$, we obtain that
$\pi(\calV x)$ is $\overline{b}_a$-orthogonal to $\pi(L_{\calV,x})$.

As $x \in L_{\calV,x}$, we have
$$\dim \bigl(\pi(L_{\calV,x})\bigr)=\dim L_{\calV,x}-1.$$
Besides, $x \not\in \calV x$ because every element of $\calV$ is nilpotent,
and hence
$$\dim \bigl(\pi(\calV x)\bigr)=\dim (\calV x).$$
Therefore,
$$\dim \calV=\dim \bigl(\pi(\calV x)\bigr)+\dim(\pi(L_{\calV,x}))+\dim (\calV \modu x).$$
From here, we split the discussion into two cases.

\vskip 3mm
\noindent \textbf{Case 1: $n \neq 2\nu+1$.} \\
Then $n-2 \neq 2(\nu-1)+1$. Hence, on the one hand the induction hypothesis yields
$$\dim (\calV \modu x) \leq (\nu-1)\bigl((n-2)-(\nu-1)-1\bigr),$$
and on the other hand Lemma \ref{atransformlemma} yields
$$\dim \bigl(\pi(\calV x)\bigr)+\dim(\pi(L_{\calV,x})) \leq n-2.$$
Hence,
$$\dim \calV \leq (\nu-1)\bigl((n-2)-(\nu-1)-1\bigr)+n-2=\nu(n-\nu-1).$$

\vskip 3mm
\noindent \textbf{Case 2: $n=2\nu+1$.} \\
Then, $\Ker Q \not\subset (\Ker Q)^\bot$ because of our starting assumptions.
In that case, we choose $x$ in $\Ker Q \setminus (\Ker Q)^\bot$.

Denote by $\overline{Q}$ the quadratic form associated with $\overline{b}$.
Because $x \in \Ker Q$, we have $\SKer Q \subset \{x\}^\bot$. We claim that
$\pi(\SKer Q) \subset \SKer \overline{Q}$
(the converse inclusion also holds but we will not need it).
Let $z \in \SKer Q$. Firstly, $\overline{Q}(\pi(z))=Q(z)=0$.
Next, let $s\in \Ker \overline{Q}$, so that $s=\pi(z')$ for some $z' \in \Ker Q$.
Then, $b(z,z')=0$ leads to $\overline{b}(\pi(z),\pi(z'))=0$, and hence $\pi(\SKer Q)$ is $\overline{b}$-orthogonal to $\Ker \overline{Q}$.
This yields the claimed inclusion
$$\pi(\SKer Q) \subset \SKer \overline{Q}.$$

Moreover, since $x \not\in \SKer Q$ we have $\dim (\pi(\SKer Q))=\dim \SKer Q$ and hence
$$\dim (\SKer \overline{Q}) \geq \dim (\SKer Q).$$

Now, as $\dim(\{x\}^\bot/\F x)=n-2=2(\nu-1)+1$ and $\overline{b}$ has Witt index $\nu-1$, we have by induction
$$\dim (\calV \modu x) \leq (\nu-1)\bigl((n-2)-(\nu-1)\bigr)-\dim (\SKer \overline{Q})
\leq (\nu-1)(n-\nu-1)-\dim(\SKer Q).$$
Moreover, the third conclusion in Lemma \ref{atransformlemma} shows that
$$\dim  \bigl(\pi(\calV x)\bigr)+\dim\bigl(\pi(L_{\calV,x})\bigr) \leq n-1.$$
Hence,
$$\dim \calV \leq (\nu-1)(n-\nu-1)+n-1-\dim(\SKer Q)=\nu(n-\nu)-\dim(\SKer Q).$$
This completes the proof.

\section{Nilpotent subspaces of $b$-symmetric endomorphisms}\label{proofsymmetricSection}

Here, we complete our study by proving point (a) of Theorem \ref{maintheo}.
The proof strategy is quite similar to the one of points (b) and (c) of the same theorem, with additional technicalities
however. Remember that $\F$ is assumed to have characteristic $2$. Before we can prove the result, we need
a small trick that allows us to reduce the situation where $n=2\nu(b)+1$ to the convenient one where, in addition, $\F$ is perfect
(i.e.\ $x \in \F \mapsto x^2 \in \F$ is surjective).
This is explained in the next paragraph. The proof \emph{per se} is carried out afterwards.

\subsection{Extending scalars}

\begin{lemma}\label{perfectoddlemma}
Let $b$ be a non-degenerate symmetric bilinear form on a finite-dimensional vector space $V$.
Assume that $\F$ is perfect and that $\dim V$ is odd. Then, the Witt index of $b$ equals $\nu:=\frac{\dim V-1}{2}$, and
$b$ is represented in some basis by
$I_1 \oplus \begin{bmatrix}
0_\nu & I_\nu \\
I_\nu & 0_\nu
\end{bmatrix}$.
\end{lemma}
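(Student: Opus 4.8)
The plan is to specialize the normal-form theory of Section~\ref{normalbasisSection} to the perfect case. First I would invoke the existence theorem for normal bases (from \cite{dSPinvitquad}) to fix a normal basis $(e_1,\dots,e_n)$ for $b$, together with non-negative integers $p,q,m$ satisfying $p+2q+2m=n$ and scalars $a_1,\dots,a_{p+q}$ whose square roots $\sqrt{a_1},\dots,\sqrt{a_{p+q}}$ are linearly independent in the $\F$-vector space $\overline{\F}$, and with
$$\Mat_{(e_1,\dots,e_n)}(b)=\Diag(a_1,\dots,a_p) \oplus \begin{bmatrix} 0_q & I_q \\ I_q & \Diag(a_{p+1},\dots,a_{p+q}) \end{bmatrix} \oplus \begin{bmatrix} 0_m & I_m \\ I_m & 0_m \end{bmatrix}.$$

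The crux is an elementary remark. Since $\F$ is perfect, $\sqrt{a_i}$ lies in $\F$ for every $i \in \lcro 1,p+q\rcro$; but a linearly independent family consists of nonzero vectors, and $\F$, seen as an $\F$-vector space, has dimension $1$, so a linearly independent family of elements of $\F$ has at most one member. Hence $p+q \leq 1$, and in particular every $a_i$ with $i \leq p+q$ is nonzero. On the other hand $n=p+2q+2m$ is odd, so $p$ is odd, hence $p \geq 1$. Therefore $p=1$ and $q=0$.

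It then remains to read off the two conclusions. Recall from Section~\ref{normalbasisSection} that $\nu(b)=q+m$; combined with $n=p+2q+2m=1+2m$ this gives $\nu(b)=m=\frac{n-1}{2}=\nu$. Moreover, with $p=1$ and $q=0$ the displayed matrix becomes $a_1 I_1 \oplus \begin{bmatrix} 0_\nu & I_\nu \\ I_\nu & 0_\nu \end{bmatrix}$, where $a_1 \neq 0$. Writing $a_1=c^2$ with $c \in \F\setminus\{0\}$, which is possible because $\F$ is perfect, and replacing $e_1$ by $c^{-1}e_1$, I would obtain a basis in which $b$ is represented by $I_1 \oplus \begin{bmatrix} 0_\nu & I_\nu \\ I_\nu & 0_\nu \end{bmatrix}$.

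There is no genuine obstacle here: the statement is essentially the restriction of the normal-basis classification to perfect fields, the only point requiring attention being the elementary observation that a linearly independent family inside the one-dimensional $\F$-space $\F$ can have at most one element, which forces the diagonal block of the normal form to shrink to a single (necessarily nonzero, hence rescalable) entry.
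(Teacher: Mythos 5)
Your proposal is correct and follows essentially the same route as the paper: take a normal basis with indices $p,q,m$, use perfectness to force $p+q\leq 1$ (since the $\sqrt{a_i}$ then lie in the one-dimensional $\F$-space $\F$), use the oddness of $n$ to get $p=1$ and $q=0$, conclude $\nu(b)=m=\frac{n-1}{2}$, and rescale $e_1$ by $\sqrt{b(e_1,e_1)}^{-1}$. No issues.
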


\begin{proof}
Let us take a normal basis $(e_1,\dots,e_n)$ for $b$, with associated indices $p,q,m$.
Since $n=\dim V$ is odd we get that $p$ is odd, and in particular $p>0$. Since $\F$ is perfect,
we have $\sqrt{x} \in \F$ for all $x \in \F$, and hence for every family $(b_1,\dots,b_N)$ of elements of $\F$
for which $\sqrt{b_1},\dots,\sqrt{b_N}$ are linearly independent over $\F$, we must have $N \leq 1$.
It follows that $p+q \leq 1$, leading to $p=1$ and $q=0$, and hence $m=\frac{n-1}{2}=\nu$. Hence, the Witt index of $b$ equals $q+m=\nu$.
Finally, the basis $(\sqrt{b(e_1,e_1)}^{-1} e_1,e_2,\dots,e_n)$ satisfies the claimed property.
\end{proof}

\begin{lemma}\label{perfectlemma}
Let $b$ be a non-degenerate symmetric bilinear form on a finite-dimensional vector space $V$ over $\F$.
Let $\calV$ be a nilpotent linear subspace of $\calS_b$. Then, there exist a perfect field $\F'$
with characteristic $2$, a vector space $V'$ over $\F'$ such that $\dim_\F V=\dim_{\F'} V'$,
a non-degenerate symmetric bilinear form $b'$ on $V'$, and
a nilpotent linear subspace $\calV'$ of $\calS_{b'}$ such that $\dim_\F \calV=\dim_{\F'} \calV'$.
\end{lemma}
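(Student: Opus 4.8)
The plan is to perform a scalar extension to the perfect closure. Let $\F'$ denote the perfect closure of $\F$ inside its algebraic closure $\overline{\F}$, that is $\F'=\bigcup_{k\geq 0} \F^{1/2^k}$; this is a purely inseparable algebraic extension of $\F$, it has characteristic $2$, and it is perfect. Set $V':=\F'\otimes_\F V$, let $b'$ be the $\F'$-bilinear extension of $b$, and let $\calV'$ be the $\F'$-linear span of the image of $\calV$ under the canonical map $\End_\F(V)\to\End_{\F'}(V')$, $u\mapsto \id_{\F'}\otimes u=:u'$. Clearly $\dim_{\F'}V'=\dim_\F V$, and $b'$ is symmetric; it is non-degenerate because the Gram matrix of $b'$ in a basis of $V'$ coming from a basis of $V$ is the same as the Gram matrix of $b$, which is invertible over $\F$ hence over $\F'$. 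Each $u'$ is $b'$-symmetric since $b'(x,u'(y))$ has, in such a basis, the matrix $\Mat_\bfB(b)\,\Mat_\bfB(u)$, which is symmetric.

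Next I would check the two quantitative claims. For the dimension, if $(u_1,\dots,u_r)$ is an $\F$-basis of $\calV$, then $(u_1',\dots,u_r')$ spans $\calV'$ over $\F'$ by construction, and it is $\F'$-linearly independent: a relation $\sum_i \lambda_i u_i'=0$ with $\lambda_i\in\F'$ means $\sum_i \lambda_i \Mat_\bfB(u_i)=0$ in $\Mat_n(\F')$, and since the $\Mat_\bfB(u_i)$ are $\F$-linearly independent in $\Mat_n(\F)$ and $\F'$ is a field extension of $\F$, linear independence is preserved (equivalently, extract a maximal $\F$-independent subset of the entries and use that the corresponding minor persists). Hence $\dim_{\F'}\calV'=r=\dim_\F\calV$. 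For nilpotency, every element of $\calV'$ is of the form $w=\sum_i \lambda_i u_i'$ with $\lambda_i\in\F'$; since $\F'/\F$ is algebraic, the $\lambda_i$ all lie in a common finite extension, and one can write $w=v'$ where $v=\sum_i \mu_i u_i$ for suitable... no, this does not quite work over $\F$. Instead I would argue directly: $\calV'$ is a nilpotent subspace because it is the $\F'$-span of a nilpotent subspace — more precisely, the set of nilpotent matrices in $\Mat_n(\overline{\F})$ is Zariski-closed and stable under the relevant operations, but the cleanest route is: $\calV'=\F'\otimes_\F\calV$ inside $\Mat_n(\F')$, and a subspace spanned over an extension field by a nilpotent subspace is again nilpotent. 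This last fact follows because nilpotency of a linear subspace $\calW$ of $\Mat_n$ over any field is equivalent to the identity "$\det(t I_n - M)=t^n$ for all $M\in\calW$", which by multilinearity of the determinant in the coordinates of $M$ is equivalent to the vanishing of finitely many polynomials in the structure constants of a basis of $\calW$; these polynomials have coefficients in the prime field and vanish for $\calV$, hence for $\calV'$, since a basis of $\calV$ over $\F$ is a basis of $\calV'$ over $\F'$.

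Concretely I would phrase the nilpotency step as: fix an $\F$-basis $(u_1,\dots,u_r)$ of $\calV$ and consider the polynomial $P(X_1,\dots,X_r,t):=\det\!\bigl(tI_n-\sum_{i=1}^r X_i\,\Mat_\bfB(u_i)\bigr)-t^n\in\F[X_1,\dots,X_r,t]$. Since $\calV$ is nilpotent, $P$ vanishes identically as a polynomial function on $\F^r$ for each value of $t$; but a nilpotent matrix has characteristic polynomial $t^n$, so in fact $P$ vanishes on $\F^r$, and because $\F$ is infinite (it has characteristic $2$ and... actually $\F$ need not be infinite). To avoid the cardinality issue, note that $P$ vanishes \emph{as a polynomial}: indeed, for each $M\in\calV$ nilpotent we have $\det(tI_n-M)=t^n$ exactly, so $P(\lambda_1,\dots,\lambda_r,t)=0$ in $\F[t]$ for every $(\lambda_1,\dots,\lambda_r)\in\F^r$; if $\F$ is infinite this forces $P=0$, and if $\F$ is finite one instead argues that nilpotency of $\calV$ already implies, by the Gerstenhaber-type bound or directly, that $\calV$ consists of simultaneously strictly-triangularizable... this is delicate. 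The robust fix is to replace this elementary argument by the observation that $P$ vanishes identically: a nilpotent \emph{linear space} $\calV$ satisfies $MN$ nilpotent and more — actually the correct clean statement is that the reduced subscheme cut out by $P$ contains $\mathbb{A}^r$, hence $P$ lies in the nilradical of $\F[X_1,\dots,X_r,t]$, which is trivial, so $P=0$. Once $P=0$ as a polynomial with coefficients in $\F$, it remains zero over $\F'$, which means precisely that every $\F'$-linear combination $\sum_i \lambda_i \Mat_\bfB(u_i)$ has characteristic polynomial $t^n$, i.e.\ is nilpotent; hence $\calV'$ is nilpotent.

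The main obstacle is this nilpotency-descent point over possibly finite fields: one must be careful that "vanishes on all $\F$-points" truly upgrades to "vanishes as a polynomial," and the safe way is to invoke the finite-field analogue of Gerstenhaber's theorem (cited in the paper as Serezhkin's result) which guarantees that a nilpotent linear subspace of matrices over \emph{any} field is simultaneously similar to a space of strictly upper-triangular matrices — after conjugating $\calV$ into $\NT_n(\F)$, every $\F'$-combination of its elements is strictly upper-triangular in the same basis of $V'$, hence nilpotent, and the conclusion is immediate. Everything else — non-degeneracy of $b'$, $b'$-symmetry of each $u'$, and the equality of dimensions — is routine linear algebra over a field extension. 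I would set $\calV':=\F'\otimes_\F\calV$ and $V':=\F'\otimes_\F V$, record that $\dim_{\F'}V'=\dim_\F V$ and $\dim_{\F'}\calV'=\dim_\F\calV$, note $\F'$ is perfect of characteristic $2$, and check the three structural properties as above.
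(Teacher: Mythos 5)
Your overall strategy (extend scalars to a perfect extension of $\F$ and transport everything through matrices) is the same as the paper's, and the routine parts — non-degeneracy of $b'$, $b'$-symmetry of the extended operators, and the equality of dimensions — are fine. The genuine gap is in the nilpotency step when $\F$ is finite, and both of the fixes you propose there are incorrect. First, the ``reduced subscheme / nilradical'' argument does not work: over a finite field $\F$ a nonzero polynomial (such as $X^{|\F|}-X$) vanishes at every point of $\F^r$, so knowing that $P$ vanishes at all $\F$-rational points of affine space does not place $P$ in the nilradical of $\F[X_1,\dots,X_r,t]$. Second, the appeal to Gerstenhaber--Serezhkin is a misquotation: their theorem gives the maximal dimension $\binom{n}{2}$ of a nilpotent subspace (and identifies the spaces attaining that maximum), but it is \emph{false} that every nilpotent linear subspace of $\Mat_n(\F)$ is simultaneously similar to a subspace of $\NT_n(\F)$. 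For instance, in $\Mat_3(\F)$ with $\car\F=2$ the span of $E_{1,2}+E_{2,3}$ and $E_{2,1}+E_{3,2}$ is nilpotent (every element has characteristic polynomial $t^3$), yet the two spanning matrices have no common kernel vector, so no strictly triangularizing flag can exist.

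The repair is immediate, and it is exactly what the paper does: a finite field of characteristic $2$ is already perfect, so when $\F$ is finite you may take $(\F',V',b',\calV')=(\F,V,b,\calV)$ and there is nothing to prove — indeed your own choice of $\F'$ as the perfect closure already collapses to $\F$ in that case, which you did not exploit. The polynomial-identity argument ($P$ vanishes on $\F^r$, hence $P=0$, hence every $\F'$-combination has characteristic polynomial $t^n$) is then only needed when $\F$ is infinite, where it is valid. The paper takes $\F'=\overline{\F}$ rather than the perfect closure in the infinite case, but that difference is immaterial.
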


\begin{proof}
If $\F$ is finite then it is perfect and we simply take $(\F',V',b',\calV'):=(\F,V,b,\calV)$.

Assume now that $\F$ is infinite. Then, we take $\F':=\overline{\F}$.
We will use the matrix viewpoint. Set $n:=\dim V$ and $d:=\dim \calV$.
Choose a basis $\bfB$ of $V$. Denote by $S$ the matrix of $b$ with respect to $\bfB$, and by $\calM$
the $\F$-linear subspace of $\Mat_n(\F)$ associated with $\calV$ with respect to the basis $\calM$. It has dimension $d$.
Set $\calM':=\Vect_{\F'}(\calM)$, which is an $\F'$-linear subspace of $\Mat_n(\F')$
with dimension $d$ over $\F'$.
Since $S \calM \subset \Mats_n(\F)$, we have $S \calM' \subset \Mats_n(\F')$.
Finally, consider the space $V':=(\F')^n$ equipped with the non-degenerate symmetric $\F'$-bilinear form
$b'$ whose matrix with respect to the standard basis of $V'$ is $S$. Denote by $\calV'$ the set of all endomorphisms of $V'$
represented by the elements of $\calM'$ with respect to the standard basis. Then, $\calV' \subset \calS_{b'}$ and
$$\dim_{\F'} \calV'=\dim_{\F'} \calM'=\dim_\F \calM=\dim_\F \calV.$$
In order to conclude, it remains to prove that all the elements of $\calV'$ are nilpotent.
We know that every element of $\calM$ is nilpotent, and it suffices to prove that so is every element of $\calM'$.
The line of reasoning here is classical: let $(M_1,\dots,M_p)$ be an arbitrary list of elements of $\calM$.
The mapping
$$f : (x_1,\dots,x_p) \in (\F')^p \mapsto \biggl(\sum_{k=1}^p x_k M_k\biggr)^n$$
is a (vector-valued) polynomial function, and by assumption it vanishes everywhere on $\F^p$. Since $\F$ is infinite,
we deduce that $f$ is identically zero. It ensues that every element of $\calM'$ is nilpotent, and we conclude that
$\calV'$ is nilpotent.
\end{proof}

\subsection{The inductive proof}

Let $V$ be a finite-dimensional vector space over $\F$ and $b$ be a non-degenerate symmetric bilinear form on $V$ with attached quadratic form $Q$. Let $\calV$ be a nilpotent linear subspace of $\calS_b$. Set $n:=\dim V$ and $\nu:=\nu(b)$. If $\nu=0$ then $\calV=\{0\}$ by Lemma \ref{lemmanonisotropic}, and hence $\dim \calV \leq \nu(n-\nu)$.

Now, we assume that $\nu>0$. We use a \emph{reductio ad absurdum}: we assume that $\dim \calV>\nu(n-\nu)$, and we set out to find a contradiction.

Here is the first major result we wish to prove:

\begin{claim}\label{claim1}
One has $n=2\nu+1$. Moreover, $\calV$ contains  $x \wedge_b y$ for all $x \in \Ker Q$ and all $y \in (\Ker Q)^\bot$.
\end{claim}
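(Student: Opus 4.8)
The plan is to run, for $b$-symmetric operators, the same kind of inductive analysis as in Section \ref{proofalternatingSection}. Fix a non-zero vector $x\in\Ker Q$ — such a vector exists because $\nu>0$, and it is $b$-isotropic. Mirroring Section \ref{proofalternatingSection}, put $\calV x:=\{u(x):u\in\calV\}$, let $\calU_{\calV,x}:=\{u\in\calV:u(x)=0\}$ be the kernel of $u\mapsto u(x)$, and denote by $\overline{b}$ the non-degenerate symmetric form induced by $b$ on $\{x\}^\bot/\F x$, whose Witt index equals $\nu-1$ by Corollary \ref{quotientindexlemma} and whose quadratic form I write $\overline{Q}$. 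Each $u\in\calU_{\calV,x}$ induces a nilpotent $\overline{b}$-symmetric operator $\overline{u}$ on $\{x\}^\bot/\F x$, so one gets a nilpotent subspace $\calV\modu x\subseteq\calS_{\overline{b}}$ together with the surjection $\varphi : u\in\calU_{\calV,x}\mapsto\overline{u}$. Finally set $L_{\calV,x}:=\{y\in V:x\wedge_b y\in\calV\}$; since $\calV$ is nilpotent and $x$ is isotropic, this is a subspace of $\{x\}^\bot$ containing $x$.

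The first ingredient is a dimension count. By Proposition \ref{caracsymmetrictensors}, $\Ker\varphi$ is the set of $u\in\calV$ of the form $\alpha\,x\otimes_b x+x\wedge_b y$ with $\alpha\in\F$ and $y\in\{x\}^\bot$. The scalar $\alpha$ is well defined (as $x\otimes_b x$ has rank $1$ whereas $x\wedge_b y$ has rank $0$ or $2$), so splitting off this scalar gives $\dim\Ker\varphi=\dim L_{\calV,x}-1+\epsilon_x$ with $\epsilon_x\in\{0,1\}$, and $\epsilon_x=1$ exactly when $\calV$ contains an operator $x\otimes_b x+x\wedge_b y$. Two applications of the rank theorem then yield
\[
\dim\calV=\dim(\calV x)+\dim(\calV\modu x)+\dim L_{\calV,x}-1+\epsilon_x .
\]
Now set $W:=\calV x\cap\{x\}^\bot$. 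For $u\in\calV$ with $u(x)\in\{x\}^\bot$ and $y\in L_{\calV,x}$, Lemma \ref{tensorortho2} gives $b_a(u(x),y)=0$ (the case $y\in\F x$ being trivial, since $Q(x)=0$ and $u(x)\in\{x\}^\bot$), so $W$ is $b_a$-orthogonal to $L_{\calV,x}$. Since $b_a(x,\cdot)=0$ on $\{x\}^\bot$, the form $b_a$ descends to the a-transform $\overline{b}_a$ of $\overline{b}$ on $\{x\}^\bot/\F x$, whence $\pi(W)$ is $\overline{b}_a$-orthogonal to $\pi(L_{\calV,x})$ for $\pi:\{x\}^\bot\to\{x\}^\bot/\F x$ the projection; here $\dim\pi(W)=\dim W$ (because $x\notin\calV x$ by nilpotency) and $\dim\pi(L_{\calV,x})=\dim L_{\calV,x}-1$. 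The delicate point — and the one place the $b$-symmetric case truly departs from the $b$-alternating one — is that $\calV x$ need not lie inside $\{x\}^\bot$; however, if it does not, then Lemma \ref{tensorortho1} forces $\epsilon_x=0$ and $\dim W=\dim(\calV x)-1$, and if it does, then $W=\calV x$ and $\epsilon_x\le 1$. In both cases $\dim(\calV x)+\epsilon_x\le\dim W+1$, so the displayed identity becomes
\[
\dim\calV\le\dim\pi(W)+\dim\pi(L_{\calV,x})+1+\dim(\calV\modu x).
\]

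It then remains to split on the size of $\dim\pi(W)+\dim\pi(L_{\calV,x})$. If this is at most $n-2$, then the induction hypothesis (point (a) applied to $\overline{b}$ on the $(n-2)$-dimensional space $\{x\}^\bot/\F x$, of Witt index $\nu-1$) gives $\dim(\calV\modu x)\le(\nu-1)(n-\nu-1)$, whence $\dim\calV\le(n-2)+1+(\nu-1)(n-\nu-1)=\nu(n-\nu)$, against our standing assumption $\dim\calV>\nu(n-\nu)$. Hence $\dim\pi(W)+\dim\pi(L_{\calV,x})>n-2=\dim(\{x\}^\bot/\F x)$, so Lemma \ref{atransformlemma}, applied to $\overline{b}$, shows that $n-2$ is odd with $\nu(\overline{b})=\frac{n-3}{2}$ and that both $\pi(W)$ and $\pi(L_{\calV,x})$ contain $(\Ker\overline{Q})^{\bot_{\overline{b}}}$. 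Since $\nu(\overline{b})=\nu-1$, this forces $n=2\nu+1$, the first assertion of the claim. For the second assertion, $x\in\Ker Q$ gives $(\Ker Q)^{\bot}\subseteq\{x\}^\bot$ and, by a direct check, $\pi\bigl((\Ker Q)^{\bot}\bigr)\subseteq(\Ker\overline{Q})^{\bot_{\overline{b}}}\subseteq\pi(L_{\calV,x})$; as $\F x\subseteq L_{\calV,x}$, this forces $(\Ker Q)^{\bot}\subseteq L_{\calV,x}$, i.e.\ $x\wedge_b y\in\calV$ for all $y\in(\Ker Q)^{\bot}$. Applying the whole argument to each non-zero $x\in\Ker Q$ (the case $x=0$ being trivial) yields the second assertion. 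The main obstacle is precisely the one flagged above: because $\calV\subseteq\calS_b$ rather than $\calA_b$, the set $\calV x$ may escape $\{x\}^\bot$, and one must keep the rank-one summands $x\otimes_b x$ under control via Lemma \ref{tensorortho1} so that the inequality $\dim(\calV x)+\epsilon_x\le\dim W+1$ survives and no unit of dimension is wasted when Lemma \ref{tensorortho2} and Lemma \ref{atransformlemma} are combined.
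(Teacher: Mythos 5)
Your proposal is correct and follows essentially the same route as the paper's proof: the same decomposition $\dim \calV=\dim(\calV x)+\dim(\Ker\varphi)+\dim(\calV\modu x)$, the same use of Lemma \ref{tensorortho1} to control the rank-one summands (your $\epsilon_x$ is just the paper's $\dim L'_{\calV,x}-\dim L_{\calV,x}$), and the same combination of Lemma \ref{tensorortho2}, the induction hypothesis and Lemma \ref{atransformlemma} to force $n=2\nu+1$ and $(\Ker Q)^\bot\subseteq L_{\calV,x}$. The only differences are cosmetic (order in which the a-transform lemma is invoked).
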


\begin{proof}
Let $x \in V$ be isotropic and non-zero (such a vector actually exists because $\nu>0$).
Set
$$\calV x:=\{u(x) \mid u \in \calV\}.$$
We consider the kernel
$$\calU_{\calV,x}:=\{u \in \calV : u(x)=0\}$$
of the surjective linear mapping
$$u \in \calV \mapsto u(x) \in \calV x.$$
Each $u \in \calU_{\calV,x}$ stabilizes $\{x\}^\bot$ (because it stabilizes $\F x$)
and hence induces a nilpotent endomorphism $\overline{u}$ of the quotient space $\{x\}^\bot/\F x$.
Note that $b$ induces a non-degenerate symmetric bilinear form $\overline{b}$ on $\{x\}^\bot/\F x$
with Witt index $\nu-1$. We denote by $\overline{Q}$ the associated quadratic form.
For every $u \in \calU_{\calV,x}$, the endomorphism $\overline{u}$ is $\overline{b}$-symmetric.
Hence,
$$\calV \modu x:=\{\overline{u} \mid u \in \calU_{\calV,x}\}$$
is a nilpotent linear subspace of $\calS_{\overline{b}}$.
Finally, the kernel of the linear mapping
$$\varphi : u \in \calU_{\calV,x} \mapsto \overline{u} \in \calV \modu x$$
consists of the operators $u \in \calV$ that vanish at $x$ and map $\{x\}^\bot$ into $\F x$.
Set
$$L_{\calV,x}:=\bigl\{y \in V : \; x \wedge_b y \in \calV\bigr\}$$
and
$$L'_{\calV,x}:=\{(\alpha,y) \in \F \times V : \; \alpha\, x\otimes_b x+x \wedge_b y \in \calV\bigr\}.$$
By Proposition \ref{caracsymmetrictensors}, $L_{\calV,x}$ is a linear subspace of $\{x\}^\bot$
that contains $x$ and
$$\Ker \varphi=\bigl\{\alpha\, x\otimes_b x+x \wedge_b y \mid (\alpha,y) \in L'_{\calV,x} \bigr\}.$$
Moreover, we have seen that the kernel of $(\alpha,y) \in \F \times \{x\}^\bot \mapsto  \alpha\, x\otimes_b x+x \wedge_b y$
equals $\{0\} \times \F x$, which leads to
$$\dim(\Ker \varphi)=\dim L'_{\calV,x}-1.$$

By applying the rank theorem twice, we obtain
$$\dim \calV=\dim (\calV x)+\dim L'_{\calV,x}-1+\dim (\calV \modu x).$$
Next, we claim that
\begin{equation}\label{dimensioninequality}
\dim (\calV x)+\dim L'_{\calV,x} \leq \dim (\calV x \cap \{x\}^\bot)+\dim L_{\calV,x}+1.
\end{equation}
To see this, note first that the rank theorem yields
$\dim (\calV x)\leq \dim (\calV x\cap \{x\}^\bot)+1$ and
$\dim L'_{\calV,x} \leq \dim L_{\calV,x}+1$, and hence
$$\dim (\calV x)+\dim L'_{\calV,x} \leq  \dim (\calV x \cap \{x\}^\bot)+\dim L_{\calV,x}+2.$$
Moreover, if equality holds then $\dim (\calV x)= \dim (\calV x\cap \{x\}^\bot)+1$ and
$\dim L'_{\calV,x}=\dim L_{\calV,x}+1$, which yields an operator $u \in \calV$ such that $u(x)\not\in \{x\}^\bot$
and a non-zero scalar $\alpha$ and a vector $y \in \{x\}^\bot$ such that $\alpha\, x\otimes_b x+x \wedge_b y \in \calV$;
then the first orthogonality lemma for tensors shows that $\alpha\, b(x,u(x)) = 0$, which is contradictory.
This yields inequality \eqref{dimensioninequality}, and it ensues that
$$\dim \calV \leq \dim (\calV x \cap \{x\}^\bot)+\dim L_{\calV,x}+\dim (\calV \modu x).$$
Next, by the second orthogonality lemma for tensors, the space
$\calV x \cap \{x\}^\bot$ is $b_a$-orthogonal to $L_{\calV,x}$.
Denote by
$$\pi : \{x\}^\bot \rightarrow \{x\}^\bot/\F x$$
the canonical projection. Just like in Section \ref{proofalternatingSection},
we obtain that $\pi(\calV x \cap \{x\}^\bot)$ is $\overline{b}_a$-orthogonal to $\pi(L_{\calV,x})$ and
that
$$\dim \bigl(\pi(L_{\calV,x})\bigr)=\dim L_{\calV,x}-1 \quad \text{and} \quad \dim \bigl(\pi(\calV x \cap \{x\}^\bot)\bigr)=\dim (\calV x \cap \{x\}^\bot).$$
Therefore,
$$\dim \calV \leq \dim \bigl(\pi(\calV x \cap \{x\}^\bot)\bigr)+\dim (\pi(L_{\calV,x}))+1+\dim (\calV \modu x).$$
Now, by Lemma \ref{atransformlemma}, we have
$$\dim \bigl(\pi(\calV x \cap \{x\}^\bot)\bigr)+\dim (\pi(L_{\calV,x})) \leq (n-2)+1,$$
whereas, by induction, we have
$$\dim (\calV \modu x) \leq (\nu-1)\bigl((n-2)-(\nu-1)\bigr)=(\nu-1)(n-\nu-1).$$
Hence,
$$\dim \calV \leq (\nu-1)(n-\nu-1)+n=\nu(n-\nu)+1.$$
As we have assumed that $\dim \calV>\nu(n-\nu)$, the above sequence of inequalities leads to
$$\dim \bigl(\pi(\calV x \cap \{x\}^\bot)\bigr)+\dim (\pi(L_{\calV,x}))=(n-2)+1.$$
Using Lemma \ref{atransformlemma} once more, we deduce that $n-2=2(\nu-1)+1$ and
that $\pi(L_{\calV,x})$ includes $(\Ker \overline{Q})^\bot$. The first result yields $n=2\nu+1$.

Finally, let $y \in (\Ker Q)^\bot$. In particular $y \in \{x\}^\bot$.
For all $z' \in \Ker \overline{Q}$, we have $z'=\pi(z)$ for some $z \in \{x\}^\bot \cap \Ker Q$, and hence
$\overline{b}(\pi(y),z')=\overline{b}(\pi(y),\pi(z))=b(y,z)=0$. This shows that $\pi(y) \in (\Ker \overline{Q})^\bot$,
and hence $\pi(y) \in \pi(L_{\calV,x})$. Using $\F x \subset L_{\calV,x}$, we conclude that
$y \in L_{\calV,x}$. Thus, $x \wedge_b y \in \calV$ for all $x \in \Ker Q \setminus \{0\}$ and all $y \in (\Ker Q)^\bot$.
\end{proof}

By Lemma \ref{perfectlemma}, we see that no generality is lost in assuming that $\F$ is perfect, an assumption we will
make in the remainder of the proof.

By Lemma \ref{perfectoddlemma}, we can take a basis $\bfB=(e_1,\dots,e_n)$ of $V$ with respect to which the matrix of $b$ equals
$$S=I_1 \oplus A \quad \text{where} \quad A:=\begin{bmatrix}
0_\nu & I_\nu \\
I_\nu & 0_\nu
\end{bmatrix}.$$
Note that $A$ is alternating and non-singular.

Denote by $\calM$ the vector space of all matrices that represent the elements of $\calV$ with respect to the basis $\bfB$:
it is a nilpotent subspace of $\calS_S$.
One checks that the $S$-symmetric matrices are those of the form
$$M=\begin{bmatrix}
\alpha(M) & (AC(M))^T \\
C(M) & K(M)
\end{bmatrix}$$
where $\alpha(M) \in \F$, $C(M) \in \F^{n-1}$ and $K(M)$ is $A$-symmetric.

\begin{claim}\label{claim2}
The space $\calM$ contains the matrix $\begin{bmatrix}
0 & (AX)^T \\
X & 0_{n-1}
\end{bmatrix}$ for all $X \in \F^{n-1}$.
\end{claim}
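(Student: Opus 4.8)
The plan is to translate the second assertion of Claim \ref{claim1} into matrix language, after first locating $\Ker Q$ and $(\Ker Q)^\bot$ inside the basis $\bfB$.

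First I would determine $\Ker Q$ and $(\Ker Q)^\bot$. Since $S=I_1\oplus A$ and $A$ has zero diagonal, the only non-zero diagonal coefficient of $S$ is its $(1,1)$ entry; hence $Q\bigl(\sum_{k=1}^n\lambda_k\,e_k\bigr)=\lambda_1^2$, and therefore $\Ker Q=\Vect(e_2,\dots,e_n)$. As the first row of $S$ equals $\begin{bmatrix}1 & [0]_{1\times(n-1)}\end{bmatrix}$, we have $e_1\in(\Ker Q)^\bot$, and since $\dim(\Ker Q)^\bot=n-\dim\Ker Q=1$ we conclude that $(\Ker Q)^\bot=\F e_1$. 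Claim \ref{claim1} then guarantees that $v\wedge_b e_1\in\calV$ for every $v\in\Vect(e_2,\dots,e_n)$.

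Next I would write down the matrix of $v\wedge_b e_1$ in $\bfB$. Write $v=\sum_{j=2}^n X_{j-1}\,e_j$, so that the coordinate column of $v$ is $\begin{bmatrix}0\\ X\end{bmatrix}$ with $X:=(X_1,\dots,X_{n-1})^T\in\F^{n-1}$. The shape of $S$ shows that $b(e_1,-)$ is the first coordinate form, while for $z=\sum_{k=1}^n\mu_k\,e_k$ one has $b(v,z)=(AX)^T\begin{bmatrix}\mu_2\\ \vdots\\ \mu_n\end{bmatrix}$, using that $b(v,e_1)=0$ and that the lower-right block of $S$ is $A$. Recalling that in characteristic $2$ one has $v\wedge_b e_1=b(e_1,-)\otimes v+b(v,-)\otimes e_1$, I would then read off $(v\wedge_b e_1)(e_1)=v$ and $(v\wedge_b e_1)(e_k)=(AX)_{k-1}\,e_1$ for $k\in\lcro 2,n\rcro$, so that the matrix of $v\wedge_b e_1$ in $\bfB$ is exactly $\begin{bmatrix}0 & (AX)^T\\ X & 0_{n-1}\end{bmatrix}$. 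As $v$ ranges over $\Vect(e_2,\dots,e_n)$, the column $X$ ranges over all of $\F^{n-1}$, which is the assertion of the claim.

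I do not expect any genuine obstacle: once Claim \ref{claim1} is available, Claim \ref{claim2} is merely its coordinate form, and the only points requiring attention are the identification of $(\Ker Q)^\bot$ with $\F e_1$ (which uses that $\F$ is perfect, hence the normal form $S=I_1\oplus A$ supplied by Lemma \ref{perfectoddlemma}) and the innocuous sign in the definition of $x\wedge_b y$. A quick sanity check: $S\begin{bmatrix}0 & (AX)^T\\ X & 0_{n-1}\end{bmatrix}=\begin{bmatrix}0 & (AX)^T\\ AX & 0_{n-1}\end{bmatrix}$ is symmetric with vanishing diagonal, consistent with $v\wedge_b e_1$ being $b$-alternating.
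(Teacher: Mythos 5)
Your proposal is correct and follows essentially the same route as the paper: identify $\Ker Q=\Vect(e_2,\dots,e_n)$ and $(\Ker Q)^\bot=\F e_1$, invoke Claim \ref{claim1} to place $v\wedge_b e_1$ in $\calV$, and read off its matrix. The only cosmetic difference is that you compute the matrix of $v\wedge_b e_1$ fully explicitly (so surjectivity in $X$ is immediate), whereas the paper only pins down its block shape and then gets surjectivity from injectivity plus $\dim\Ker Q=n-1$; both are fine.
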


\begin{proof}
Note that $\Ker Q=\Vect(e_2,\dots,e_n)$ and $(\Ker Q)^\bot=\Vect(e_1)$.

Let $y \in \Vect(e_2,\dots,e_n)$. By Claim \ref{claim1} the operator $y \wedge_b e_1$ belongs to $\calV$.
Clearly, it maps $\{e_1\}^\bot$ into $\Vect(e_1)$, and it maps $e_1$ into $\Vect(e_2,\dots,e_n)$, and hence its matrix $N_y$ with respect to $\bfB$ has the form
$$N_y=\begin{bmatrix}
0 & (AC(N_y))^T \\
C(N_y) & 0_{n-1}
\end{bmatrix}.$$
It follows that the linear mapping $y \in \Ker Q \mapsto C(N_y) \in \F^{n-1}$ is injective, and as $\dim \Ker Q=n-1$ we deduce that it is
also surjective. The claimed statement follows.
\end{proof}

\begin{claim}\label{claim3}
For all $M \in \calM$, the matrix $K(M)$ is $A$-alternating and nilpotent, and $\alpha(M)=0$.
\end{claim}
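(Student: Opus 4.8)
The plan is to deduce Claim \ref{claim3} directly from Claim \ref{claim2} and Lemma \ref{extensionlemma}. Fix $M \in \calM$ and write $M = \begin{bmatrix} \alpha(M) & (AC(M))^T \\ C(M) & K(M) \end{bmatrix}$ as above, where $K(M)$ is $A$-symmetric. First I would use Claim \ref{claim2} to peel off the off-diagonal part: the matrix $N := \begin{bmatrix} 0 & (AC(M))^T \\ C(M) & 0_{n-1} \end{bmatrix}$ lies in $\calM$, hence so does $M - N = \begin{bmatrix} \alpha(M) & 0 \\ 0 & K(M) \end{bmatrix}$, and since $\calM$ is nilpotent this last matrix is nilpotent. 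A block-diagonal matrix is nilpotent exactly when each of its diagonal blocks is, so this immediately gives $\alpha(M) = 0$ (the $1$ by $1$ block $[\alpha(M)]$ must be nilpotent) and shows that $K(M)$ is nilpotent.

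Next I would check that $K(M)$ is $A$-alternating. Because $\alpha(M)=0$, the first step shows that $\begin{bmatrix} 0 & 0 \\ 0 & K(M) \end{bmatrix}$ belongs to $\calM$. Adding to it the matrix $\begin{bmatrix} 0 & (AX)^T \\ X & 0_{n-1} \end{bmatrix}$, which belongs to $\calM$ by Claim \ref{claim2}, I obtain that $\begin{bmatrix} 0 & (AX)^T \\ X & K(M) \end{bmatrix} \in \calM$ and is therefore nilpotent, for every $X \in \F^{n-1}$. I would then apply Lemma \ref{extensionlemma} with the invertible alternating matrix $A$ (of size $n-1$) and with the nilpotent $A$-symmetric matrix $K(M)$ in the role of ``$M$'': the matrix $\begin{bmatrix} 0 & (AX)^T \\ X & K(M) \end{bmatrix}$ is exactly the matrix $M_X$ of that lemma (note $X^TA^T=(AX)^T$), and we have just verified that all of these are nilpotent, which is condition (ii) of the lemma; hence condition (i) holds, i.e.\ $K(M)$ is $A$-alternating.

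I do not anticipate a real obstacle here: the argument is essentially bookkeeping around the two results already in hand. The only points requiring a little care are to subtract off \emph{both} the off-diagonal block and the $(1,1)$-entry of $M$ before invoking Lemma \ref{extensionlemma}, and to recognize the matrix $\begin{bmatrix} 0 & (AX)^T \\ X & K(M) \end{bmatrix}$ built from elements of $\calM$ as the matrix $M_X$ of that lemma. All the genuine content — that nilpotency of every $M_X$ forces $A$-alternation — is already packaged inside Lemma \ref{extensionlemma}.
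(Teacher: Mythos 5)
Your proposal is correct and follows essentially the same route as the paper: subtract the rank-two "tensor" part supplied by Claim \ref{claim2} to isolate the block-diagonal matrix $\Diag(\alpha(M),K(M))$ in $\calM$ (giving $\alpha(M)=0$ and nilpotency of $K(M)$), then add back arbitrary off-diagonal blocks and invoke the implication (ii)$\Rightarrow$(i) of Lemma \ref{extensionlemma} to conclude that $K(M)$ is $A$-alternating. No gaps.
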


\begin{proof}
Let $M \in \calM$. Set $X:=C(M)$. Claim \ref{claim2} shows that $M-\begin{bmatrix}
0 & (AX)^T \\
X & 0_{n-1}
\end{bmatrix}$ belongs to $\calM$, and hence
$$\begin{bmatrix}
\alpha(M) & [0]_{1 \times (n-1)} \\
[0]_{(n-1) \times 1} & K(M)
\end{bmatrix} \in \calM.$$
In particular, this matrix is nilpotent, which yields that $\alpha(M)=0$ and that $K(M)$ is nilpotent.
Using Claim \ref{claim2} once more, we find that, for every $X \in \F^{n-1}$, the matrix
$$\begin{bmatrix}
0 & (AX)^T \\
X & K(M)
\end{bmatrix}$$
belongs to $\calM$ and is therefore nilpotent. As we know that $K(M)$ is $A$-symmetric, Lemma \ref{extensionlemma} yields that $K(M)$ is $A$-alternating.
\end{proof}

We are now ready to conclude.
By the previous claim, every matrix $M$ of $\calM$ has the form
$$M=\begin{bmatrix}
0 & (A C(M))^T \\
C(M) & K(M)
\end{bmatrix}$$
and $K(\calM)$ is a nilpotent linear subspace of $\calA_A$. Here, the Witt index of $A$ equals $\nu$.
Thus point (b) of Theorem \ref{maintheo} shows that $\dim K(\calM) \leq \nu (\nu-1)$. As the linear mapping
$M \in \calM \mapsto (C(M),K(M)) \in \F^{n-1} \times K(\calM)$ is injective, we obtain
$$\dim \calV=\dim \calM \leq (n-1)+\dim K(\calV)=2\nu+\nu^2-\nu=\nu(n-\nu).$$
This actually contradicts one of our starting assumptions! Hence, we conclude that
$$\dim \calV \leq \nu(n-\nu).$$
This completes our inductive proof of point (a) of Theorem \ref{maintheo}.

\section*{Acknowledgements}

The author is profoundly grateful to Rachel Quinlan, who provided him with the example of Section \ref{specialexamplesection},
who proof-read an early version of the manuscript, and whose many helpful remarks contributed greatly to the quality of the present article.

\end{document}